\newtheorem{fed}{Definition}[section]
\newtheorem{teo}[fed]{Theorem}
\newtheorem*{teo*}{Theorem}
\newtheorem{lem}[fed]{Lemma}
\newtheorem{cor}[fed]{Corollary}
\newtheorem{pro}[fed]{Proposition}
\theoremstyle{definition}
\newtheorem{rem}[fed]{Remark}
\newtheorem{conj}[fed]{Conjecture}
\newtheorem{exa}[fed]{Example}
\newtheorem{num}[fed]{}
\def\ID{Let $(\cF_0 \coma \ca)$ be initial data
for the CP} 
\def\ga{\gamma}
\def\n0{n_{ \text{\rm \tiny o}}}
\newcommand{\IN}[1]{\mathbb {I} _{#1}}
\def\In{\mathbb {I} _n}
\def\IM{\mathbb {I} _m}
\def\suml{\sum\limits}
\def\bce{\begin{center}}
\def\ece{\end{center}}
\newcommand{\trivial}{\{0\}}
\DeclareMathOperator{\FP}{FP\,}
\def\subim{_{i\in \IN{n}}\,}
\def\py{\peso{and}}
\def\rk{\text{\rm rk}}
\def\noi{\noindent}
\def\cF{\mathcal F}
\def\cG{\mathcal G}
\def\QED{\hfill $\square$}
\def\EOE{\hfill $\triangle$}
\def\EOEP{\tag*{\EOE}}
\newcommand{\peso}[1]{ \quad \text{ #1 } \quad }
\def\uno{\mathds{1}}
\def\bm{\left[\begin{array}}
\def\em{\end{array}\right]}
\def\ben{\begin{enumerate}}
\def\een{\end{enumerate}}
\def\bit{\begin{itemize}}
\def\eit{\end{itemize}}
\def\barr{\begin{array}}
\def\earr{\end{array}}
\def\igdef{\ \stackrel{\mbox{\tiny{def}}}{=}\ }
\def\k{n}
\def\eps{\varepsilon}
\def\la{\lambda}
\def\al{\alpha}
\def\N{\mathbb{N}}
\def\R{\mathbb{R}}
\def\C{\mathbb{C}}
\def\cC{\mathcal{C}}
\def\cH{\mathcal{H}}
\def\cK{\mathcal{K}}
\def\cR{{\cal R}}
\def\cS{{\cal S}}
\def\cM{{\cal M}}
\def\cB{{\cal B}}
\def\cV{{\cal F}}
\def\cU{{\cal U}}
\def\cW{{\cal G}}
\def\ca{\mathbf{a}}
\def\cb{\mathbf{a}}
\def\inc{\subseteq}
\def\sii{ if and only if }
\def\inv{^{-1}}
\def\api{\langle}
\def\cpi{\rangle}
\def\efin{E(\cF_0 \coma \cb)}
\def\efinb{E(\cF_0 \coma \cb)}
\def\poRt{L_d(\cR)^+_\tau}
\def\poRta{L(\cR)^+_\tau}
\def\stau{S_\tau}
\def\nfin{N(\cF_0 \coma \cb)}
\def\ua{^\uparrow}
\def\da{^\downarrow}
\def\caop{\cC_\ca^{\rm op}(\cF_0)}
\def\s{\mathfrak{s}}
 \DeclareMathOperator{\tr}{tr}
\DeclareMathOperator{\gen}{span}
\DeclareMathOperator{\leqp}{\leqslant}
\def\RS{\mathbf{F} }
\def\RSV{\cF= \{f_i\}_{i\in \, \IN{n}}}
\def\coma{\, , \, }
\newcommand{\hil}{\mathcal{H}}
\newcommand{\op}{L(\mathcal{H})}
\newcommand{\lhk}{L(\mathcal{H} \coma \mathcal{K})}
\newcommand{\lkh}{L(\mathcal{K} \coma \mathcal{H})}
\newcommand{\posop}{L(\mathcal{H})^+}
\def\H{{\cal H}}
\def\glh{\mathcal{G}\textit{l}\,(\cH)}
\newcommand{\mat}{\mathcal{M}_d(\mathbb{C})}
\newcommand{\matsa}{\mathcal{H}(n)}
\newcommand{\matsad}{\mathcal{H}(d)}
\newcommand{\matu}{\mathcal{U}(n)}
\newcommand{\matud}{\mathcal{U}(d)}
\newcommand{\matpos}{\mat^+}
\newcommand{\matinvd}{\mathcal{G}\textit{l}\,(d)}
\def\gld{\matinvd^+}
\newcommand{\matrec}[1]{\mathcal{M}_{#1} (\mathbb{C})}
\def\beq{\begin{equation}}
\def\eeq{\end{equation}}
\def\pausa{\medskip\noi}
\begin{document}

\title{ {\bf Optimal completions of a frame.}}
\author{Pedro G. Massey, Mariano A. Ruiz  and Demetrio Stojanoff\thanks{Partially supported by CONICET 
(PIP 5272/05) and  Universidad Nacional de La PLata (UNLP 11 X472).} }
\author{P. G. Massey, M. A. Ruiz and D. Stojanoff \\ {\small Depto. de Matem\'atica, FCE-UNLP,  La Plata, Argentina
and IAM-CONICET  \footnote{e-mail addresses: massey@mate.unlp.edu.ar , mruiz@mate.unlp.edu.ar , demetrio@mate.unlp.edu.ar}
}}
\date{}
\maketitle

\begin{abstract}  Given a finite sequence of vectors $\mathcal F_0$ in $\C^d$ we describe the spectral and geometrical structure of optimal completions of $\mathcal F_0$ obtained by adding a finite sequence of vectors with prescribed norms, where optimality is measured with respect to a general convex potential. In particular, our analysis includes the so-called Mean Square Error (MSE) and the Benedetto-Fickus' frame potential. On a first step, we reduce the problem of finding the optimal completions to the computation of the minimum of a convex function in a convex compact polytope in $\R^d$. As a second step, we show that there exists a finite set (that can be explicitly computed in terms of a finite step algorithm that depends on $\cF_0$ and the sequence of prescribed norms) such that the optimal frame completions with respect to a given convex potential can be described in terms of a distinguished element of this set. As a byproduct we characterize the cases of equality
in Lindskii's inequality from matrix theory. 
\end{abstract}

\noindent  AMS subject classification: 42C15, 15A60.

\noindent Keywords: frames, frame completions, majorization, Lindskii's inequality,
Schur-Horn theorem.

\date{}

\tableofcontents

\section{Introduction} 

A finite sequence of vectors $\cF=\{f_i\}_{i\in \IN{n}}$ in $\C^d$ is a frame for $\C^d$ if the sequence spans $\C^d$. It is well known that finite frames provide redundant linear encoding-decoding schemes, that have proved useful in real life applications. Conversely, several research problems in this field have arise in the attempt to apply this theory in different contexts.

 For example, the (linear) redundancy provided by finite frames translates into robustness properties of the transmission scheme that they induce, which make frames a useful device for transmission of signals through noisy channels; this last fact has posed several problems dealing with the determination of what is known in the literature as optimal frames for erasures (see \cite{Bod,Pau,BodPau,caskov,HolPau,LoHanagre,LeHanagre}). 
 
 On the other hand, the so-called tight frames allow for redundant linear representations of vectors (signals) that are formally analogous to the linear representations given by orthonormal basis; this feature makes tight frames a distinguished class of frames that is of interest for applications. Conversely, in several applications we would like to consider tight frames that have some other prescribed properties leading to what is known in the literature as  frame design problems \cite{Illi,CFMP,Casagregado,CMTL,ID,DFKLOW,FWW,KLagregado}. It is worth pointing out that in some cases it is not possible to find a frame fulfilling the previous demands; in \cite{BF}
Benedetto and Fickus found an alternative approach to these situations 
by introducing a functional, called the frame potential, and showing that minimizers of the frame potential (within a convenient set of frames) are the natural substitutes of tight frames with prescribed parameters (see also \cite{Phys,FJKO,JOk,MR} and \cite{casafick3,MRS,MRS2} for related problems in the context of fusion frames).

Recently, the following frame completion problem, related with the frame design problems mentioned above, was posed in \cite{FicMixPot}: given
an initial sequence $\cF_0$ in $\C^d$ and a sequence of positive numbers $\cb$ then compute the sequences $\cG $ in $\C^d$ whose elements have norms given by the sequence $\cb$ and such that 
the completed sequence $\cF=(\cF_0 \coma \cG )$ minimizes the so-called mean square error (MSE) of $\cF$, which is a (convex) functional (see also \cite{CCHK, FWW,MR0} for completion problems for  frames). The initial sequence of vectors can be considered as a checking device for the measurement, and therefore we search for a complementary set of measurements (given by vectors with prescribed norms) in such a way that the complete set of measurements is optimal with respect to the MSE. Notice there are other possible (convex) functionals that we could choose to minimize such as, for example, the frame potential. Therefore, a natural extension of the previous problem is: given a (convex) functional defined on the set of frames, compute the frame completions with prescribed norms that minimize this functional. 

A first step towards the solution of this general version 
of the completion problem was made in \cite{MRS4}. There we showed 
that under certain hypothesis (feasible cases, see Section \ref{problemon}), optimal frame completions 
with prescribed norms do not depend on the particular choice of convex 
functional, as long as we consider 
{\it convex potentials}, that contain the MSE and the frame potential (see Section \ref{basic}). 
On the other hand, it is easy to show examples in which the previous 
result does not apply (non-feasible cases); in these cases the optimal 
frame completions with prescribed norms are not known even for the 
MSE nor the frame potential.

In this paper we consider the frame completion problem of an initial 
sequence $\cF_0$ in $\C^d$, for general sequences $\cb$ of prescribed norms 
and for a fixed convex potential $P_f$ - where $f$ is a strictly convex function - 
in the non-feasible cases (see Section \ref{problemon} for motivations 
and a detailed description of our main problem). In order to deal with the 
general problem we introduce and develop a class of pairs of positive 
matrices (called optimal matchings matrices, see the Appendix) that allow 
to reduce the problem to the computation of minimizers of a scalar convex function $F$ 
(associated to $f$) in a compact convex domain in $\R^d$ (the same 
set for every map $f$). This constitutes a reduction of the optimization 
problem, that in turn can be attacked with several numerical tools in concrete examples.
In fact, the convex domain has a natural and explicit description in terms 
of majorization, which is an algorithmic notion.

We also study the spectral and geometrical structure of local minimizers of 
$P_f$ in the set of frame completions with prescribed norms, in terms of a 
geometrical approach to a perturbation problem. These last results allow to 
a second reduction of the problem: there is a finite set 
$\efinb $ in $\R^d$ - that depends only on the initial family $\cF_0$ 
and the finite sequence $\cb$ of positive numbers - such that for any fixed 
convex potential $P_f$ there exists a unique 
vector $\mu = \mu_f \in  \efinb $ (computable by a minimization on the finite set $\efinb$ 
in terms of $F$) such that all optimal frame completions for $P_f$ with 
prescribed norms can be computed in terms of $\mu$.

In both methods, we describe the optimal vector of eigenvalues for the 
frame operator of the completing sequences. With this data, the optimal 
completions (which satisfy the norm restrictions) can be effectively computed by using a well known 
algorithm developed in \cite{ID} that implements  the Schur-Horn theorem.

 In all  examples that we have computed numerically, we have found that the optimal spectrum of the completing sequences does not depend on the particular choice of convex potential $P_f$ considered. Although at the present we have not been able to prove this fact, we state it as a conjecture. We have also observed two other common features of optimal solutions - that are also stated as conjectures - that allow to implement an efficient (and considerably faster) algorithm that computes an smaller set than $\efinb$ that also enables to compute the optimal frame completions with prescribed norms with respect to a general convex potential $P_f$.

The paper is organized as follows. In Section \ref{Pre} we state several facts and notions about frame theory in finite dimension and majorization, which is a notion from matrix theory; in this section we describe in detail the main problem of the present paper and some previous related results. In section \ref{sec3} we reduce the problem of computing optimal frame completions with prescribed norms to a set of completions whose frame operators are optimal matchings of the frame operator of the initial set of vectors $\cF_0$, in the sense described in the Section \ref{secAppendix} (Appendix). 
Based on the results of the Appendix we obtain a first reduction of the problem and show that the optimal frame completions with prescribed norms for the convex potential $P_f$ can be described in terms of the minimizers of an associated function $F$ in a compact convex polytope. We also show that the spectral structure of optimal completions is unique and has some other features. In Section \ref{Sec4} we introduce two different topologies in the set of completions and consider the geometrical structure of local minimizers with respect to these topologies; in order to do this we apply tools from differential geometry that allow to solve a local perturbation problem for frames with prescribed norms. 
Using these results we show in Section \ref{sec5} that optimal completions $\cF=(\cF_0,\cG)$ have the property that the vectors of the completing sequence $\cG$ are eigenvectors of the frame operator $S_\cF$ of the complete sequence $\cF$. Based on this last fact we develop an algorithm (that can be effectively implemented) to compute optimal completions numerically. The analysis of the computed examples reflects some commons features of the numerical solutions. Based on these facts we state some other conjectures related with the spectral structure of optimal completions. Finally, in Section \ref{secAppendix} we introduce pairs of positive matrices, that we call optimal matchings, and describe the structure of these pairs; this corresponds to the study of the case of equality in Lindskii's inequality from matrix theory.

\section{Preliminaries}\label{Pre}

In this section we describe the basic notions that we shall consider throughout the paper. We first establish the general notations and then we recall the basic facts from frame theory that are related with our main results. Then, we  describe submajorization which is a notion from matrix analysis, that will play a major role in this note. Finally, we recall the solution of the frame design problem in terms of majorization and give a detailed description of the optimal frame completion problem, which is the main topic of this paper. 

\subsection{General notations.}
Given $m \in \N$ we denote by $\IM = \{1, \dots , m\} \inc \N$ and 
$\uno = \uno_m  \in \R^m$ denotes the vector with all its entries equal to $1$. 
For a vector $x\in \R^m$ we denote by $x^\downarrow$ (resp. $x^\uparrow$) the rearrangement
of $x$ in  decreasing (resp. increasing) order, and $(\R^m)^\downarrow = \{ x\in \R^m : x = x^\downarrow\}$ the set of downwards ordered vectors. 

\pausa
Given $\cH \cong \C^d$  and $\cK \cong \C^n$, we denote by $\lhk $ 
the space of linear operators $T : \cH \to \cK$. 
Given an operator $T \in \lhk$, $R(T) \inc \cK$ denotes the
image of $T$, $\ker T\inc \cH$ the null space of $T$ and $T^*\in \lkh$ 
the adjoint of $T$. 
If $\cK = \cH$ we denote by $\op = L(\cH \coma \cH)$, 
by $\glh$ the group of all invertible operators in $\op$, 
 by $\posop $ the cone of positive operators and by
$\glh^+ = \glh \cap \posop$. 
If $T\in \op$, we  denote by   
$\sigma (T)$ the spectrum of $T$, by $\rk\, T= \dim R(T) $  the rank of $T$,
and by $\tr T$ the trace of $T$. 

\pausa 
By fixing orthonormal basis's (ONB's) 
of the Hilbert spaces involved, we shall identify operators with 
matrices, using the following notations: 
by $\matrec{n,d} \cong L(\C^d \coma \C^n)$ we denote the space of complex $n\times d$ matrices. 
If $n=d$ we write $\mat = \matrec{d,d}$ ;   
$\matsad$ is the $\R$-subspace of selfadjoint matrices,  
$\matinvd$ the group of all invertible elements of $\mat$, $\matud$ the group 
of unitary matrices, 
$\matpos$ the set of positive semidefinite
matrices, and $\matinvd^+ = \matpos \cap \matinvd$.

\pausa
If $W\inc \cH$ is a subspace we denote by $P_W \in \posop$ the orthogonal 
projection onto $W$, i.e. $R(P_W) = W$ and $\ker \, P_W = W^\perp$. 
Given $x\coma y \in \cH$ we denote by $x\otimes y \in \op$ the rank one 
operator given by 
$x\otimes y \, (z) = \api z\coma y\cpi \, x$ for every $z\in \cH$. Note that
if $\|x\|=1$ then $x\otimes x = P_{\gen\{x\}}\,$.

\pausa
Given $S\in \matpos$, we write $\la(S) \in 
(\R_{\geq 0}^d)^\downarrow$ the 
vector of eigenvalues of $S$ - counting multiplicities - arranged in decreasing order. 
If $ \lambda(S)=\la= (\lambda_1 \coma \ldots \coma \lambda_d) \in (\R_{\geq 0}^d)^\downarrow\,$, a system 
$\cB=\{h_i\}_{i\in \IN{d}} \inc \C^d$ is a 
``ONB of eigenvectors for $S\coma \la \,$" if it is an  
orthonormal basis for $\C^d$ such that 
$S\,h_i=\lambda_i\,h_i$ for every $i\in \IN{d}\,$. 
In other words, an  orthonormal basis 
\beq\label{BON S la}
\mbox{$\cB=\{h_i\}_{i\in \IN{d}} $ \ \ \ is a 
``ONB of eigenvectors for $S\coma \la \,$"} \iff 
S = \sum_{i\in \IN{d}} \, \la_i \cdot h_i \otimes h_i \ .
\eeq
For vectors in $\C^d$ we shall use the euclidean norm.  
On the other hand,  for  $T\in \matrec{n\coma d}$ we shall use the spectral norm, denoted $\|T\|$,  given by $\|T\| =  \max\limits_{\|x\|=1}\|Tx\|$.

\subsection{Basic framework of finite frames }\label{basic}

In what follows we consider $(\k,d)$-frames. See 
\cite{BF,TaF,Chr,HLmem,MR} for detailed expositions of several aspects of this notion. 

\pausa
Let $d, n \in \N$, with $d\le n$. Fix a Hilbert space $\hil\cong \C^d$. 
A family $ \RSV \in  \cH^n $  is an 
$(\k,d)$-frame for $\cH$  if there exist constants $A,B>0$ such that
\beq\label{frame defi} A\|x\|^2\leq \sum_{i=1}^n |\left \langle x \, , f_i\right \rangle|^2\leq B \|x\|^2 \peso{for every} x\in \hil \ .
\eeq
The {\bf frame bounds}, denoted by  $A_\cF, B_\cF$ are the optimal constants in \eqref{frame defi}. If $A_\cF=B_\cF$ we call $\cF$ a tight frame.
Since $\dim \hil<\infty$, a family  $\RSV$  is an 
$(\k,d)$-frame 
 \sii $\gen\{f_i: i \in \In \} = \cH$.  
We shall denote by $\RS = \RS(n \coma d)$ the set of all $(\k,d)$-frames for $\cH$. 

\pausa
Given $\RSV \in  \cH^n $, the operator $T_\cV \in L(\hil\coma\C^n)$ defined
by 
 \beq \ T_\cV\, x= \big( \,\api x \coma f_i\cpi\,\big)  \subim 
\ , \peso{for every} x\in \cH \,
\eeq
is the {\bf analysis} operator of $\cF$.  Its adjoint $T_\cV^*$ is called the {\bf synthesis} operator: 
$$
T_\cV^* \in L(\C^n\coma \cH)  \peso{given by}
T_\cV ^* \, v =\sum_{i\in \, \IN{m}} v_i\, f_i \peso{for every}
v = (v_1\coma \dots\coma v_n)\in \C^n \ . 
$$
Finally, we define the {\bf frame operator} of $\cV$ as 
$$
S_\cV = T_\cV^*\  T_\cV = \sum_{i \in \In} f_i \otimes f_i 
\in \posop\ . 
$$
Notice that, if 
$\RSV \in  \cH^n $ then  
$\api S_\cV \, x\coma x\cpi \, = \sum\subim \, 
 \big|\, \api x \coma f_i\cpi \, \big|^2$ for every 
 $x\in \cH$. Hence, $\cF\in \RS(n \coma d)$ if and only if 
$S_\cF\in \glh^+$ and in this case
$A_\cV \, \|x\|^2 \, \le \, \api S_\cV \, x\coma x\cpi 
 \,  \le \, B_\cV \, \|x\|^2$ for every $ x\in \cH $.   
In particular, $A_\cV   =\la_{\min} (S_\cV) = \|S_\cV\inv \| \inv$ and $ 
\la_{\max} (S_\cV) = \|S_\cV \| = B_\cV \,$.
Moreover, $\cV $ is tight if and only if $S_\cV = 
\frac{\tau}{d}  \, I_\H\,$, where $\tau = 
\tr S_\cV = \sum\subim \|f_i\|^2 \,$.

\pausa
The frame operator plays an important role in the reconstruction of a vector $x$ using its frame coefficients 
$\{\api  x\coma f_i\cpi \,\}_{i\in \IN{n}}$. This leads to the definition of the canonical dual frame associated to $\cF$:
for every $\RSV\in \RS(n \coma d)$, the {\bf canonical dual} frame associated to $\cV$ is the sequence 
$\cV^\#\in \RS$ defined by
$$
\cV^\# \igdef S_\cV ^{-1} \cdot \cV  =  \{S_\cV ^{-1} \,f_i\,\}_{i\in \, \IN{m}} \in \RS(n \coma d) \ .
$$
Therefore, we obtain the reconstruction formulas
\begin{equation}\label{ec recons}
x= \sum\subim  \api  x \coma f_i\cpi \, S_\cV^{-1} \,f_i 
= \sum\subim  \api  x \coma S_\cV^{-1} \, f_i\cpi \, f_i  \peso{for every} x\in \cH \ .
\end{equation} 
Observe that the canonical dual $\cV^\#$ satisfies that given $x\in \cH$, then 
\beq\label{SMP}
T_{\cV^\#}\, x= \big( \,\api x \coma  S_\cV\inv \, f_i\cpi\,\big)  \subim 
= \big( \,\api S_\cV\inv \, x \coma   f_i\cpi\,\big)  \subim 
\peso{for} x\in \cH 
\implies 
T_{\cV^\#}= T_\cV \, S_\cV\inv \ .
\eeq 
Hence $T_{\cV^\#}^* \, T_\cV = 
I_\cH $ and $S_{\cV^\#} =  S_{\cV}^{-1}\,T_\cV^*\  T_\cV\,S_{\cV}^{-1} = S_{\cV}^{-1}\,$.  

\pausa 
In their seminal work \cite{BF}, Benedetto and Fickus introduced a functional defined (on unit norm frames), the so-called frame potential, given by $$ \FP(\{f_i\}_{i\in \In} )=\sum_{i,\,j\,\in \In}|
\api f_i\coma f_j \cpi |\,^2\ .$$ One of their major results shows that tight unit norm frames - which form an important class of frames because of their simple reconstruction formulas - can be characterized as (local) minimizers of this functional among unit norm frames. Since then, there has been interest in (local) minimizers of the frame potential within certain classes of frames, since such minimizers can be considered as natural substitutes of tight frames (see \cite{Phys,MR,MRS}). Notice that, given $\cF=\{f_i\}_{i\in \IN{n}}\in  \cH^n$ then $\FP(\cF)=\tr \, S_\cF^2 
=\sum_{i\in \IN{d}}\lambda_i(S_\cF)^2$.
These remarks have motivated 
the definition of general convex potentials as follows:

\begin{fed}\label{pot generales}\rm
Let $f:[0 \coma \infty)\rightarrow [0 \coma \infty)$ be a 
convex function. Following \cite{MR} we consider the 
(generalized) convex potential associated to $f$, denoted $P_f$, given by
\begin{equation}
P_f(\cF)=\tr \, f(S_\cF)  \peso {for} 
\cF=\{f_i\}_{i\in \IN{n}}\in  \cH^n \ .\EOEP
\end{equation}  
\end{fed}

\pausa
Of course, one of the most important convex potential is the Benedetto-Fickus' (BF) frame potential. As shown in \cite[Sec. 4]{MR} these convex functionals (which are related with the so-called entropic measures of frames) share many properties with the BF-frame potential. Indeed, under certain restrictions both the spectral and geometric structures of minimizers of these potentials coincide (see \cite{MR}).

\begin{rem} 
The results that we shall develop in this work apply in the case of convex potentials $P_f$ for a strictly convex function $f:[0,\infty)\rightarrow \R$.
Notice that this formulation does not formally include the Mean Square Error (MSE), which is the convex potential associated with the strictly convex function
$f:(0,\infty)\rightarrow (0,\infty)$ given by $f(x)= x\inv$, since $f$ is not defined in $0$ in this case. In order to include the MSE within our results we proceed as follows: 
we define $\tilde f:[0,\infty)\rightarrow (0,\infty]$ given by $\tilde f(x)=x^{-1}$ for $x>0$ and $\tilde f(0)=\infty$. 
Assuming that  $ x<\infty $ and $x+\infty =x\cdot \infty =\infty$ for 
every $x\in (0\coma \infty)$, it turns out that the new map $\tilde f$ is a (extended) strictly 
convex function and  all the results obtained in this paper apply to the convex 
potential induced by $\tilde f$.\EOE
\end{rem}

\subsection{Submajorization} \label{subsec 2.2}

Next we briefly describe submajorization, a notion from matrix analysis theory that will be used throughout the paper. For a detailed exposition of submajorization see \cite{Bat}.

\pausa 
 Given $x,\,y\in \R^d$ we say that $x$ is
{\bf submajorized} by $y$, and write $x\prec_w y$,  if
$$
\suml_{i=1}^k x^\downarrow _i\leq \suml_{i=1}^k y^\downarrow _i \peso{for every} k\in \mathbb I_d \ .
$$  
If $x\prec_w y$ and $\tr x = \sum_{i=1}^dx_i=\sum_{i=1}^d y_i = \tr y$,  then we say that $x$ is
{\bf majorized} by $y$, and write $x\prec y$. 
If the two vectors $x$ and $y$ have different size, we write 
$x\prec y$ if the extended vectors (completing with zeros 
to have the same size) satisfy the previous relationship.  

\pausa
On the other hand we write 
$x \leqp y$ if $x_i \le y_i$ for every $i\in \mathbb I_d \,$.  It is a standard  exercise 
to show that $x\leqp y \implies x^\downarrow\leqp y^\downarrow  \implies x\prec_w y $. 
Majorization is usually considered because of its relation with tracial inequalities 
for convex functions. 
Indeed, given $x,\,y\in \R^d$ and  $f:I\rightarrow \R$ a 
convex function defined on an interval $I\inc \R$ such that 
$x,\,y\in I^d$,  then (see for example \cite{Bat}): 
\ben 
\item If one assumes that $x\prec y$, then 
$ 
\tr f(x) \igdef\suml_{i=1}^df(x_i)\leq \suml_{i=1}^df(y_i)=\tr f(y)\ .
$
\item If only $x\prec_w y$,  but the map $f$ is also increasing, then  still 
$\tr f(x) \le \tr f(y)$. 
\item If $x\prec_w y$ and $f$ is a strictly convex function such 
that $\tr \,f(x) =\tr \, f(y)$ then there exists a permutation $\sigma$ 
of $\IN{d}$ such that $y_i=x_{\sigma(i)}$ for $i\in \IN{d}\,$. 
\een

\begin{rem}\label{rem doublystohasctic}
Majorization between vectors in $\R^d$ is intimately related with the 
class of doubly stochastic $d\times d$ matrices, denoted by DS$(d)$. Recall that a 
$d\times d$ matrix $D \in $ DS$(d)$   if it has non-negative entries and each row sum and column sum equals 1. 

\pausa
It is well known (see \cite{Bat}) that given $x\coma y\in \R^d$ then $x\prec y$ 
if and only if there exists $D\in$ DS$(d)$ such that $D\, y=x$. As a consequence 
of this fact we see that if $x_1\coma y_1\in \R^r$ and $x_2 \coma y_2\in \R^s$ 
are such that $x_i\prec y_i\, $, $i=1\coma 2$, 
then $x=(x_1\coma x_2)\prec y=(y_1\coma y_2)$ in $\R^{r+s}$.

\pausa
Indeed, if $D_1$ and $D_2$ are the doubly stochastic matrices corresponding the previous majorization relations then $D=D_1\oplus D_2\in$ DS$(r+s)$ is such that $D\, y=x$. 
\EOE
\end{rem}

\pausa
Submajorization can be extended to the context of self-adjoint matrices 
as follows: given $S_1\coma S_2\in \mathcal H(d)$ we say that $S_1$ 
is {\bf submajorized} by $S_2\, $, denoted $S_1\prec_w S_2\, $,  if 
$\lambda(S_1)\prec_w \lambda(S_2)\, $. If $S_1\prec_w S_2$ 
and $\tr \,  S_1=\tr \, S_2$ we say that $S_1$ is {\bf majorized} by $S_2$ and 
write $S_1\prec S_2\, $. Thus, $S_1\prec S_2$ if and only if 
$\lambda(S_1)\prec \lambda(S_2)$. Notice that (sub)majorization is 
a spectral relation between self-adjoint operators.

\pausa
We end this section by recalling the following result, known as Lindskii's inequality (see \cite[III.4]{Bat}). 
\begin{teo}[Lindskii's inequality ]\label{LindsTeo} 
Let $A,\,B\in \matsad$. Then $\lambda(A)+\lambda^\uparrow(B)\prec \lambda(A+B)$.
\QED
\end{teo}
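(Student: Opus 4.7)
My strategy is to apply Ky Fan's maximum principle together with an inversion trick $Y\mapsto -Y$, then upgrade to the full majorization via the Lidskii--Wielandt subset refinement.

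\textbf{Step 1 (Ky Fan subadditivity).} For Hermitian $X\in\matsad$, Ky Fan's principle gives
$$\suml_{i=1}^k\lambda_i(X)=\max\{\tr(XP)\,:\,P^2=P=P^*,\ \tr P=k\}.$$
Choosing the projection $P_0$ optimal for $X+Y$ and splitting the trace,
$$\suml_{i=1}^k\lambda_i(X+Y)=\tr(XP_0)+\tr(YP_0)\leq\suml_{i=1}^k\lambda_i(X)+\suml_{i=1}^k\lambda_i(Y),\qquad k=1,\ldots,d.$$
Since the RHS is decreasing in $i$ (being a sum of two decreasing sequences), this is the majorization $\lambda(X+Y)\prec\lambda(X)+\lambda(Y)$.

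\textbf{Step 2 (Inversion trick).} Substituting $X=A+B$ and $Y=-B$, and observing that $\lambda_i(-B)=-\lambda_{d-i+1}(B)=-\lambda^\uparrow_i(B)$, Step 1 becomes
$$\suml_{i=1}^k\lambda_i(A)\leq\suml_{i=1}^k\lambda_i(A+B)-\suml_{i=1}^k\lambda^\uparrow_i(B),$$
equivalently
$$\suml_{i=1}^k\big[\lambda_i(A)+\lambda^\uparrow_i(B)\big]\leq\suml_{i=1}^k\lambda_i(A+B),\qquad k=1,\ldots,d.$$
This gives the ``prefix'' form of Lidskii in the natural coordinate order.

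\textbf{Step 3 (Subset refinement).} Setting $\mu:=\lambda(A)+\lambda^\uparrow(B)$, note that $\mu$ need not be sorted decreasingly, so Step 2 only compares prefix sums of $\mu$ in its natural order with prefix sums of $\lambda(A+B)$, which is strictly weaker than $\mu\prec\lambda(A+B)$. The required strengthening is
$$\suml_{i\in I}\big[\lambda_i(A)+\lambda_{d-i+1}(B)\big]\leq\suml_{i=1}^k\lambda_i(A+B)\quad\text{for every }I\inc\IN{d}\text{ with }|I|=k,$$
which is the Lidskii--Wielandt inequality stated in \cite[III.4]{Bat}. It can be obtained by applying the same Ky Fan-type argument to the flag of eigenspaces of $A+B$ indexed by $I$, combined with Wielandt's min-max formula for sums of non-consecutive eigenvalues
$$\suml_{j=1}^k\lambda_{i_j}(X)=\max_{V_1\inc\cdots\inc V_k,\ \dim V_j=i_j}\;\min_{v_j\in V_j\text{ orthon.}}\suml_{j=1}^k\langle Xv_j,v_j\rangle.$$

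\textbf{Step 4 (Trace equality).} The identity $\tr(A+B)=\tr A+\tr B=\suml_i\mu_i$ promotes the weak majorization obtained in Step 3 to the full majorization $\mu\prec\lambda(A+B)$.

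The crux of the plan is Step 3: the direct inversion trick of Step 2 only yields the natural-order prefix inequality, and handling the rearrangement of $\mu$ into decreasing order requires the Lidskii--Wielandt subset form, which in turn needs the Wielandt min-max characterization (or equivalently an exterior-algebra argument on $\Lambda^k\C^d$).
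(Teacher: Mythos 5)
The paper cites \cite[III.4]{Bat} for this theorem and records no proof of its own, so there is no internal argument to compare against; what follows evaluates your plan on its own terms.

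Steps 1, 2 and 4 are correct and standard: Ky Fan subadditivity gives $\lambda(X+Y)\prec_w\lambda(X)+\lambda(Y)$, the substitution $X=A+B$, $Y=-B$ turns it into the prefix inequality $\sum_{i=1}^k\bigl(\lambda_i(A)+\lambda_i^\uparrow(B)\bigr)\leq\sum_{i=1}^k\lambda_i(A+B)$ for all $k\in\IN{d}$, and the trace identity supplies the final equality. You also correctly observe that this prefix inequality is weaker than majorization, since $\mu:=\lambda(A)+\lambda^\uparrow(B)$ is in general not decreasingly ordered.

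The genuine gap is in Step 3. The subset inequality you write, $\sum_{i\in I}\bigl[\lambda_i(A)+\lambda_{d-i+1}(B)\bigr]\leq\sum_{i=1}^k\lambda_i(A+B)$ for every $k$-subset $I\inc\IN{d}$, is not the Wielandt inequality of \cite[III.4]{Bat}: after maximizing over $I$ it is exactly the weak majorization $\mu\prec_w\lambda(A+B)$, i.e.\ the statement to be proved. Moreover, the mechanism you sketch (Ky Fan on a flag of $A+B$ indexed by $I$, via Wielandt's min-max) produces only the actual Wielandt inequality, which after the $B\mapsto -B$ substitution reads
$$
\sum_{i\in I}\lambda_i(A)+\sum_{j=1}^k\lambda_j^\uparrow(B)\ \leq\ \sum_{i\in I}\lambda_i(A+B)\ \leq\ \sum_{j=1}^k\lambda_j(A+B)\ .
$$
The second summand on the left is the sum of the $k$ \emph{smallest} eigenvalues of $B$, not $\sum_{i\in I}\lambda_i^\uparrow(B)$; since $\sum_{j=1}^k\lambda_j^\uparrow(B)\leq\sum_{i\in I}\lambda_i^\uparrow(B)$ for every $|I|=k$, this Wielandt bound is strictly weaker than your claimed subset inequality, and the argument does not close.

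The standard repair (this is how Bhatia proves III.4.1) never asserts your subset inequality for all $I$. One instead proves the equivalent form $\lambda(A)-\lambda(B)\prec\lambda(A-B)$ --- the theorem follows by replacing $B$ with $-B$, since $\lambda(-B)=-\lambda^\uparrow(B)$ as decreasing vectors. Setting $\gamma:=\lambda(A)-\lambda(B)$, for each $k$ one applies Wielandt's inequality to the decomposition $A=B+(A-B)$ at the \emph{single} $k$-subset $I$ on which the coordinates of $\gamma$ are largest; this gives $\sum_{i\in I}\gamma_i\leq\sum_{j=1}^k\lambda_j(A-B)$, which is precisely the $k$-th partial-sum inequality for $\gamma^\downarrow$, and the trace identity then finishes. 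The crucial structural point is that $\gamma_i=\lambda_i(A)-\lambda_i(B)$ is a coordinate-wise difference with matching index $i$, so a single subset $I$ appears on both $A$ and $B$, matching the shape of Wielandt's inequality; your $\mu_i=\lambda_i(A)+\lambda_{d-i+1}(B)$, with its index reversal on $B$, does not have that shape, which is exactly why Step 3 cannot be carried out as sketched.
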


\pausa
Lindskii's inequality plays an important role in our study of optimal frame completion problems. Moreover, the case of equality in Lindskii's inequality, i.e. when $(\lambda(A)+\lambda^\uparrow(B))^\downarrow =\lambda(A+B)$ for $A,\,B\in \matsad$, plays a central role in this paper. We completely characterize such pair of matrices - that we call optimal matching matrices - in the Appendix.

\subsection{Frames and optimal completions with prescribed parameters}\label{problemon}

In several applied situations it is desired to construct 
a sequence $\cF$ in such a way that the  frame 
operator of $\cF$ is given by some $S\in\matpos$ and the squared norms of the frame elements 
are prescribed by a sequence of positive numbers  $\ca=(\alpha_i)_{i\in \IN{n}}\in \R_{>0}^n\,$. 
That is, given a fixed $S\in \matpos$ and $\mathbf{a}\in \R_{>0}^n\,$, we analyze the existence (and construction) of a sequence $\cF=\{f_i\}_{i\in \IN{n}}$ such that $S_\cF=S$ 
and $\|f_i\|^2=\alpha_i\,$, for $i\in \IN{n}\,$. This is known as the classical 
frame design problem. It  has been treated by several research groups (see for example \cite{Illi,CFMP,Casagregado,CMTL,ID,DFKLOW,FWW,KLagregado}).
In what follows we recall 
a solution of the classical frame design problem in the finite dimensional setting, in the way that it is convenient for our analysis.

\begin{pro}[\cite{Illi,MR0}]\label{frame mayo}\rm
Let $B\in \matpos$ with $\lambda(B)\in \R_{+}^d\,^\downarrow$ and let 
$\cb=(\alpha_i)_{i\in\IN{k}} \in \R_{>0}^k\,$. Then there exists 
a sequence $\cW=\{g_i\}_{i\in\IN{k}}\in \cH^k$ with frame operator 
$S_\cW= B$ and such that $\|g_i\|^2=\alpha_i$ for every $i\in\IN{k}\,$ 
if and only if 
$\cb\prec \lambda(B)$ (completing with zeros if $k\neq d$).
\QED
\end{pro}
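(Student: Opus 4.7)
The plan is to deduce both directions of this equivalence from the Schur--Horn theorem, exploiting the well-known symmetry between the frame operator $S_\cW = T_\cW^* T_\cW$ and the Gramian $G_\cW = T_\cW \, T_\cW^*$. The $k \times k$ matrix $G_\cW$ has entries $(G_\cW)_{ij} = \langle g_j , g_i\rangle$, so its diagonal is exactly $(\|g_i\|^2)_{i\in \IN{k}} = \cb$. Moreover, since $T_\cW^* T_\cW$ and $T_\cW T_\cW^*$ share their nonzero spectrum (with the same multiplicities), $\lambda(G_\cW)$ equals $\lambda(B)$ together with $\max\{0, k-d\}$ zeros, while $\lambda(B)$ itself consists of the nonzero eigenvalues of $G_\cW$ padded with zeros (if $k < d$). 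Thus in both length conventions the statement $\cb \prec \lambda(B)$ is equivalent to $\text{diag}(G_\cW) \prec \lambda(G_\cW)$.

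For the necessity direction, I would simply apply the easy half of the Schur--Horn theorem to $G_\cW \in \matrec{k,k}^+$: the diagonal of any selfadjoint matrix is majorized by its spectrum. Combined with the identification above, this yields $\cb \prec \lambda(B)$ as required.

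For the sufficiency direction, assume $\cb \prec \lambda(B)$. The constructive (harder) half of Schur--Horn guarantees a selfadjoint $M \in \matrec{k,k}^+$ with diagonal entries $\alpha_1,\dots,\alpha_k$ and with spectrum equal to $\lambda(B)$ padded with zeros (so that $\rk\, M \leq d$). Since $\rk\, M \leq d$, I can factor $M = A^* A$ with $A\in \matrec{d,k}$; a concrete way is to diagonalize $M = W \Lambda W^*$ with $W$ unitary and let $A$ be the first $d$ rows of $\Lambda^{1/2} W^*$. By construction $\|A e_i\|^2 = M_{ii} = \alpha_i$, and $A A^*\in \matsad$ has the same nonzero spectrum as $M$, hence $\lambda(A A^*) = \lambda(B)$. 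Choosing a unitary $V \in \matud$ that conjugates $A A^*$ to $B$, i.e. $V (AA^*) V^* = B$, and replacing $A$ by $V A$ (which leaves the column norms unchanged since $V$ is an isometry), we obtain $A\in \matrec{d,k}$ with $A A^* = B$ and $\|A e_i\|^2 = \alpha_i$. Defining $g_i = A e_i$ for $i\in \IN{k}$, the sequence $\cW = \{g_i\}_{i \in \IN{k}}$ satisfies $\|g_i\|^2 = \alpha_i$ and $S_\cW = \sum_i g_i \otimes g_i = A A^* = B$.

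The main work is therefore concentrated in the nontrivial half of Schur--Horn — producing a positive matrix with prescribed diagonal and spectrum under the majorization hypothesis — which is classical. Everything else is bookkeeping with the two identifications $S_\cW \leftrightarrow A A^*$ and $G_\cW \leftrightarrow A^* A$, plus a unitary correction in the codomain to turn ``same spectrum as $B$'' into ``equal to $B$'' without disturbing the prescribed norms.
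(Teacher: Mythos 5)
Your proposal is correct, and it is the standard Schur--Horn argument for this classical frame-design result. The paper itself offers no proof---Proposition \ref{frame mayo} is simply cited to \cite{Illi,MR0}---and the route you take (the diagonal of the Gramian $T_\cW\,T_\cW^*$ versus its eigenvalues, matched with the frame operator $T_\cW^*\,T_\cW$ through the shared nonzero spectrum, followed by a unitary conjugation in the codomain to turn ``same spectrum as $B$'' into ``equal to $B$'' without disturbing the column norms) is precisely the one in those references.
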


\pausa
Recently, researchers have made a step forward in the classical frame design problem and have asked about the structure of {\bf optimal} frames with prescribed parameters.
For example, consider the following problem posed in \cite{FicMixPot}:
let $\cH \cong \C^d$ and let $\cF_0=\{f_i\}_{i\in \IN{\n0}}
\in \cH^{\n0  }$ be a fixed (finite) sequence of vectors. 
Consider a sequence $\ca
= (\alpha_i)_{i\in \IN{k}} \in \R_{>0}^k\,$ such
that $\rk \, S_{\cF_0} \ge d-k$ and denote by $n=\n0+k$.  Then, 
with this fixed data, the problem is to construct a sequence  
$$
\cG = \{f_i\}_{i=\n0  +1}^n\in \cH^{k}  \peso{with} \|f_{\n0+i}\|^2=\alpha_i \peso{for}
1\leq i\leq k  \ , 
$$
such that the resulting completed sequence is a frame 
$\cF= (\cF_0\coma \cG )= \{f_i\}_{i\in \IN{n}}\in\RS(n \coma d)$  
whose MSE  $\tr \, S_\cF^{-1}$ is minimal among 
all possible such completions.

\pausa
Note that there are other possible ways to measure robustness 
(optimality) of the completed frame $\cF$ as above. 
For example, we can consider optimal (minimizing) completions, with prescribed norms, 
for the Benedetto-Fickus' potential. In this case we search for a frame 
$\cF= (\cF_0\coma \cG )= \{f_i\}_{i\in \IN{n}}\in\RS(n \coma d)$, with
$\|f_{\n0+i}\|^2=\alpha_i$ for $1\leq i\leq k$, and such that its frame potential 
$\FP(\cF)=\tr \,S_\cF ^2 $ is minimal  
among all possible such completions. Indeed, this problem has been 
considered before in the particular case in which $\cF_0=\emptyset$ 
in \cite{BF,Phys,FJKO,JOk,MR}.

\pausa
In this paper we shall consider the problems of optimal completion with prescribed norms, where optimality is measured with respect to general convex potentials (see Definition \ref{pot generales}). In order to describe our main problem we first fix the 
notation that we shall use throughout the paper.

\begin{fed}\rm 
Let $\cF_0=\{f_i\}_{i\in \IN{\n0  }}\in \cH^{\n0  }$ and 
$\ca= (\alpha_i)_{i\in \IN{k}} \in \R_{>0}^k\,$ 
such that $d-\rk \, S_{\cF_0} \le k$. Define $n=\n0+k$. Then
\ben
\item In what follows we say that
 $(\cF_0 \coma \ca)$ are initial data for the completion problem (CP). 
\item 
For these data we consider the sets
$$
\cC_\ca(\cF_0)=\big\{\, \{f_i\}_{i\in \IN{n}} 
\in \hil^n : \{f_i\}_{i\in \IN{\n0  }}= \cF_0 \py \|f_{\n0+i}\|^2
=\alpha_i \ \mbox{ for }  \ i \in \IN{k}\big\}\ ,
$$ 
$$
\py 
\cS\cC_\ca(\cF_0)=\{S_\cF:\ \cF\in \cC_\ca(\cF_0)\} \inc \matpos
\ . 
$$
\een
When the initial data $(\cF_0 \coma \ca)$ are fixed, 
we shall use throughout the paper the notations 
\beq
S_0 = S_{\cF_0} \ \ \coma \ \ \la = \la(S_0)\ \  \py\ \ n = \n0+k \ .
\EOEP\eeq
\end{fed}

\pausa
{\bf Problem:} (Optimal completions with prescribed norms 
with respect to  $P_f\,$) \ID \ and let $f:[0,\infty)\rightarrow \R$ be 
a strictly convex function. Construct all possible 
$\cF\in \cC_\ca(\cF_0)$ that are the minimizers of $P_f$ 
in $\cC_\ca(\cF_0)$. \EOE

\pausa
Our analysis of the completed frame $\cF= (\cF_0\coma \cG )$ will depend on $\cF$ 
through $S_\cF\,$. Hence, the following description of $\cS\cC_\ca(\cF_0)$ plays a central role in our approach.

\begin{pro}\label{con el la y mayo}
\ID .
Then 
$$
\cS\cC_\ca(\cF_0) =
\big\{S\in \matpos \, :\, 
S\geq S_{\cF_0}  \py \ca \prec \lambda(S-S_{\cF_0})  
\big\}
\ .
$$ 
\end{pro}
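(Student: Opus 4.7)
The statement is essentially a translation of the classical frame design problem (Proposition \ref{frame mayo}) from the frame operator of a single sequence to the frame operator of a completed sequence. The key observation is that for any $\cF=(\cF_0,\cG)\in \cC_\ca(\cF_0)$ with $\cG=\{f_{\n0+i}\}_{i\in\IN{k}}$, the frame operator decomposes additively:
\beq\label{additivity}
S_\cF \;=\; \sum_{i\in\IN{n}} f_i\otimes f_i \;=\; S_{\cF_0} + S_\cG\,,
\eeq
so $S_\cF\geq S_{\cF_0}$ automatically and $S_\cF-S_{\cF_0}=S_\cG$ is itself a frame operator of a $k$-element sequence with the prescribed squared norms. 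With this in mind the proof reduces to a clean double inclusion.

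\textbf{Inclusion ($\subseteq$).} I would take $S=S_\cF$ for some $\cF=(\cF_0,\cG)\in\cC_\ca(\cF_0)$ and use \eqref{additivity} to identify $S-S_{\cF_0}=S_\cG\in\matpos$. Applying Proposition \ref{frame mayo} to the sequence $\cG=\{f_{\n0+i}\}_{i\in\IN{k}}$, whose frame operator is $S_\cG$ and whose squared norms are prescribed by $\ca$, yields the majorization $\ca \prec \lambda(S_\cG)=\lambda(S-S_{\cF_0})$ (with the usual zero-padding convention from Section \ref{subsec 2.2} when $k\neq d$). This gives one inclusion.

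\textbf{Inclusion ($\supseteq$).} Conversely, I would start from an arbitrary $S\in\matpos$ with $S\geq S_{\cF_0}$ and $\ca\prec \lambda(S-S_{\cF_0})$, and set $B=S-S_{\cF_0}\in\matpos$. By the sufficiency direction of Proposition \ref{frame mayo}, the majorization $\ca\prec \lambda(B)$ produces a sequence $\cG=\{g_i\}_{i\in\IN{k}}\in\cH^k$ with $\|g_i\|^2=\alpha_i$ and $S_\cG=B$. Defining $\cF=(\cF_0,\cG)$ (that is, $f_{\n0+i}=g_i$ for $i\in\IN{k}$) one has $\cF\in\cC_\ca(\cF_0)$, and \eqref{additivity} gives $S_\cF=S_{\cF_0}+B=S$, so $S\in\cS\cC_\ca(\cF_0)$.

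\textbf{Expected obstacle.} There is no serious obstacle here; both directions are immediate once \eqref{additivity} is isolated and Proposition \ref{frame mayo} is invoked. The only minor point requiring care is the bookkeeping of dimensions when $k<d$ versus $k\geq d$: in the former case the majorization $\ca\prec \lambda(S-S_{\cF_0})$ with $\ca\in\R^k$ padded by $d-k$ zeros forces $\rk(S-S_{\cF_0})\leq k$ (since padded $\ca$ must majorize $\lambda(S-S_{\cF_0})$ with equal trace), which is precisely what is needed for $S-S_{\cF_0}$ to arise as the frame operator of a $k$-element sequence. The hypothesis $d-\rk S_{\cF_0}\leq k$ in the definition of initial data plays no role in the equivalence itself; it only guarantees that $\cS\cC_\ca(\cF_0)$ is large enough to contain invertible operators.
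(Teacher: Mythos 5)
Your proposal is correct and takes essentially the same approach as the paper: both hinge on the additivity $S_{\cF} = S_{\cF_0} + S_{\cG}$ and then invoke Proposition \ref{frame mayo} in both directions. Your remark that $\ca \prec \lambda(S-S_{\cF_0})$ (with zero-padding and equal traces) already forces $\rk(S-S_{\cF_0}) \leq k$ is a correct and useful clarification that the paper leaves implicit.
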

\proof 
Observe that if $\cF = (\cF_0\coma \cG ) \in \hil^n$ then 
$S_{\cF} = S_{\cF_0} + S_{\cG }\,$. 
Denote by $ S_0 = S_{\cF_0}$ and 
 $B = S-S_0\,$, for  $S\in \matpos$. 
Applying Proposition \ref{frame mayo} to the matrix $B$
(which must be nonnegative if $S\in \cS\cC_\ca(\cF_0)\,$), 
we get the equality of the sets. 
\QED

\begin{rem}[Optimal completion problem with prescribed norms: the feasible case]\label{caso feasible}
\ID\ . 
Denote by $S_0 = S_{\cF_0}\,$, $\la = \la (S_0)$ and $t=\tr\, \la + \tr \, \ca$. In \cite{MRS4} we introduced the following set 
$$ 
U_t(S_0\coma m)=\{S_0+B:\   B\in \matpos \, , \ \rk \, B 
\le d-m \ , \ \tr\,(S_0+B)\ =\  t\ \} \ ,
$$
where $m = d-k$. In \cite[Theorem 3.12]{MRS4} 
 it is shown that there exist $\prec$-minimizers in $U_t(S_{0}\coma m)$.  
Indeed, there exists $\nu=\nu(\la \coma m)\in (\R^d_{\geq 0})^\downarrow$ - that 
can be effectively computed by simple algorithms - such that $S\in U_t(S_{0}\coma m)$ is 
a $\prec$-minimizer if and only if $\la(S)=\nu$. 

\pausa
We say that the completion problem for $(\cF_0,\cb)$ is {\bf feasible} if $\mu\igdef\nu-\la$ satisfies that $\cb\prec\mu$, where $\nu=\nu(\la,m)$ is as above. In this case for any $S$ which is a $\prec$-minimizer in $U_t(S_0\coma m)$ it holds that $\la(S-S_0)=\mu^\downarrow$ and hence, by Proposition \ref{con el la y mayo}, we conclude that $S\in \cS\cC_\ca(\cF_0)$. Moreover, Proposition \ref{con el la y mayo} also shows that  $\cS\cC_\ca(\cF_0)\inc U_t(S_0\coma m)$ and therefore $S$ is a $\prec$-minimizer in $\cS\cC_\ca(\cF_0)$. In this case, as a consequence of the results in Section \ref{subsec 2.2}, 
any completion $\cF\in \cC_\ca(\cF_0)$ such that $S_\cF=S$ is a minimizer of $P_f$ for any convex function $f:[0,\infty)\rightarrow \R$. That is, in the feasible case we have structural solutions of the completion problem, in the sense that these solutions do not depend on the particular choice of convex potential considered.  

\pausa
Nevertheless, it is easy to construct examples in which the completion problem for $(\cF_0\coma
\ca)$ is not feasible.
For example, consider the frame $\cF_0\in \RS(7\coma 5)$ 
 whose synthesis operator is 
\beq\label{ejemF0}
T_{\cF_0}^* = \mbox{ \scriptsize $  
\left[
 \begin{array}{rrrrrrr}
    0.9202&   -0.7476&   -0.4674&    0.9164&    0.1621&    0.3172&   -0.5815\\
    0.4556&    0.0164&    0.0636&    1.0372&   -1.6172&    0.3688&    0.2559\\
   -0.0885&   -0.3495&   -0.9103&    0.3672&   -0.6706&   -0.9252&    0.6281\\
    0.1380&   -0.4672&   -0.6228&   -0.1660&    0.9419&    1.0760&    1.1687\\
    0.7082&    0.2412&   -0.1579&   -1.8922&   -0.4026&    0.1040&    1.6648\\
 \end{array}
 \right]$}
 \ .
 \eeq
In this case
$\la=\la(S_{\cF_0})=(9\coma 5\coma 4\coma 2\coma 1) $ and 
 $t_0 = \tr\,S_{\cF_0}= 21$.  Fix the data $n=9$ (hence $k=2$),  $\cb = (3.5 \coma 2)$ and notice that then
$t = t_0 + \tr\, \cb = 26.5$ and $m = d-k = 3$. 
Then, according to the results in \cite{MRS4} we know that the optimal 
spectrum for $U_t(S_0\coma m)$ is 
$\nu_{\la\coma m}(26.5) = (9\coma 5\coma 4.25 \coma 4.25 \coma 4)$. Therefore, we have that 
$\nu-\la=\mu=(2.25\coma 3.25)$ so that $\cb \not \prec\mu$, that is the completion problem $(\cF_0,\cb)$ is not feasible.

\pausa
The structure of the optimal completions with these norms was not known, even for the MSE.
In what follows we shall give a complete description of the optimal frame completions - with respect to an arbitrary convex potential - for this initial data (see Example \ref{ejemp Acha}). 
\EOE
\end{rem}

\section{The spectrum of the minimizers of $P_f$ on $\cC_\ca(\cF_0)$}\label{sec3}
\ID. 
Let $\mu\in \R_{\geq 0}^d$ be such that $\cb\prec \mu$. 
We consider the set 
$$
\cC_\ca(\cF_0 \coma \mu)
\igdef\{ \cF=(\cF_0\coma \cG )\in \cC_\ca(\cF_0):\ \lambda(S_1)
=\mu^\downarrow\} 
\inc\cC_\ca(\cF_0) \ .
$$
Notice that if $\cF=(\cF_0,\cG)$ then $S_\cF=S_{\cF_0}+S_\cG\,$. 
By Proposition \ref{con el la y mayo} we get the following partition:  
\beq\label{particion}
\cC_\ca(\cF_0)=\bigsqcup_{\mu\in \,\Gamma_d(\cb)} 
\cC_\ca(\cF_0 \coma \mu)
\peso{where}  
\quad \Gamma_d(\cb)\igdef  \{ \mu\in (\R_{\geq 0}^d)^\uparrow: \ \cb\prec \mu\}
\ .
\eeq

\begin{teo}\label{dale que va}
Consider the previous notations and 
fix $\mu = \mu\ua \in \Gamma_d(\cb)$. Then,
\begin{enumerate}
\item The set $\Lambda(\cC_\ca(\cF_0\coma \mu ))\igdef 
\{\lambda(S_\cF):\ \cF\in \cC_\ca(\cF_0\coma \mu )  \}$ is convex.
\item The vector  $\nu=(\lambda(S_{\cF_0})+\mu )^\downarrow$ is 
a $\prec$-minimizer in $\Lambda(\cC_\ca(\cF_0\coma \mu ))$.
\item If $\cF=(\cF_0\coma \cG )\in  \cC_\ca(\cF_0\coma \mu )$ 
is such that $\lambda(S_\cF)=\nu$ then $S_{\cF_0}$ and $S_{\cG }$ commute.
\end{enumerate}
\end{teo}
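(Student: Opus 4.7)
The plan is to dispatch parts (2) and (3) first by direct appeal to Lindskii's inequality and its equality case, and then use the resulting structure to address the convexity claim in (1).

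For (2), I would apply Theorem \ref{LindsTeo} to $A=S_{\cF_0}$ and $B=S_\cG$ for an arbitrary $\cF=(\cF_0,\cG)\in\cC_\ca(\cF_0,\mu)$. Since $\lambda(B)=\mu^\downarrow$ while $\mu=\mu^\uparrow$ by the hypothesis on $\Gamma_d(\cb)$, one has $\lambda^\uparrow(B)=\mu$, and Lindskii gives
$$\lambda(S_{\cF_0})+\mu\ \prec\ \lambda(S_{\cF_0}+S_\cG)=\lambda(S_\cF),$$
so that $\nu\prec\lambda(S_\cF)$, which is the lower bound in the $\prec$-order. For attainability I would fix an ONB $\{h_i\}_{i\in\IN{d}}$ of eigenvectors of $S_{\cF_0}$ matching the decreasing ordering $\lambda(S_{\cF_0})$, and define $B_0=\sum_{i\in\IN{d}}\mu_i\,h_i\otimes h_i$. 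Then $B_0$ commutes with $S_{\cF_0}$, $\lambda(B_0)=\mu^\downarrow$, and $S_{\cF_0}+B_0$ is diagonal in $\{h_i\}$ with entries $\lambda_i+\mu_i$ whose decreasing rearrangement is exactly $\nu$. Since $\ca\prec\mu=\lambda(B_0)$, Proposition \ref{frame mayo} produces a sequence $\cG_0\in\cH^k$ with $S_{\cG_0}=B_0$ and the prescribed norms $\|g_{0,i}\|^2=\alpha_i$; the completion $(\cF_0,\cG_0)\in\cC_\ca(\cF_0,\mu)$ realizes $\nu$.

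For (3), if $\cF=(\cF_0,\cG)\in\cC_\ca(\cF_0,\mu)$ satisfies $\lambda(S_\cF)=\nu$ then Lindskii's inequality for $S_{\cF_0}$ and $S_\cG$ is an equality of decreasing rearrangements. This is the equality case of Lindskii, which the Appendix (Section \ref{secAppendix}) characterizes via the optimal matching matrix framework. The conclusion to extract is that $S_{\cF_0}$ and $S_\cG$ admit a common ONB of eigenvectors with matching orderings (decreasing for one, increasing for the other); in particular, the two operators commute.

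Part (1) is the subtle step. By Proposition \ref{frame mayo} and the hypothesis $\ca\prec\mu$, every $B\in\matpos$ with $\lambda(B)=\mu^\downarrow$ arises as the frame operator of some completing sequence with the prescribed norms, so
$$\Lambda(\cC_\ca(\cF_0,\mu))=\{\lambda(S_{\cF_0}+B):B\in\matpos,\ \lambda(B)=\mu^\downarrow\}.$$
The difficulty is that the unitary orbit of a fixed diagonal is not convex, so convex combinations of spectra do not come from convex combinations of matrices. My plan is to decompose $\cH$ according to the eigenspaces of $S_{\cF_0}$ and, leveraging the block-diagonal structure afforded by the optimal matching analysis of the Appendix together with the doubly-stochastic description of majorization (Remark \ref{rem doublystohasctic}), to construct, for every $\nu^1,\nu^2\in\Lambda(\cC_\ca(\cF_0,\mu))$ and every $t\in[0,1]$, an explicit positive operator $B^t$ with $\lambda(B^t)=\mu^\downarrow$ and $\lambda(S_{\cF_0}+B^t)=t\nu^1+(1-t)\nu^2$. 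The main obstacle is controlling the nonlinearity of the eigenvalue map when the blocks have dimensions greater than one; this is where the Appendix's machinery, and possibly the perturbation tools that appear later in Section \ref{Sec4}, will do the heavy lifting.
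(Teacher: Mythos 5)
Your treatment of parts (2) and (3) is correct and follows essentially the same path as the paper: Lindskii's inequality gives the $\prec$-lower bound, an explicit diagonal construction (equivalently, closure of the set $\{S_{\cG}\}$ under unitary conjugation) shows $\nu$ is attained, and the equality case is resolved by Theorem~\ref{teoleq} from the Appendix (the commutativity characterization of optimal matchings).

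Part (1) is where there is a genuine gap. You correctly identify that
\[
\Lambda(\cC_\ca(\cF_0,\mu))=\{\lambda(A+B):\ A,B\in\matsad,\ \lambda(A)=\lambda(S_{\cF_0}),\ \lambda(B)=\mu^\downarrow\}\ ,
\]
and you correctly note that the unitary orbit of a fixed positive operator is not convex, so a naive averaging of matrices does not work. But the plan you sketch to overcome this does not hold together. The Appendix's optimal matching analysis gives block-diagonal structure only in the case of \emph{equality} in Lindskii's inequality, i.e., only for the distinguished extreme point $\nu$; for a generic $\nu^1\in\Lambda(\cC_\ca(\cF_0,\mu))$ the corresponding $B$ need not commute with $S_{\cF_0}$, so there is no eigenspace decomposition of $\cH$ to leverage. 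The doubly-stochastic picture (Remark~\ref{rem doublystohasctic}) is about majorization of vectors, not about realizing a prescribed vector as $\lambda(A+B)$. And you explicitly concede the key obstruction (``controlling the nonlinearity of the eigenvalue map when the blocks have dimensions greater than one'') without resolving it. The paper instead invokes Klyachko's theory \cite{Klyachko}: the set of possible spectra $\lambda(A+B)$ for $A,B$ with fixed spectra is cut out by the linear Horn inequalities, hence is a convex polytope. This convexity is a deep theorem; it is not something to be reconstructed from the Appendix's equality analysis or the perturbation tools of Section~\ref{Sec4}, both of which operate locally or at a single extreme point. As written, your proof of (1) is a statement of intent, not an argument.
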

\begin{proof}
1. First notice that the set of all frame operators 
$S_{\cG }\in \matpos$ such that $\cF=(\cF_0\coma \cG )
\in \cC_\ca(\cF_0 \coma \mu )$ is closed under unitary equivalence.  
Indeed, if $U \in \matu$, then $U\, S_{\cG }\, U^*$ is the frame 
operator of the sequence $U\cdot \cG =\{Uf_i\}_{i=\n0+1}^n\,$.
Denote by $\la =  \lambda(S_{\cF_0})$. Therefore, 
it is straightforward to check that 
$$
\Lambda(\cC_\ca(\cF_0 \coma \mu ))=\{\lambda(C):\ C=A+B, \ A,\,B\in\matsa , 
\, \lambda(A)=\lambda \py \lambda(B)=\mu  \}\ .
$$ 
By Klyachko's theory on the sum of hermitian matrices with a given spectra \cite{Klyachko}
, we conclude that the set $\Lambda(\cC_\ca(\cF_0 \coma \mu ))$ is convex.

\pausa
2. Since the set of all frame operators $S_{\cG }\in \matpos$ such that 
$\cF=(\cF_0 \coma \cG )\in \cC_\ca(\cF_0 \coma \mu )$ is closed under 
unitary equivalence it is clear that $\nu\in \Lambda(\cC_\ca(\cF_0 \coma \mu ))$. 
On the other hand, given $\cF=(\cF_0\coma \cG )\in \cC_\ca(\cF_0 \coma \mu )$, 
then Lindskii's inequality (see Theorem \ref{LindsTeo}) states that the 
vector $\nu\prec\lambda(S_{\cF_0}+S_{\cG }) = \la (S_\cF)$. This establishes 
that $\nu$ is a $\prec$-minimizer in $\Lambda(\cC_\ca(\cF_0 \coma \mu ))$.

\pausa
3. This is a restatement of Theorem \ref{teoleq}.
\end{proof}

\begin{rem}\label{min OP}
Consider the previous notations and fix $\mu = \mu\ua\in \Gamma_d(\cb)$. 
Let $f:[0 \coma \infty)\rightarrow [0 \coma \infty)$ 
be a strictly convex function and let $P_f$ be the convex potential induced by $f$.  
By the results described in Section \ref{subsec 2.2} 
and Theorem \ref{dale que va} we see that, if   $\la=\la(S_{\cF_0})$ then 
\beq\label{eqpara cafop}\cF\in
{\rm argmin}\{P_f(\cG)  : \cG\in \cC_\ca(\cF_0 \coma \mu )\} 
\iff \lambda(S_\cF)=(\lambda+\mu )\da  =(\,\la\da +\mu \ua\,)\da\ .
\eeq 
That is, if we consider the partition of $\cC_\ca(\cF_0)$ described in Eq. \eqref{particion}, 
then in each slice $\cC_\ca(\cF_0 \coma \mu )$ the minimizers of the potential $P_f$ 
are characterized by the spectral condition  \eqref{eqpara cafop}.

\pausa
This shows that in order to search for global minimizers 
of $P_f$ on $\cC_\ca(\cF_0)$ we 
can restrict our attention to the set 
\beq \label{eqcafop} 
\cC_\ca^{\rm op}(\cF_0) \igdef 
\big\{\, \cF=(\cF_0\coma \cG )\in \cC_\ca(\cF_0):\ \lambda(S_{\cF})
= \big(\,\la(S_{\cF_0})+\la^\uparrow(S_{\cG })\, \big)^\downarrow \, \big\} \ .
\eeq
Indeed, Eqs. \eqref{particion} and  \eqref{eqpara cafop} 
show that if $\cF$ is a minimizer of $P_f$ in $\cC_\ca(\cF_0)$ 
then $\cF\in \cC_\ca^{\rm op}(\cF_0)$, i.e. 
\beq\label{en el op}
{\rm argmin} \, \{P_f(\cF): \cF\in \cC_\ca(\cF_0)\} \ = \ 
{\rm argmin} \, \{P_f(\cF): \cF\in \cC_\ca^{\rm op}(\cF_0)\} 
\ .
\eeq
Since the potential $P_f(\cF)$ depends on $\cF$ through the 
eigenvalues of $S_\cF$ we 
introduce the set 
\beq\label{defi scafop} 
\cS(\cC_\ca^{\rm op}(\cF_0)) \igdef
\{S_\cF:\ \cF\in \cC_\ca^{\rm op}(\cF_0)\}
\inc \matpos \ .
\eeq
Finally, for any $\la \in \R_{\ge0}^d \,$,  in what follows we shall also consider the set 
\beq
\Lambda_\ca^{\rm op}(\lambda) \igdef
\{\lambda\da +\mu :\mu \in \Gamma_d(\cb)\} = 
\{\lambda\da +\mu\ua : \ \mu \in \R^d_{\geq 0} \py \cb \prec \mu \}\ .
\EOEP
\eeq
\end{rem}

\begin{teo}\label{teocafop2}  
\ID.
Denote by  $\lambda=\lambda(S_{\cF_0})$. Then
\ben
\item The set $\Lambda_\ca^{\rm op}(\lambda)$ is compact and convex.
\item 
The spectral picture $\{\lambda(S_\cF):\ \cF\in \cC_\ca^{\rm op}(\cF_0) \}
=\{ \nu\,^\downarrow:\ \nu\in \Lambda_\ca^{\rm op}(\lambda)\}$.
\item If $\cF=(\cF_0 \coma \cG )\in\cC_\ca^{\rm op}(\cF_0)$, 
with $\lambda^\uparrow(S_{\cG }) =\mu$, then there exists 
$\{v_i: i\in \IN{d}\}$ an ONB
 of eigenvectors for $S_{\cF_0}\coma \la$  such that 
\beq\label{la BON2}
S_{\cG } = \sum_{i\in \IN{d}} \, \mu_i \cdot v_i \otimes v_i 
\py 
S_{\cF}=S_{\cF_0} + S_{\cG } = \sum_{i\in \IN{d}} \, (\la_i + \mu_i) \, v_i \otimes v_i \ .
\eeq
\een 
\end{teo}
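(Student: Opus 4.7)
The plan is to deduce all three items as relatively direct consequences of Theorem \ref{dale que va} (via the partition \eqref{particion}) together with the Appendix's characterization of equality in Lindskii's inequality (Theorem \ref{teoleq}).

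For item (1), I would first observe that $\Gamma_d(\cb)$ is a compact convex polytope in $\R^d$: convexity follows from the fact that a convex combination of vectors majorizing $\cb$ still majorizes $\cb$ (e.g.\ using doubly stochastic matrices, cf.\ Remark \ref{rem doublystohasctic}); the trace equality $\sum_i\mu_i=\sum_i\al_i$ forced by $\cb\prec\mu$ together with $\mu\geq 0$ bounds the set; and closedness is clear as it is cut out by finitely many non-strict linear inequalities. Then $\Lambda_\ca^{\rm op}(\la)=\la^\downarrow+\Gamma_d(\cb)$ is the translate of a compact convex set, hence compact and convex.

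For item (2), the inclusion $\{\la(S_\cF):\cF\in\caop\}\subseteq\{\nu^\downarrow:\nu\in\Lambda_\ca^{\rm op}(\la)\}$ is built into the definition \eqref{eqcafop}, once one notes that for any $\cF=(\cF_0,\cG)\in\cC_\ca(\cF_0)$ the vector $\mu=\la^\uparrow(S_\cG)$ automatically lies in $\Gamma_d(\cb)$ by Proposition \ref{frame mayo}. For the reverse inclusion, given $\mu\in\Gamma_d(\cb)$, Theorem \ref{dale que va}(2) says that $\nu=(\la+\mu)^\downarrow$ belongs to $\Lambda(\cC_\ca(\cF_0\coma\mu))$, so there exists $\cF=(\cF_0,\cG)\in\cC_\ca(\cF_0\coma\mu)$ with $\la(S_\cF)=\nu$. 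Since $\cF\in\cC_\ca(\cF_0\coma\mu)$ forces $\la(S_\cG)=\mu^\downarrow$, equivalently $\la^\uparrow(S_\cG)=\mu$, the identity $\la(S_\cF)=(\la+\la^\uparrow(S_\cG))^\downarrow$ holds and hence $\cF\in\caop$.

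For item (3), the starting point is Theorem \ref{dale que va}(3): since $\cF\in\caop$ with $\la^\uparrow(S_\cG)=\mu$ means exactly $\cF\in\cC_\ca(\cF_0\coma\mu)$ with $\la(S_\cF)=(\la+\mu)^\downarrow=\nu$, we conclude that $S_{\cF_0}$ and $S_\cG$ commute. Thus they admit a common ONB of eigenvectors. The remaining and principal content is that this common ONB can be indexed as $\{v_i\}_{i\in\IN{d}}$ so that $S_{\cF_0}v_i=\la_i v_i$ with $\la=\la^\downarrow$ \emph{and simultaneously} $S_\cG v_i=\mu_i v_i$ with $\mu=\mu^\uparrow$ (the ``oppositely sorted'' alignment); from there \eqref{la BON2} is immediate. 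This alignment is precisely the equality case in Lindskii's inequality $\la(A)+\la^\uparrow(B)\prec\la(A+B)$ (Theorem \ref{LindsTeo}), applied to $A=S_{\cF_0}$, $B=S_\cG$, where equality holds because $\la(S_\cF)=(\la+\mu)^\downarrow$. This is exactly the characterization of optimal matching matrices developed in the Appendix (Theorem \ref{teoleq}), which I would invoke to finish.

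The main obstacle is the last step of item (3): commutativity alone supplies a common eigenbasis but not the specific coupling between the orderings of the two spectra. Overcoming this is the role of the optimal matchings machinery in the Appendix, and it is essentially the only non-formal ingredient; items (1) and (2) are bookkeeping on top of Theorem \ref{dale que va}.
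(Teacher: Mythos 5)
Your proposal is essentially the same as the paper's proof: item 1 by translating the (convex, compact) polytope $\Gamma_d(\cb)$, item 2 from the definitions together with Proposition \ref{frame mayo} (your detour through Theorem \ref{dale que va}(2) reaches the same conclusion), and item 3 by commutativity plus the Appendix's optimal-matching characterization. One small citation slip: at the end of item 3, the result that supplies the simultaneously diagonalizing ONB with $\la=\la^\downarrow$ paired against $\mu=\mu^\uparrow$ is Theorem \ref{LA ONB}, not Theorem \ref{teoleq}; the latter only gives commutativity, which you already obtained from Theorem \ref{dale que va}(3). The ``oppositely sorted'' alignment requires the additional rearrangement argument (Propositions \ref{propwk1} and \ref{propwk2 b}) packaged in Theorem \ref{LA ONB}.
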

\begin{proof} 1.  If $\nu\coma \ga\in \Lambda_\ca^{\rm op}(\lambda)$ 
then there exist $\mu \coma \rho\in \Gamma_d(\cb)$ 
such that $\nu=\lambda^\downarrow+\mu $, $\ga=\lambda^\downarrow+\rho$. 
Note that $\Gamma_d(\cb)$ is convex. Hence, 
if $t\in (0,1)$,  then 
$\mu_t \igdef t \,\mu + (1-t)\, \rho \in \Gamma_d(\cb)$ and  
$$
t \,\nu + (1- t)\, \ga= \lambda^\downarrow + 
t \,\mu + (1-t)\, \rho = 
\lambda^\downarrow + \mu_t \in \Lambda_\ca^{\rm op}(\lambda)
\ .
$$
Item 2. is an immediate consequence of the definitions of 
$\Lambda_\ca^{\rm op}(\lambda)$ and $\cC_\ca^{\rm op}(\cF_0)$. 

\pausa
3. Let $\cF=(\cF_0 \coma \cG )\in\cC_\ca(\cF_0)^{\rm op}$. Then the frame operator $S_{\cG }$ is an optimal matching matrix for $S_{\cF_0}$ in the sense of Eq. \ref{los S1} (see the Appendix).
Hence, the existence of an ONB  $\{v_i:\ i\in \IN{d}\}$ 
for $S_0\coma \la$ satisfying Eq. \eqref{la BON2} follows from
Theorem \ref{LA ONB}.
\end{proof}

\pausa
\ID. Recall that 
$\Gamma_d(\cb)= \{ \mu\in (\R_{\geq 0}^d)^\uparrow: \ \cb\prec \mu\}$. In what follows we use the following notation: if $f:[0,\infty)\rightarrow \R$ is a function we consider $F:\R_{\geq 0}^d \rightarrow \R$ given by $F(\gamma)=\sum_{i\in \IN{d}} f(\gamma_i)$, for $\gamma\in \R_{\geq 0}^d\,$.

\begin{teo}\label{teo sobre unico espectro}  
\ID \ and let $f:[0 \coma \infty)\rightarrow [0 \coma \infty)$
be a strictly convex function. Then there 
exists a vector $\mu(\lambda \coma \ca \coma f) =\mu = 
\mu ^\uparrow\in 
\Gamma_d(\cb)$ such that: 
\ben
\item $\cF=(\cF_0 \coma \cG )\in \cC_\ca(\cF_0)$ is a global minimizer of $P_f \iff
\cF\in \cC_\ca^{\rm op}(\cF_0)$ and $\lambda^\uparrow(S_{\cG })=\mu$.
\item If we let $\la = \la(S_{\cF_0})$ then $\mu$ is  uniquely determined by the conditions 
\beq 
\mu \in \Gamma_d(\cb) \py 
F(\la+\mu)=\min_{\gamma\in\Gamma_d(\cb)}F(\la+\gamma) =
\min_{\nu\in\Lambda_\ca^{\rm op}(\la)}F(\nu) \ .
\eeq
\item Moreover, $\mu$ also satisfies that 
\beq\label{pegaditos}
0< \mu_i=\mu_{i+1}  \implies \lambda_i= \lambda_{i+1}\ \peso{for every} i \in \IN{d-1}\ .
\eeq
\een
\end{teo}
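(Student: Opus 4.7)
The plan is to reduce the minimization of $P_f$ to a strictly convex optimization over the compact convex polytope $\Gamma_d(\cb)$, and then prove item~3 by a local perturbation argument that exploits strict convexity.

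Combining Eq.~\eqref{en el op} with Theorem~\ref{teocafop2}, for any $\cF=(\cF_0,\cG)\in\cC_\ca^{\rm op}(\cF_0)$ with $\mu=\lambda^\uparrow(S_\cG)\in\Gamma_d(\cb)$ one has $\lambda(S_\cF)=(\la+\mu)\da$, so
$$
P_f(\cF)=\tr f(S_\cF)=\sum_{i\in\IN{d}}f(\la_i+\mu_i)=F(\la+\mu),
$$
and every $\mu\in\Gamma_d(\cb)$ is realized this way. Thus minimizing $P_f$ on $\cC_\ca(\cF_0)$ amounts to minimizing $\mu\mapsto F(\la+\mu)$ on $\Gamma_d(\cb)$. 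Since $\Gamma_d(\cb)$ is a compact convex polytope (boundedness from the fixed trace $\tr\mu=\tr\cb$, convexity from the convexity of the majorization relation) and $\mu\mapsto F(\la+\mu)$ is strictly convex (as $f$ is), there exists a unique minimizer $\mu\in\Gamma_d(\cb)$. This establishes items~1 and~2.

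For item~3 I would argue by contradiction. Suppose the optimal $\mu$ satisfies $0<\mu_i=\mu_{i+1}$ but $\la_i>\la_{i+1}$ (strict because $\la$ is decreasing). Let $i_*\leq i<i+1\leq i^*$ delimit the maximal constant block of $\mu$ containing position $i$; then $\la_{i_*}\geq\la_i>\la_{i+1}\geq\la_{i^*}$. Define $\tilde\mu$ by $\tilde\mu_{i_*}=\mu_{i_*}-\epsilon$, $\tilde\mu_{i^*}=\mu_{i^*}+\epsilon$ and $\tilde\mu_j=\mu_j$ otherwise, for $\epsilon>0$ small. Maximality of the block together with $\mu_{i_*}>0$ keeps $\tilde\mu$ in $(\R_{\geq 0}^d)^\uparrow$, and a direct inspection shows that the partial sums of the decreasing rearrangement of $\tilde\mu$ only increase (by $\epsilon$ on the range of indices separating the endpoints of the block), so $\cb\prec\tilde\mu$ and $\tilde\mu\in\Gamma_d(\cb)$. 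Finally, setting $a=\la_{i_*}+\mu_{i_*}>b=\la_{i^*}+\mu_{i^*}$, strict convexity of $f$ yields $f(a-\epsilon)+f(b+\epsilon)<f(a)+f(b)$, and therefore $F(\la+\tilde\mu)<F(\la+\mu)$, contradicting the minimality of $\mu$.

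The main technical point will be the partial-sum bookkeeping in item~3: namely, verifying that the majorization $\cb\prec\tilde\mu$ is preserved under the perturbation, using the fact that $\mu$ was already in increasing order so that the decreasing rearrangement is simply obtained by reversal. The rest reduces to standard convex-analysis principles (compactness plus strict convexity) combined with the reductions already established in Remark~\ref{min OP} and Theorem~\ref{teocafop2}.
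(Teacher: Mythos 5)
Your proposal is correct, and for items~1 and~2 it takes essentially the route the paper takes: reduce to minimizing the permutation-invariant strictly convex function $F$ over a compact convex polytope ($\Gamma_d(\cb)$ here, $\Lambda_\ca^{\rm op}(\la) = \la + \Gamma_d(\cb)$ in the paper, which is the same thing) and invoke uniqueness of the minimizer.

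For item~3, you take a genuinely different (and arguably cleaner) perturbation. The paper perturbs the \emph{adjacent} entries $\mu_i, \mu_{i+1}$ by $\mp\varepsilon$, which may destroy the increasing order of $\mu$; it then has to pass through Remark~\ref{rem doublystohasctic} to recover majorization, build a comparison frame $\cF'$ via Proposition~\ref{con el la y mayo}, and argue via a strict majorization $\la+\rho\prec\nu$ together with strict convexity. You instead perturb the \emph{endpoints} $i_*,i^*$ of the maximal constant block containing position $i$, which keeps $\tilde\mu$ increasing, so $\la+\tilde\mu$ stays in $\Lambda_\ca^{\rm op}(\la)$ and you can conclude directly from the two-entry convexity inequality $f(a-\varepsilon)+f(b+\varepsilon)<f(a)+f(b)$ (valid since $a=\la_{i_*}+\mu_{i_*}>b=\la_{i^*}+\mu_{i^*}$ for small $\varepsilon$). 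This avoids both the doubly-stochastic digression and the construction of $\cF'$. The one thing worth making explicit is the range of admissible $\varepsilon$: it must be small enough so that (i) $\tilde\mu_{i_*}\geq 0$, (ii) the strict inequalities at the block boundary persist (so $\tilde\mu$ remains increasing), and (iii) $\varepsilon<(a-b)/2$ for the convexity inequality; all three are simultaneously achievable.
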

\begin{proof}
Notice that the map $F:\R_{\geq 0}^d\rightarrow [0 \coma \infty)$
is also strictly convex, and it is invariant under permutations 
of the variables. Moreover, 
\beq\label{ecua house}
P_f(\cF)=\tr\, f(S_\cF)=F(\lambda(S_\cF))
\peso{for every}  \cF\in \cC_\ca(\cF_0) \ .
\eeq
Since $\Lambda_\ca^{\rm op}(\lambda)$ is compact and convex and $F$ is strictly convex 
then every local minimizer of $F$ on $\Lambda_\ca^{\rm op}(\lambda)$ coincide with 
a unique global minimizer denoted by $\nu=\nu(\ca \coma \lambda\coma f)
\in \Lambda_\ca^{\rm op}(\lambda)$. Define $\mu=\nu-\lambda$ and notice that, by construction of the set $\Lambda_\ca^{\rm op}(\lambda)$, $\mu=\mu\ua$ and $\cb\prec\mu$. 

\pausa
Recall that given $\cF=(\cF_0\coma \cG )\in \cC_\ca(\cF_0)$ then a necessary condition for $\cF$ to be a global minimizer of $P_f$ 
on $\cC_\ca(\cF_0)$ is that $\cF\in \cC_\ca^{\rm op}(\cF_0)$ (see Remark \ref{min OP}).
Hence, by item 2 in Theorem \ref{teocafop2}, the fact that $F$ 
is permutation invariant and Eq. \eqref{ecua house} we conclude 
that $\cF\in \cC_\ca(\cF_0)$ is a global minimizer of $P_f$ on 
$\cC_\ca(\cF_0)$ if and only if 
$$
\cF=(\cF_0\coma \cG )\in \cC_\ca^{\rm op}(\cF_0)
\py \lambda(S_\cF)=\big(\, \lambda+
\la^\uparrow(S_{\cG })\, \big)^\downarrow=\nu^\downarrow \ .
$$ 
Denote by  $\rho=\la^\uparrow(S_{\cG })$. 
Then  $\cb\prec\rho =\rho^\uparrow$ and hence 
$\lambda+\rho\in \Lambda_\ca^{\rm op}(\lambda)$ is a minimizer 
of $F \iff \lambda+\rho=\nu \iff \rho=\mu$.

\pausa
Assume now that $0< \mu_i=\mu_{i+1}$ but $\lambda_i>\lambda_{i+1}\, $ for some 
 $ i\in \IN{d-1}\,$. 
We denote by $\rho$ the vector obtained from $\mu$ be replacing the $i$-th and $(i+1)$-th 
entries of $\mu$ by 
\bce 
$\rho_i=\mu_i-\varepsilon$  \ and \ $\rho_{i+1}=\mu_{i+1}+\varepsilon$ , 
 \ where \ $0<\varepsilon < \min\{ \frac{\lambda_i-\lambda_{i+1}}{2}\, , \, \mu_i\}$ .
\ece 
Although it is possible that $\rho \neq \rho ^\uparrow$, the facts that  
$(\mu_i\coma \mu_{i+1})\prec(\rho_i\coma \rho_{i+1})$ and $\mu_j=\rho_j$ 
for every $j\in(\IN{d}\setminus\{i\coma i+1\})$ imply, by Remark \ref{rem doublystohasctic}, 
that $\mu\prec\rho$ and hence $\cb\prec\mu\prec\rho$. Using 
Proposition \ref{con el la y mayo} and fixing an ONB for $S_{\cF_0} \coma \la$,   
we deduce that there 
exists $\cF\,'=(\cF_0 \coma \cG\,')\in \cC_\ca(\cF_0)$ such that $\lambda(S_{\cG\,'})
=\rho^\downarrow$ and $\lambda(S_{\cF\,'})=(\lambda+\rho)^\downarrow$.
Recall that $\nu = \la + \mu$. Note that
$$
\nu_i=\lambda_i+\mu_i> \lambda_i+\rho_i >\lambda_{i+1}+\rho_{i+1} 
>\lambda_{i+1}+\mu_{i+1}=\nu_{i+1}  \ , 
$$ 
while  $\nu_j=\la_j+\mu_j=\la_j+\rho_j$ 
for every $j\in(\IN{d}\setminus\{i\coma i+1\})$.  Then, by 
Remark \ref{rem doublystohasctic}, we conclude that 
$\lambda+\rho\prec\nu$ and $(\lambda+\rho)^\downarrow\neq\nu^\downarrow$. 
Hence, if $f$ is strictly convex the previous facts imply that 
$P_f(\cF\,')<F(\nu)$, which contradicts the minimality of $\nu$ of the first part of this proof.
\end{proof}

\begin{cor}\label{loqueimporta} 
Let $\cF=(\cF_0 \coma \cG )\in \cC_\ca(\cF_0)$ be a global minimizer of $P_f\,$. Then 
\ben
\item If \ $z\in\sigma(S_\cG)\setminus\trivial$ then there exists 
$w\in \sigma(S_0)$ such that $\ker(S_\cG-z)\inc  \ker(S_0-w)$. 
\item In particular, if $P$ denotes a sub-projection of 
the spectral projection $P(z)$ of $S_\cG$ 
onto its eigenspace $\ker(S_\cG-z)$, then $P$ and $S_0$ commute.
\een
\end{cor}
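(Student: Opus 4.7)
The plan is to deduce both items directly from Theorem \ref{teocafop2} (item 3) and the condition \eqref{pegaditos} from Theorem \ref{teo sobre unico espectro}. Since $\cF$ is a global minimizer of $P_f$, both results apply: there is a joint ONB $\{v_i\}_{i \in \IN{d}}$ such that $S_0 v_i = \la_i v_i$ and $S_{\cG} v_i = \mu_i v_i$, where $\la = \la^\downarrow$ and $\mu = \mu^\uparrow$ with $\mu = \mu(\la \coma \ca \coma f)$ the vector given by Theorem \ref{teo sobre unico espectro}.

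To prove item 1, I would fix $z \in \sigma(S_{\cG}) \setminus \trivial$ and consider $I_z = \{i \in \IN{d} : \mu_i = z\}$. Because $\mu$ is sorted in increasing order, $I_z$ is a set of consecutive indices, say $I_z = \{i_0, i_0+1, \dots, i_1\}$. Applying \eqref{pegaditos} to each pair $(i, i+1)$ with $i, i+1 \in I_z$ (which uses $z > 0$), we obtain $\la_i = \la_{i+1}$ throughout $I_z$, so $\la$ is constant on $I_z$ with common value $w \igdef \la_{i_0} \in \sigma(S_0)$. Since $\ker(S_{\cG} - z) = \gen\{v_i : i \in I_z\}$, every such $v_i$ lies in $\ker(S_0 - w)$, and item 1 follows.

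For item 2, given a sub-projection $P$ of $P(z)$, we have $R(P) \subseteq R(P(z)) = \ker(S_{\cG} - z) \subseteq \ker(S_0 - w)$ by item 1. Write $\cH = \ker(S_0 - w) \oplus \ker(S_0 - w)^\perp$; both summands are $S_0$-invariant, with $S_0$ acting as $w \, I$ on the first. Since $R(P) \subseteq \ker(S_0 - w)$, the orthogonal complement satisfies $\ker P = R(P)^\perp \supseteq \ker(S_0 - w)^\perp$, and $\ker P \cap \ker(S_0 - w)$ is automatically $S_0$-invariant (since $S_0$ is scalar there). Hence $\ker P$ is $S_0$-invariant, which gives $P\, S_0 = S_0\, P$.

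There is essentially no obstacle here: the work has been done in Theorem \ref{teocafop2} (existence of the joint diagonalization, which traces back to the optimal matching analysis in the Appendix) and Theorem \ref{teo sobre unico espectro} (the rigidity condition \eqref{pegaditos}). The only subtlety is making sure that the hypothesis $z \neq 0$ is used in invoking \eqref{pegaditos}; without it, one cannot control the $\la_i$ across indices where $\mu_i = 0$, which is precisely why the statement is restricted to $z \in \sigma(S_\cG) \setminus \trivial$.
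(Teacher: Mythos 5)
Your proof is correct and follows essentially the same route as the paper's: pass to $\cC_\ca^{\rm op}(\cF_0)$ (via Remark~\ref{min OP}, which you invoke implicitly), take the joint eigenbasis from Theorem~\ref{teocafop2}(3), and propagate the rigidity condition~\eqref{pegaditos} along the consecutive index block where $\mu_i = z > 0$ to get a common $\la$-value $w$; the commutation in item~2 then follows exactly as in the paper. The only small stylistic difference is that you justify $PS_0 = S_0P$ by checking $S_0$-invariance of $\ker P$ (and, by self-adjointness of $S_0$, of $R(P)$), whereas the paper computes directly that $PS_0 = S_0P = wP$; both are fine.
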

\begin{proof}
By Remark \ref{min OP}, the $P_f\,$-minimality of  $\cF=(\cF_0 \coma \cG )$ in 
$ \cC_\ca(\cF_0)$ implies that 
$\cF\in \cC_\ca^{\rm op}(\cF_0)$. Then,  by Theorem \ref{teocafop2}, 
there exists $\{v_i: i\in \IN{d}\}$ an ONB
 of eigenvectors for $S_0\coma \la=\la(S_0)$  such that Eq. \eqref{la BON2} holds.
Denote by $S_1 = S_\cG\,$, $\mu=\la^\uparrow(S_1)$  and fix $z\in\sigma(S_1)\setminus\trivial$. 
Consider the indexes  
$$
m(z) = \min \{i \in \IN{d}:\mu_i=z\} \py 
M(z) = \max \{i \in \IN{d}:\mu_i=z\}  \ .
$$
By Eq. \eqref{pegaditos} in Theorem \ref{teo sobre unico espectro} 
we know that there exists $w\in\sigma(S_0)$ such 
that $\lambda_i=w$  
for every $m(z)\leq i\leq M(z)$. 
Then, we can use  Eq. \eqref{la BON2} and deduce that 
$$
\ker(S_\cG-z)= \gen \{v_i: \ m(z)\leq i\leq M(z)\} 
\inc  \ker(S_0-w)  \ .
$$ 
Therefore, any projection $P$ as in  item 2 must satisfy that 
$P\cdot S_0=S_0\cdot P=w P$.
\end{proof}

\begin{rem}[First reduction of the optimal CP problem] \ID \ and let 
$f:[0,\infty)\rightarrow \R$ be a strictly convex function. Consider the 
compact convex set $\Lambda_\ca^{\rm op}(\la)\inc \R_{\geq 0}^d$ and 
define the strictly convex function $F:\Lambda_\ca^{\rm op}(\la)\rightarrow \R$. 
Therefore $F$ is continuous and hence 
\beq\label{convex prog} 
\exists \, ! \ \ \text{argmin}  \ \{ F(x) : {x\in \Lambda_\ca^{\rm op}(\la)}\}
=\nu
\eeq
Theorem \ref{teo sobre unico espectro} states that
$\mu(\la\coma \ca\coma f)=\nu-\la$. 
Thus, $\cF=(\cF_0 \coma \cG)\in \cC_\ca^{\rm op}(\cF_0)$ is an optimal completion 
with respect to $P_f$ if and only if $\la(S_\cG)=\nu-\la$. Thus, the minimization 
problem in Eq. \eqref{convex prog} constitutes a reduction of the optimization 
problem, that in turn can be 
attacked with several numerical tools in concrete examples. Notice that 
$\Lambda_\ca^{\rm op}(\la)$ has a natural and explicit description in terms 
of majorization, which is an algorithmic notion.

\pausa
In the next sections we develop a different approach to the computation 
of minimizers of $P_f$ in $\cC_\ca(\cF_0)$ (see Section \ref{sec consec conjetur}).
\EOE
\end{rem}

\section{Local minimizers of $P_f(\cdot)$ on $\cC_\ca^{\rm op}(\cF_0)$}\label{Sec4}

In applied situations it is quite useful to understand the structure of local minimizers of objective functions. In our case, the study of local minimizers allows us to give a detailed description of the geometrical structure of global minimizers. 
We shall consider two different topologies on the set 
$\cC_\ca(\cF_0)$. On the one hand, we consider the 
pseudo-metric $d_S$ given by 
$$
d_S(\cF \coma \cF\, ')=\|S_\cF-S_{\cF\, '}\|  \ ,
$$ 
where $\|\cdot\|$ denotes the spectral norm on $\mat$. 
On the other hand, we also consider the punctual metric $d_P$ given by 
$$
d_P(\cF\coma \cF\, ')=\|T_\cF-T_{\cF\, '}\| \ , 
$$ 
where as before $\|\cdot \|$ denotes the spectral norm. It is clear that the 
topology induced by $d_P$ is strictly stronger in the sense that: 
if $\cF_n\xrightarrow[n]{d_P}\cF$ then $\cF_n\xrightarrow[n]{d_S}\cF$, 
while the converse is false. Hence, $d_S$-local minimizers are 
also $d_P$-local minimizers. 

\pausa
Let $f:[0 \coma \infty) \rightarrow [0 \coma \infty)$ be a strictly 
convex function. Recall from Remark \ref{min OP} that  global minimizers 
of $P_f$ on $\cC_\ca(\cF_0)$ actually lie in $\cC_\ca^{\rm op}(\cF_0)$.  
Therefore we shall focus our interest in the geometrical and spectral structure of local minimizers $\cF\in \cC_\ca^{\rm op}(\cF_0)$ of $P_f\,$.

\subsection{The $d_S$-local minimizers on $\cC_\ca^{\rm op}(\cF_0)$ are global minimizers}

\begin{rem}\label{curvas de autovectores}
Let $\cB_1= \{u_j\}_{j\in \IN{d}}$ and $\cB_2= 
\{v_j\}_{j\in \IN{d}}$ be two ONB for $\C^d$. 
Then there exist  continuous curves 
$w_j:[0,1]\rightarrow \C^d$ ($j\in \IN{d}$)
such that $w_j(0)=v_j \coma w_j(1)=u_j$ and such that 
$\{w_j(t)\}_{i\in \IN{d}}$ is an ONB for $\C^d$ 
for every  $t\in [0,1]$. 

\pausa
In fact, given the unitary matrix $U\in\matud$ such that 
$U\, v_j = u_j$ for every $j\in \IN{d}\,$, 
there exists a unique $X\in \gld$ with  $\|X\|\leq 2\,\pi$ such that $e^{\,i\,X}=U$. 
Hence the continuous curve $\gamma_U:[0,1]\rightarrow \matud$ given by 
$\gamma_U(t)=e^{\,i\,t\,X}$ joins $\gamma_U(0)=I$ with $\gamma_U(1)=U$. Thus, the continuous 
curves $w_j(t)=\gamma_U(t)\,v_j$ enjoy the mentioned  properties. 

\pausa
Assume further that there exists $S\in \matpos$ such 
that both $\cB_1 $ and $\cB_2 $ 
are ONB of eigenvectors for $S \coma \la = \la(S)$. 
Hence, by Eq. \eqref{BON S la} we have that 
$$
S=\sum_{i\in \IN{d}}\lambda_i \ u_i\otimes u_i
=\sum_{i\in \IN{d}}\lambda_i \ v_i\otimes v_i\ .
$$ 
In this case, it is easy to see that 
the unitary $U \in \matud$ such that $U\, v_j = u_j$ for every $j\in \IN{d}\,$ should also satisfy 
that $S\, U = U\,S$ and that $\ga_U(t)\, S = S \, \ga_U(t)$ for every $t \in [0,1]$.

\pausa
Then the continuous curves $w_i:[0,1]\rightarrow \C^d$ previously constructed 
also satisfy that the basis 
$\{w_i(t)\}_{i\in \IN{d}}$ is an ONB of eigenvectors for $S$, $\lambda$, for every  $t\in [0,1]$.
In other words, for every $t\in [0,1]$ we have the identity 
\beq
S= \ga_U(t)\, S \, \ga_U(t)^* = 
\sum_{i\in \IN{d}}\lambda_i \ \ga_U(t)\, v_i\otimes \ga_U(t)\, v_i = 
\sum_{i\in \IN{d}}\lambda_i \ w_i(t)\otimes w_i(t) \ . 
 \EOEP
 \eeq
\end{rem}

\begin{teo}\label{ds min loc}
\ID \ and fix  a strictly convex function
 $f:[0\coma \infty)\to [0\coma \infty)$. 
Then every $d_S$-local minimizer of $P_f$ on $\cC_\ca^{\rm op}(\cF_0)$
must be a {\bf global} minimizer. 
\end{teo}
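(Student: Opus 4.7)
The plan is to show, by contradiction, that if $\cF = (\cF_0, \cG)$ is a $d_S$-local minimizer in $\cC_\ca^{\rm op}(\cF_0)$ that is not global, then we can construct a continuous (in $d_S$) curve of completions in $\cC_\ca^{\rm op}(\cF_0)$ starting at $\cF$ along which $P_f$ strictly decreases. The construction is naturally suggested by the convex-analytic picture of Theorem \ref{teo sobre unico espectro}: since $P_f(\cF)=F(\la(S_\cF))$ and $F$ is strictly convex on the convex set $\Lambda_\ca^{\rm op}(\la)$ with unique minimum at $\nu^*=\la+\mu^*$, where $\mu^*=\mu(\la,\ca,f)$, moving the spectrum $\la+\mu_\cF$ towards $\nu^*$ along a straight segment must strictly decrease $F$.

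More precisely, let $\mu_\cF = \la^\uparrow(S_\cG)\in \Gamma_d(\cb)$ and assume for contradiction $\mu_\cF\neq \mu^*$. By Theorem \ref{teocafop2}(3) there is an ONB $\{v_i\}_{i\in\IN d}$ of eigenvectors of $S_{\cF_0}$ adapted to $\la$ such that $S_\cG=\sum_i \mu_{\cF,i}\, v_i\otimes v_i$. For $t\in[0,1]$ define the increasing vector $\rho(t) = (1-t)\mu_\cF + t\mu^*\in \Gamma_d(\cb)$ (the set $\Gamma_d(\cb)$ is convex, and a convex combination of two increasing vectors is increasing) and the operator
\begin{equation*}
S(t) \ = \ \sum_{i\in\IN d} \rho_i(t)\, v_i\otimes v_i\ \in \matpos\ ,
\end{equation*}
which is continuous in $t$ and satisfies $\la^\uparrow(S(t))=\rho(t)$, hence $\cb\prec\la^\uparrow(S(t))$. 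By Proposition \ref{frame mayo} there exists a sequence $\cG(t)=\{g_i(t)\}_{i\in\IN k}$ with $S_{\cG(t)}=S(t)$ and $\|g_i(t)\|^2=\alpha_i$; set $\cF(t)=(\cF_0,\cG(t))\in \cC_\ca(\cF_0)$.

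The key verification is that $\cF(t)\in \cC_\ca^{\rm op}(\cF_0)$. Since $S_{\cF_0}$ and $S(t)$ are simultaneously diagonalized by $\{v_i\}$ with respective eigenvalues $\la_i$ (decreasing) and $\rho_i(t)$ (increasing) on the same basis vector, they commute and $\la(S_{\cF(t)})$ equals the multiset $\{\la_i+\rho_i(t)\}$ reordered decreasingly, which is precisely $(\la^\downarrow+\rho(t)^\uparrow)^\downarrow = (\la + \la^\uparrow(S_{\cG(t)}))^\downarrow$; this is the defining condition of $\cC_\ca^{\rm op}(\cF_0)$ (see Eq. \eqref{eqcafop}). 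Thus $\cF(t)\in \cC_\ca^{\rm op}(\cF_0)$ for every $t\in[0,1]$, and $S_{\cF(t)}=S_{\cF_0}+S(t)$ depends continuously on $t$, so $d_S(\cF(t),\cF)\to 0$ as $t\to 0^+$.

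It remains to compute $P_f(\cF(t))$. Using permutation invariance of $F$ together with the commutation above,
\begin{equation*}
P_f(\cF(t)) \ = \ F(\la(S_{\cF(t)})) \ = \ F(\la+\rho(t)) \ = \ F\bigl((1-t)(\la+\mu_\cF) + t(\la+\mu^*)\bigr)\ .
\end{equation*}
Both endpoints lie in the convex set $\Lambda_\ca^{\rm op}(\la)$, $F$ is strictly convex there, and $\la+\mu^*$ is its unique minimizer by Theorem \ref{teo sobre unico espectro}(2). Since $\mu_\cF\neq \mu^*$, the function $t\mapsto F(\la+\rho(t))$ is strictly decreasing on $[0,1]$, so $P_f(\cF(t))<P_f(\cF)$ for every $t\in(0,1]$, contradicting the $d_S$-local minimality of $\cF$. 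Hence $\mu_\cF=\mu^*$, which by Theorem \ref{teo sobre unico espectro}(1) means $\cF$ is a global minimizer. The main subtlety is the verification that $\cF(t)$ actually lies in $\cC_\ca^{\rm op}(\cF_0)$ (i.e., the Lindskii equality holds along the curve), which is secured by the fact that the interpolating perturbation is carried out in the eigenbasis of $S_{\cF_0}$ with increasing eigenvalues, so $S(t)$ is an optimal matching matrix of $S_{\cF_0}$ for every $t$.
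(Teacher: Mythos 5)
Your argument is correct and follows essentially the same strategy as the paper: linearly interpolate the eigenvalue vector of the completing operator towards the optimum in a common eigenbasis of $S_{\cF_0}$, check this stays inside $\cC_\ca^{\rm op}(\cF_0)$, and invoke strict convexity of $F$ on the convex set $\Lambda_\ca^{\rm op}(\la)$.

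The one genuine (if minor) difference is that the paper additionally rotates the eigenbasis along the curve (Remark \ref{curvas de autovectores}) so that the endpoint $\s(1)$ coincides with the frame operator of a \emph{specific} global minimizer $\cF\,'$, and then concludes $h(0)=h(1)$ by convexity. You skip that: you fix the eigenbasis given by Theorem \ref{teocafop2}(3) for the local minimizer $\cF$, interpolate only the spectral coefficients, realize each $S(t)$ as a frame operator of a bona fide completion via Proposition \ref{frame mayo}, and deduce strict decrease. Since $P_f$ depends on the completion only through $\la(S_\cF)$ and $d_S$ only through $S_\cF$, nothing is lost by not hitting the particular $\cF\,'$, and the basis-rotation step becomes unnecessary. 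This makes your version slightly more economical; the trade-off is that the paper's construction yields an actual continuous path of \emph{operators} joining the two minimizers, which is the more informative object if one wants to go on (as the paper does in the subsequent remark) to discuss the lifting problem for general $\cC_\ca(\cF_0\coma\mu)$.
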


\begin{proof}
Let $\cF\,' =(\cF_0 \coma \cG\,' )$ be a global minimizer of 
$P_f$ in $\cC_\ca^{\rm op}(\cF_0)$, so that $\la^\uparrow(S_{\cG\,'})=\mu
= \mu(\lambda \coma \ca\coma f)$  
the vector of Theorem \ref{teo sobre unico espectro}. 
On the other hand take  $\cF=(\cF_0 \coma \cG )\in\cC_\ca^{\rm op}(\cF_0)$  
a $d_S$-local minimizer of $P_f$ on $\cC_\ca^{\rm op}(\cF_0)$. 
We denote by $ S_{\cF_0}= S_0\,$,  $\lambda(S_0)=\lambda$ and $\la^\uparrow(S_\cG)=\rho$.
Then $\mu \coma \rho\in  \Gamma_d(\cb) $  and by Theorem \ref{teocafop2} 
(applied to both $\cF$ and $\cF\,'$) there exist 
two ONB's $\{ u_i:\ i\in \IN{d}\}$ and $\{ v_i:\ i\in \IN{d}\}$
such that 
\beq \label{ecuac1}
\barr{rl}
S_0& =\suml_{i\in \IN{d}}\lambda_i\ u_i\otimes u_i= \suml_{i\in \IN{d}}\lambda_i\ v_i
\otimes v_i\quad , \quad   S_{\cF\,'}=\suml_{i\in \IN{d}}(\lambda_i+\mu_i)\ u_i\otimes u_i\ ,  \\&\\
& \py   S_{\cF}=\suml_{i\in \IN{d}}(\lambda_i+\rho_i)\ v_i\otimes v_i\ .
\earr
\eeq  
Therefore, by Remark \ref{curvas de autovectores}, 
there exists a family of continuous curves 
$w_i:[0,1]\rightarrow \hil$  such that 
$w_i(0)=v_i\,$ and  $w_i(1)=u_i$ for every $i\in \IN{d}$ and such that 
$\{w_i(t):\ i\in\IN{d}\}$ is an ONB for $S_0$ and $\la$ 
for every $t\in [0,1]$.
Define the continuous curve $\s:[0,1]\rightarrow\matpos$ given by
$$ 
\barr{rl}
\s(t)& =\suml_{i\in\IN{d}} (\lambda_i+t\cdot \mu_i+(1-t)\cdot \rho_i) \ w_i(t)\otimes w_i(t)\\&\\
& =S_0+ \suml_{i\in\IN{d}} (t\cdot \mu_i+(1-t)\cdot \rho_i) \ w_i(t)\otimes w_i(t) \peso{for every}  t\in [0,1]\ . \earr
$$ 
It is clear that $\s(0)=S_{\cF}$ and 
$\s(1)=S_{\cF\,'}$. 
We claim that $\s(t)\in \cS(\cC_\ca^{\rm op}(\cF_0))$ 
for every $t\in[0,1]$. Indeed, notice that 
$(t\cdot \mu+(1-t)\cdot \rho)^\uparrow=t\cdot \mu+(1-t)\cdot \rho$ 
and therefore $\cb\prec(t\cdot \mu+(1-t)\cdot \rho)$ 
for $t\in [0,1]$. Hence, there is a map $\s_1:[0,1]\to 
\matpos$ such that for every $t\in [0,1]$
$$
\s(t)=S_0+\s_1(t) \ \ , \ \ \ \cb\prec\lambda(\s_1(t)) 
\py \lambda(\s(t))=\big(\lambda(S_0)+\la\ua(\s_1(t)\,)\,\big)\da\ .
$$
These last facts prove our claim. Notice that then $h(t)=P_f(\s(t))\, , \ t\in[0,1]$ is a strictly convex function that has local minima 
at $t=0$ and $t=1$, i.e. $h$ is constant. 
Then $P_f(\cF)=h(0)= h(1)= P_f(\cF\,')$, and $\cF$ 
is another global minimizer. 
\end{proof}

\begin{rem}[On $d_S$-local minimizers in $\cC_\ca(\cF_0 \coma \mu )$ and 
$\cC_\ca(\cF_0)$ and a lifting problem]  
The previous result raises the question about the spectral structure of 
$d_S$-local minima of $P_f$ on $\cC_\ca(\cF_0 \coma \mu )$ or on $\cC_\ca(\cF_0)$. 
Indeed, let $\cb\prec \mu =\mu ^\uparrow$ and consider $\cF$ a $d_S$-local minimizer 
in $\cC_\ca(\cF_0 \coma \mu )$. As shown in Theorem \ref{dale que va} the set 
$\Lambda(\cC_\ca(\cF_0 \coma \mu ))$ is convex. Therefore 
$\lambda(t)=t\cdot \lambda(S_\cF)+(1-t)\cdot (\lambda_0+\mu )^\downarrow\in 
\Lambda(\cC_\ca(\cF_0 \coma \mu ))$ for $t\in [0,1]$ is a continuous curve. 

\pausa
Assume that we can lift the curve $\lambda(\cdot)$ to a curve 
in $\cS( \cC_\ca(\cF_0 \coma \mu ))$ i.e., assume that there exists a 
continuous curve 
\beq \label{eclift} 
\s :[0,1]\rightarrow \cS( \cC_\ca(\cF_0 \coma \mu )) 
\peso{ such that } \lambda(\s(t))=\lambda(t)\ .
\eeq  	 
Then, we could argue as in Theorem \ref{ds min loc} above and 
conclude that $\lambda(S_\cF)= (\lambda_0+\mu )^\downarrow$, 
which in turn would also imply that $d_S$-local minimizers 
of $P_f$ on $\cC_\ca(\cF_0)$ are also global minimizers. 
Although we conjecture that the lifting problem of 
Eq. \eqref{eclift} has a solution, we are not able 
to show that such a solution exists at this time.
\EOE
\end{rem}

\subsection{A geometrical approach for $d_P$-local minimizers on $\cC_\ca^{\rm op}(\cF_0)$}

In what follows we consider a geometrical approach to the study of $d_P$-local minimizers. Our results are based on
a perturbation result for finite sequences of vectors, which follows from the work in \cite{MRS}. In order to describe the general setting, we begin by considering some well known facts from differential geometry. In what follows we consider the unitary group of a complex and finite dimensional inner product space $\cR$, denoted $\cU(\cR)$, together with its natural differential geometric (Lie) structure. 

\pausa
\ID. Fix $\cF=(\cF_0 \coma \cG )
= \{f_i\}_{i=1}^{n}\in \cC_\ca(\cF_0)$, where $n=k+\n0\,$, 
$\cR =R(S_{\cG }) = \gen \{\cG \} \inc\C^d$, and    
$\tau=\tr \,\cb=\sum_{i=1}^{k}\alpha_i>0$. Consider the real vector space 
\beq\label{poRt}
\hil_d(\cR)^\tau=\{S\in\matsad:\ R(S)\inc\cR \, , \ \tr \,S=\tau\} \ , 
\eeq
the cone $\poRt = \hil_d(\cR)^\tau \cap \matpos$, 
and the affine manifold 
$$
S_{\cF_0}+\hil_d(\cR)^\tau=\{S_{\cF_0}+S:\ S\in\hil_d(\cR)^\tau \}\inc\matsad\ .
$$
We define the smooth (and so $d_P$-continuous) map 
\beq\label{La Phi}
\Phi_\cF:\cU(\cR)^k\rightarrow \cC_\ca(\cF_0) 
\inc \cH^n \peso{given by} \Phi_\cF(U_i)_{i=1}^k
=\{f_i\}_{i=1}^{\n0}\cup \{ U_i f_{i+\n0}\}_{i=1}^k\ .
\eeq
Finally, we consider the smooth map $\Psi_\cF:\cU(\cR)^k\rightarrow S_{\cF_0}+\hil_d(\cR)^\tau$ given by 
\beq\label{defi Psi}
\Psi_\cF(U_i)_{i=1}^k =S_{\cF_0}+\sum_{i=\n0+1}^n U_if_i\otimes U_if_i=S_{\cF\, '} 
\peso {where} \cF\, '=\Phi_\cF(U_i)_{i=1}^k \ .
\eeq 
Let us denote by $I^k=(I,\ldots,I)\in \cU(\cR)^k$. 
It turns out that in several cases (indeed, in a generic case) the map $\Psi_\cF$ is an open map (in $S_{\cF_0}+\hil_d(\cR)^\tau$) around  
$\Psi_\cF(I^k)=S_\cF\,$. In order to characterize this situation we introduce the following notion. 

\pausa

\begin{fed}\label{irre} \rm
Given a sequence $\cG=\{g_i\}_{i\in \IN{n}}$ in $\C^d$ we say that 
$\cG$ is  {\bf irreducible} if it can not be partitioned into two mutually 
orthogonal subsequences. 
\EOE
\end{fed}

\begin{rem}
\ID. 
Fix $\cF=(\cF_0 \coma \cG )\in \cC_\ca(\cF_0)$. Denote by  $n=k+\n0$ and 
$\cR =R(S_{\cG }) = \gen \{\cG \} \inc\C^d$.  
Consider the map $\Psi_\cF:\cU(\cR)^k\rightarrow S_{\cF_0}+\hil_d(\cR)^\tau$ defined 
in Eq. \eqref{defi Psi}.

\pausa
In \cite{MRS} we have characterized when the map $\Psi_\cF$ is a submersion in terms of certain commutant. Indeed, let $L_d(\cR)$ denote the (non unital) $*$-subalgebra of $\mat$ that contains all $T\in\mat$ such that $T=P_\cR\, T\, P_\cR\,$, where $P_\cR$ denotes the orthogonal projection onto $\cR$. Then, an immediate application of 
\cite[Theorem 4.2.1.]{MRS} shows that $\Psi_\cF$ is a submersion if and only if the 
local commutant  
\beq\label{def mf1}
\mathcal M(\cG )\igdef  \{f_i\otimes f_i:\ \n0+1\leq i\leq n\}\, ' \cap  L_d(\cR) 
\eeq
is trivial, i.e. $\mathcal M(\cG )=\C\cdot P_\cR \,$. 
Equivalently, $\Psi_\cF$ is a submersion iff any 
$A\in  L_d(\cR)$ such that $A\,f_i=a_i\,f_i$ for some $a_i\in \C$, $\n0+1\leq i\leq n$, 
must be  $A=a\,P_\cR$ for some $a\in \C$. It is straightforward to show that this last condition on the family $\{f_i\otimes f_i:\ \n0+1\leq i\leq n\}$ is equivalent to the fact that the sequence $\cG $ is irreducible, in the sense defined above.
Thus, we have proved the following statement:
\EOE
\end{rem}

\begin{pro}\label{teo inmersion} \rm
\ID. 
Fix $\cF=(\cF_0 \coma \cG )\in \cC_\ca(\cF_0)$. Denote by  $n=k+\n0$ and 
$\cR =R(S_{\cG }) = \gen \{\cG \} \inc\C^d$.  Then the following statements are equivalent:
\begin{enumerate}
\item The map $\Psi_\cF$ of Eq. \eqref{defi Psi} is a submersion at $I^k\in\cU(\cR)^k$.
\item The sequence $\cG $ is irreducible.
\end{enumerate} In this case, the image of $\Psi_\cF$ contains an open neighborhood of $\Psi_\cF(I^k)=S_\cF$ in $S_{\cF_0}+\hil_d(\cR)^\tau$. Hence, $\Psi_\cF$ admits a smooth 
local cross section $\psi$ around $S_\cF$ such that $\psi(S_\cF) =I^k\,$. 
\qed
\end{pro}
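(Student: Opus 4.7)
The plan is to chain two reductions. First, I would invoke \cite[Theorem 4.2.1]{MRS} (as recalled in the remark preceding the proposition) to translate the submersion condition on $\Psi_\cF$ at $I^k$ into the algebraic condition that the local commutant
$\mathcal M(\cG) = \{f_i \otimes f_i : \n0+1 \leq i \leq n\}' \cap L_d(\cR)$
reduces to $\C \cdot P_\cR$. Second, I would show that this triviality is equivalent to the irreducibility of $\cG$ in the sense of Definition \ref{irre}. Once both equivalences are in hand, the final assertions---that the image of $\Psi_\cF$ contains an open neighborhood of $\Psi_\cF(I^k)=S_\cF$ in $S_{\cF_0}+\hil_d(\cR)^\tau$, and that a smooth local cross section $\psi$ with $\psi(S_\cF)=I^k$ exists---follow from the standard local rank-normal form for submersions between smooth manifolds, which I would invoke without proof.

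To carry out the algebraic step, I would unpack the commutation relation for an arbitrary $A\in L_d(\cR)$. Since $(f_i\otimes f_i)(x)=\langle x,f_i\rangle f_i$, the identity $A(f_i\otimes f_i)=(f_i\otimes f_i)A$ forces $A f_i = a_i f_i$ and $A^* f_i = b_i f_i$ for scalars $a_i,b_i\in\C$ (this is obtained by testing on $x=f_i$ and on $x\perp f_i$ respectively, using that $\|f_i\|^2=\alpha_i>0$), with the additional relation $a_i=\overline{b_i}$. Expanding $\langle Af_i,f_j\rangle$ in two ways, once directly and once via $A^*f_j=b_j f_j$, then yields
$a_i\langle f_i,f_j\rangle = a_j \langle f_i,f_j\rangle$,
so that $a_i=a_j$ whenever $\langle f_i,f_j\rangle\ne 0$.

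With this reformulation the two directions become routine. For $(2)\Rightarrow(1)$, I would consider the graph on $\{\n0+1,\dots,n\}$ whose edges are the pairs $\{i,j\}$ with $\langle f_i,f_j\rangle\ne 0$; irreducibility of $\cG$ forces this graph to be connected (otherwise its components would induce a nontrivial orthogonal partition of $\cG$), so all $a_i$ collapse to a single scalar $a$, and $A=aP_\cR$ since $A=P_\cR A P_\cR$ acts as the scalar $a$ on $\gen\{\cG\}=\cR$. Conversely, for $(1)\Rightarrow(2)$ I would argue by contrapositive: a nontrivial splitting $\cG=\cG_1\sqcup\cG_2$ with $\gen\{\cG_1\}\perp\gen\{\cG_2\}$ produces the orthogonal projection $P_1\in L_d(\cR)$ onto $\gen\{\cG_1\}$, which satisfies $P_1 f_i\in\{0,f_i\}$ for each $i$ and hence commutes with every $f_i\otimes f_i$, witnessing a non-scalar element of $\mathcal M(\cG)$. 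The main (mildly delicate) step is the unwinding of the commutation relation into simultaneous eigenvector conditions for $A$ and $A^*$, since this is what bridges the operator-theoretic commutant picture and the combinatorial notion of irreducibility; everything else is either a short graph-connectivity argument or a quote from \cite{MRS} and from standard submersion theory.
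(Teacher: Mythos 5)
Your proposal is correct and follows exactly the route laid out in the paper: it quotes \cite[Theorem 4.2.1]{MRS} to translate the submersion condition into triviality of the local commutant $\mathcal M(\cG)$, then shows this is equivalent to irreducibility, and closes by citing standard submersion theory for the local cross section. The paper packages all of this into the remark just preceding the proposition and dismisses the commutant-to-irreducibility step as ``straightforward to show''; you have filled in that step cleanly, with the correct unwinding of $A(f_i\otimes f_i)=(f_i\otimes f_i)A$ into $Af_i=a_i f_i$, $A^*f_i=\overline{a_i}\,f_i$, the compatibility $a_i=a_j$ when $\langle f_i,f_j\rangle\neq 0$, and the graph-connectivity argument in both directions.
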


\pausa 
Next we state a convenient reformulation of Proposition \ref{teo inmersion}, 
in terms of the distance $d_P\,$.

\begin{cor}\label{coro sec locales dp}
\ID. Consider the smooth map 
\beq\label{La S}
\cS:\cC_\ca(\cF_0)\rightarrow S_{\cF_0}+\hil_d(\cR)^\tau 
\peso{given by} \cS(\cF)=S_{\cF}
=S_{\cF_0}+ S_{\cG} 
 \eeq
for every $\cF=(\cF_0 \coma \cG )\in \cC_\ca(\cF_0)$. 
If we assume that a point $\cF=(\cF_0 \coma \cG )\in \cC_\ca(\cF_0)$
satisfies that the sequence $\cG $ is {\bf irreducible}, then 
\begin{enumerate}
\item The image of $\cS$ contains an open neighborhood of $S_\cF\,$
in $S_{\cF_0}+\hil_d(\cR)^\tau \,$.
\item The map $\cS$ has a $d_P$-continuous local cross section $\varphi$ 
around $ S_\cF$ such that $\varphi(S_\cF)=  \cF\,$.
\end{enumerate} 
\end{cor}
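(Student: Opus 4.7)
The plan is to derive the corollary as a direct translation of Proposition \ref{teo inmersion} via the factorization $\cS \circ \Phi_\cF = \Psi_\cF$, which is immediate from the definitions in Eqs. \eqref{La Phi}, \eqref{defi Psi} and \eqref{La S}. Since $\cG$ is irreducible by hypothesis, Proposition \ref{teo inmersion} applies and gives us two objects to recycle: (i) an open neighborhood $V \subseteq S_{\cF_0} + \hil_d(\cR)^\tau$ of $S_\cF$ contained in the image of $\Psi_\cF$, and (ii) a smooth local cross section $\psi: V \to \cU(\cR)^k$ of $\Psi_\cF$ with $\psi(S_\cF) = I^k$.

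First I would dispatch item 1. Since $\cS \circ \Phi_\cF = \Psi_\cF$, the image of $\cS$ contains the image of $\Psi_\cF$, which by Proposition \ref{teo inmersion} contains $V$. Hence $V$ is the required open neighborhood of $S_\cF$ in $S_{\cF_0} + \hil_d(\cR)^\tau$ inside $\cS(\cC_\ca(\cF_0))$.

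For item 2 I would define the candidate cross section by composition:
\begin{equation}
\varphi \igdef \Phi_\cF \circ \psi : V \longrightarrow \cC_\ca(\cF_0) \ .
\end{equation}
Then $\cS \circ \varphi = \cS \circ \Phi_\cF \circ \psi = \Psi_\cF \circ \psi = \mathrm{id}_V$, so $\varphi$ is indeed a local cross section of $\cS$. Moreover $\varphi(S_\cF) = \Phi_\cF(\psi(S_\cF)) = \Phi_\cF(I^k) = \cF$, which gives the required normalization. The $d_P$-continuity of $\varphi$ is the only subtlety: since $\psi$ is smooth (hence continuous into $\cU(\cR)^k$) and $\Phi_\cF$ is defined coordinatewise by $(U_i)_{i=1}^k \mapsto \{f_i\}_{i=1}^{\n0}\cup\{U_i f_{i+\n0}\}_{i=1}^k$, small perturbations of the unitaries $U_i$ produce small perturbations of each vector $U_i f_{i+\n0}$ in norm, and hence small perturbations of the analysis operator $T_{\Phi_\cF(U_i)_{i=1}^k}$ in the spectral norm. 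Thus $\Phi_\cF$ is $d_P$-continuous, and so is the composition $\varphi$.

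I do not foresee a real obstacle here: the content of the corollary is entirely carried by Proposition \ref{teo inmersion}, and the only verification beyond that proposition is the elementary continuity check for $\Phi_\cF$ in the punctual metric $d_P$, which reduces to the observation that the map $U \mapsto U f$ from $\cU(\cR)$ to $\cH$ is Lipschitz with constant $\|f\|$.
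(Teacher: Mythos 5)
Your proof is correct and follows exactly the same route as the paper: both define the cross section as $\varphi = \Phi_\cF \circ \psi$ with $\psi$ the smooth local section of $\Psi_\cF$ from Proposition \ref{teo inmersion}, and both rely on the factorization $\cS \circ \Phi_\cF = \Psi_\cF$. Your version is simply more explicit than the paper's one-line argument, in particular about the $d_P$-continuity of $\Phi_\cF$ (Lipschitz estimate on each coordinate $U \mapsto U f_{i+\n0}$), but there is no substantive difference.
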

\begin{proof}
Just define the $d_P$-continuous  local cross section  
$\varphi = \Phi_\cF \circ \psi$, 
where $\psi$ is the smooth local cross section for $\Psi_\cF$  of 
Proposition  \ref{teo inmersion} and $\Phi_\cF$ is the map  of Eq. \eqref{La Phi}. 
\end{proof}

\subsection{The $d_P$-local minimizers of $P_f$ in 
$\cC_\ca^{\rm op}(\cF_0)$ are frames for $\hil$}\label{sec4.3}

\begin{fed} \rm
Let $\cF=\{f_i\}_{i\in \IN{k}}\inc \cH^k$. 
A {\bf partition of $\cF$ into irreducible subsequences} is a family 
$\{\cF_i\}_{i\in\IN{p}}$ given by a partition 
$\Pi=\{J_i\}_{i\in \IN{p}}$ of the index set $\IN{k}$ in such 
a way that each $\cF_i=\{f_j\}_{j\in J_i}\,$ satisfies that: 
\bit
\item The subspaces $W_i =\gen \{\cF_i\}$ ($i\in \IN{p}$) are mutually orthogonal. 
\item Each subfamily $\cF_i$ ($i\in \IN{p}$) is irreducible. \EOE
\eit
\end{fed}

\pausa
Notice that any sequence $\cF=\{f_i\}_{i\in \IN{k}}\inc \cH^k$ has a unique such partition. 
To see this, consider the subspace 
$\cR = \gen \{\cF\} \inc\C^d$ and the (non-unital) $*$-subalgebra 
$\cM(\cF) =\{f_i\otimes f_i:\ i \in \IN{k}\}\, ' \cap  L_d(\cR) $.  
If $\cF$ is not irreducible,  then $\cM(\cF)$ contains a unique sequence of minimal 
orthogonal projections $\{Q_i\}_{i\in\IN{p}}$ such that $Q_i\,Q_j=0$ 
for $i,\,j\in \IN{p}$ such that $i\neq j$ and $\sum_{i\in\IN{p}}Q_i=P_\cR\,$. 
In this case,  we have that 
$$
Q_i\,f_j=\varepsilon(i,j) \,f_j  \peso{for every  \ $ i\in\IN{p}$\  and \ $j\in \IN{k}$ ,}
$$ 
where $\varepsilon(i,j)\in\{0,1\}$. 
Let $J_i=\{j\in \IN{k}:\ \varepsilon(i,j)=1 \}$ for $i\in \IN{p}\,$. Let $\Pi=\{J_i\}_{i\in\IN{p}}$.  The fact that 
$\sum_{i\in\IN{p}}Q_i=P_\cR$ implies that $\Pi$ is a partition of $\IN{p}\,$. 
The fact that  $\{Q_i\}_{i\in\IN{p}}$ is a family of mutually orthogonal 
projections imply that the subspaces $W_i= \gen \{f_j:\ j\in J_i\} =R(Q_i)$ 
are mutually orthogonal, while the fact that each $Q_i$ 
is a minimal projection in $\cM(\cF)$ implies that each 
$\cF_i=\{f_j\}_{j\in J_i}$ is irreducible. Then 
$\Pi=\{J_i\}_{i\in\IN{p}}$ has the desired properties.

\begin{lem}\label{sin F0}
Let $f:[0 \coma \infty)\rightarrow [0 \coma \infty)$ be a strictly convex function and let
$\{a_i\}_{i\in \IN{n}}\in \R_{>0}^n$ for some $n\geq d$. If $\cF=\{f_i\}_{i\in \IN{n}}$ is a $d_P$-local minimizer of $P_f$ in the set 
$$
\mathcal B(\ca)=\{\cG=\{g_i\}_{i\in \IN{n}}\in \hil^n: \ \|g_i\|^2=a_i\, , \ i\in \IN{n}\} \ ,
$$ 
then $\cF$ is a frame for $\hil$.
\end{lem}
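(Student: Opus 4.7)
We argue by contradiction. Suppose $\cF=\{f_i\}_{i\in \IN{n}}\in \mathcal B(\ca)$ is a $d_P$-local minimizer of $P_f$ but \emph{not} a frame for $\hil$. Set $\cR\igdef \gen\{\cF\}$, so that $r\igdef \dim\cR<d$ and $\cR^\perp\neq \trivial$. The plan is to construct a $d_P$-nearby $\cF(t)\in \mathcal B(\ca)$ with $P_f(\cF(t))<P_f(\cF)$.

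The first step is a first-order necessary condition. Since $\mathcal B(\ca)$ is a product of spheres, for any $p\in \IN{n}$ and any unit $h\in \hil$ with $h\perp f_p$, the curve $s\mapsto \cos(s)f_p+\sin(s)\|f_p\|h$ (with the other vectors fixed) is a smooth tangent path in $\mathcal B(\ca)$. Differentiating $P_f$ along this path at $s=0$ yields $\Re \api f'(S_\cF)f_p,h\cpi=0$ for every such $h$, so $f_p$ must be an eigenvector of $f'(S_\cF)$. Because $f$ is strictly convex, $f'$ is strictly increasing, so the eigenspaces of $f'(S_\cF)$ and $S_\cF$ agree on $R(S_\cF)$; the identity $\api S_\cF f_p,f_p\cpi\geq \|f_p\|^4>0$ excludes $f_p\in \ker S_\cF$, and hence each $f_p$ is an eigenvector of $S_\cF$ for a positive eigenvalue $\mu_p$. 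The formula $\mu_p\|f_p\|^2=\api S_\cF f_p,f_p\cpi=\sum_j |\api f_p,f_j\cpi|^2\geq \|f_p\|^4$ gives $\mu_p\geq \|f_p\|^2$, with equality iff $f_p\perp f_j$ for every $j\neq p$. If this equality held for all $p$, the sequence $\cF$ would be pairwise orthogonal and hence linearly independent in $\cR$, forcing $n\leq r<d$ and contradicting $n\geq d$; therefore there exists $p^*$ with $\mu_{p^*}>\|f_{p^*}\|^2$.

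To close the argument we fix a unit $v\in \cR^\perp$ and, for small $t>0$, set $f_{p^*}(t)\igdef \cos(t)f_{p^*}+\sin(t)\|f_{p^*}\|v$, leaving the other vectors unchanged, to define $\cF(t)\in \mathcal B(\ca)$ with $\cF(t)\to \cF$ in the $d_P$-metric as $t\to 0$. Since $f_{p^*}$ is an eigenvector of $S_\cF$ for $\mu_{p^*}$ and $v\in \ker S_\cF$, the operator $S_{\cF(t)}$ coincides with $S_\cF$ on $\gen\{f_{p^*},v\}^\perp$; on $\gen\{f_{p^*}/\|f_{p^*}\|,v\}$ a direct calculation yields a $2\times 2$ block with trace $\mu_{p^*}$ and determinant $\sin^2(t)\|f_{p^*}\|^2(\mu_{p^*}-\|f_{p^*}\|^2)>0$. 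Hence the block eigenvalues $\{\mu_{p^*},0\}$ of $S_\cF$ are pinched to $\{\mu_{p^*}-\delta(t),\delta(t)\}$ for some $\delta(t)>0$, while all other eigenvalues are preserved. This gives $\lambda(S_{\cF(t)})\prec \lambda(S_\cF)$ with distinct multisets, and the Birkhoff--Jensen argument from Section \ref{subsec 2.2} combined with the strict convexity of $f$ yields $P_f(\cF(t))<P_f(\cF)$, the desired contradiction.

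The main technical obstacle is the first-order step, where we need enough regularity of $f$ on $\sigma(S_\cF)$ to differentiate $s\mapsto \tr f(S_\cF+sX)$ at $s=0$. Since strict convexity on $[0,\infty)$ yields differentiability off a countable set together with strict monotonicity of the one-sided derivatives, this can be handled by a subdifferential calculus and the usual Daleckii--Krein first variation formula. The extended case $f(0)=+\infty$ (as in the MSE) is disposed of separately: $P_f(\cF)=+\infty$ for every non-frame $\cF$, while frames are $d_P$-dense in $\hil^n$, so no non-frame can be a $d_P$-local minimizer.
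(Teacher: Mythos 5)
Your proof is correct, but it follows a genuinely different route from the paper's, so it is worth comparing. The paper first decomposes $\cF$ into irreducible subsequences $\cF_i$, invokes \cite[Corollary 3]{MR} to conclude that each $\cF_i$ is a $c_i$-tight frame for $W_i=\gen\{\cF_i\}$ (hence $S_\cF f_j = c_i f_j$ for $j$ in the $i$-th block), and then extracts indices $q,s$ in the same block with $\api f_q\coma f_s\cpi\neq 0$ to produce an eigenvector $f_s$ with eigenvalue $c_i>\|f_s\|^2$. You bypass both the irreducible decomposition and the external citation: the first-order optimality condition along the sphere directions already forces each $f_p$ to be an eigenvector of $S_\cF$ with eigenvalue $\mu_p\geq\|f_p\|^2$, and the pigeonhole step (pairwise orthogonality $\Rightarrow$ $n\leq r<d$) replaces the role of irreducibility in producing a $p^*$ with $\mu_{p^*}>\|f_{p^*}\|^2$. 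After that the two arguments converge: your rotation of $f_{p^*}$ into $\ker S_\cF$ and the resulting $2\times 2$ pinching $\{\mu_{p^*},0\}\to\{\mu_{p^*}-\delta(t),\delta(t)\}$, followed by strict majorization plus strict convexity, is essentially the same perturbation the paper performs with $f_s$ and a kernel vector $g$.

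The trade-off is exactly the one you flag at the end: your route stands on its own, but it makes the first-variation formula for $S\mapsto\tr f(S)$ an explicit ingredient, so some care is needed when $f$ is merely strictly convex rather than $C^1$ on a neighborhood of $\sigma(S_\cF)$. Your sketch via monotone subgradient selections (any selection from $\partial f$ is strictly increasing, so the eigenspace identification of $g(S_\cF)$ and $S_\cF$ still holds) is the right idea and can be made precise, but it is the one place where a referee would ask for details; the paper offloads exactly this technical point onto \cite[Corollary 3]{MR}. Two minor points worth making explicit in a polished writeup: (i) you should also vary $h\mapsto ih$ to upgrade $\Re\api f'(S_\cF)f_p\coma h\cpi=0$ to $\api f'(S_\cF)f_p\coma h\cpi=0$ for all $h\perp f_p\,$, and (ii) the strict drop $P_f(\cF(t))<P_f(\cF)$ uses that the new spectrum differs as a multiset from the old one, which follows since $\dim\ker S_{\cF(t)}=\dim\ker S_\cF-1$ for small $t>0$; both are implicit in your argument but deserve a sentence.
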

\begin{proof}
Let $\Pi= \{J_i\}_{i\in \IN{p}}$ be a partition of $\IN{n}$ such that, if 
$\cF_i=\{f_j\}_{j\in J_i}$ for  $i\in \IN{p}\,$, then $\{\cF_i\}_{i\in \IN{p}}$ is a partition of $\cF$ into irreducible subsequences.
Recall that in this case the subspaces $W_i \igdef \gen \{\cF_i\}$ ($i\in \IN{p}$) are mutually orthogonal. Hence, it is easy to see that each subfamily $\cF_i$ is a $d_P$-local 
minimizer of $P_f$ in the set 
$$ 
\{\{g_j\}_{j\in J_i}:\ g_j\in W_i\, , \ \|g_i\|=\|f_i\|\, , \ j\in J_i \}\, .
$$ 
By \cite[Corollary 3]{MR}  and the properties of $\Pi$, each 
$\cF_i$ is a $c_i$-tight frame 
for $W_i\,$, for some $c_i>0$, $i\in \IN{p}\,$. 
Therefore 
$$
S_\cF =\suml_{i\in\IN{p}} S_{\cF_i} = \suml_{i\in\IN{p}}c_i\,P_{W_i}   \ . 
$$ 
Notice that, in particular, $S_\cF\, f_j=c_i\,f_j$ for every $j\in J_i\,$.

\pausa
Assume now that $\cF$ is not a frame for $\hil$. Then, there 
exists $i\in\IN{p}$ and $q,\, s\in J_i$ such that 
$\langle f_q,f_s\rangle \neq 0$, because otherwise $\cF$ would be 
a sequence of mutually orthogonal vectors and then, since $n\geq d$ 
then we would have $n=d$ and that span $\cF=\hil$. In particular, 
for this choice of indexes 
we have that $a_s=\|f_s\|^2<c_i\,$, because
$$
c_i\,\|f_s\|^2=\langle S_\cF\,f_s\coma f_s\rangle \geq 
|\langle f_s\coma f_s\rangle|^2
+ |\langle f_s\coma  f_q\rangle|^2=(\|f_s\|^2
+\frac{|\langle f_s\coma f_s\rangle|^2}{\|f_s\|^2})\,\|f_s\|^2 \ .
$$
We are assuming that $\ker S_\cF\, \neq \trivial$. Hence 
there exists $g\in \ker S_\cF\,$ with $\|g\|=\|f_s\|$. 
Let 
$$
f_s(t)=\cos(t)\cdot f_s+\sin(t)\cdot g  \peso{for every}  t\in [0,1] \ ,
$$ 
so that $f_s(0)=f_s$ and $f_s(1)=g$. 
Notice that 
$\|f_s(t)\|=\|f_s\|$ for every $t\in [0,1]$. 
Let $\cF(t)$ be the sequence obtained from $\cF$ by replacing $f_s$ by $f_s(t)$ and 
let $\s(t)$ denote the frame operator of $\cF(t)$, for 
each $t\in [0,1]$. Then 
$$ 
\s(t)=[S_{\cF}- (f_s\otimes f_s)] + f_s(t)\otimes f_s(t)
\peso{for every} t\in [0,1]\ .
$$ 
The inequality $a_s=\|f_s\|^2<c_i\,$ implies 
that $S_{\cF}- (f_s\otimes f_s)\in \matpos$ and also that 
$R(S_{\cF}- (f_s\otimes f_s))=R(S_\cF)$. 
Indeed, $S_{\cF}- (f_s\otimes f_s)=[a_s^{-1}\,(c_i-a_s)]\cdot f_s\otimes f_s+S\,'$ with $S\,'\in \matpos$; in this case  $\lambda(S\,')$ is 
obtained from $\lambda(S_\cF\,)$ by setting one of the 
occurrences of $c_i$ in $\lambda(S)$ equal to $0$, and 
$f_s\in\ker S\,'$. 
Thus, 
\beq \label{ecuaorto}
\s(t)=S\,'+ [a_s^{-1}\,(c_i-a_s)\cdot f_s \otimes f_s
+ f_s(t)\otimes f_s(t)] \peso{with} 
f_s \coma  f_s(t)\in \ker S\,' \ , 
\eeq 
for every $t\in [0,1]$. 
Using again the inequality $a_s=\|f_s\|^2<c_i\,$, let us define 
$$
\lambda(t)=\lambda( [a_s^{-1}\,(c_i-a_s)]\cdot f_s \otimes 
f_s+ f_s(t)\otimes f_s(t))=
(\lambda_1(t)\coma \lambda_2(t)\coma 0\coma \ldots\coma 0)
\in (\R_{\geq 0} ^d)^\downarrow \ .
$$ 
Then $\lambda(0)=(c_i\coma 0\coma \ldots\coma 0)$, 
$\lambda(1)=(c_i-a_s\coma  a_s\coma 0\coma \ldots\coma 0)^\downarrow$ 
and $\lambda_2(t)>0$ for $t>0$. 
Then there exists $t_0\in (0,1)$ such that for $0<t<t_0$, $\lambda_2(t)<\eps$ for $\eps >0$ such that 
$\eps<\min_{1\leq j\leq p}c_j$ and 
$\eps<\lambda_1(t)=(c_i-\lambda_2(t))$. By the previous 
remarks, it follows that $\la(\s(t))$ is obtained from 
$\lambda(S_\cF\,)$ by replacing one occurrence of $c_i$ by 
$\lambda_1(t)$ and one occurrence of $0$ by $\lambda_2(t)$. 
Therefore, if $r=\rk \,S_\cF\,$ then $\la_j(\s\,(t))
\leq \lambda_j(S_\cF\,)$ for $1\leq j\leq r$ and 
$\tr \, S_\cF\,=\sum_{j=1}^{r+1}\lambda_j(\s\,(t))=\tr \, 
\s\,(t)$ imply 
that $\lambda(\s\,(t))\prec \lambda(S_\cF\,)$ for $0<t<t_0\,$.

\pausa
These facts show that $\cF(t)$ converges with respect to the $d_P$-metric as $t\rightarrow 0^+$, while $P_f(\cF(t))<P_f(\cF)$ for $t\in (0,t_0)$. This contradicts the assumption that $\cF$ is a $d_P$-local minimum of $P_f$ and thus we should have that $R(S_\cF)=\hil$, i.e. $\cF$ is a frame.
\end{proof}

\begin{teo} \ID. 
Let $\cF=(\cF_0 \coma \cG )\in \cC_\ca^{\rm op}(\cF_0)$ be a $d_P$-local minimizer of 
$P_f$ on $\cC_\ca^{\rm op}(\cF_0)$, for some 
strictly convex function $f:[0 \coma \infty)\rightarrow [0 \coma \infty)$. 
Then $\cF$ is a frame, i.e. $S= S_\cF \in \gld$.
\end{teo}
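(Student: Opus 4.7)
Plan. The proof will proceed by contradiction. Assume that $\cF=(\cF_0,\cG)\in \cC_\ca^{\rm op}(\cF_0)$ is a $d_P$-local minimizer of $P_f$ and that $S_\cF$ is not invertible. The strategy is to produce, starting at $\cF$, a curve in $\cC_\ca^{\rm op}(\cF_0)$ that converges to $\cF$ in $d_P$ and along which $P_f$ strictly decreases, thereby contradicting the local minimality. The construction is modeled on the perturbation used in the proof of Lemma~\ref{sin F0}.

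The first step is to exploit the structure given by Theorem~\ref{teocafop2} applied to $\cF\in \cC_\ca^{\rm op}(\cF_0)$: there exists an ONB $\{v_i\}_{i\in \IN d}$ of $\hil$ diagonalizing $S_{\cF_0}$, $S_\cG$ and $S_\cF$ simultaneously, with $S_{\cF_0}=\sum_i \la_i\, v_i\otimes v_i$ for $\la=\la(S_{\cF_0})$ decreasing, $S_\cG=\sum_i \mu_i\, v_i\otimes v_i$ for $\mu=\la^\uparrow(S_\cG)$ increasing, and $S_\cF=\sum_i (\la_i+\mu_i)\, v_i\otimes v_i$. Non-invertibility of $S_\cF$ forces some index $i^*$ with $\la_{i^*}+\mu_{i^*}=0$; by nonnegativity, $\la_{i^*}=\mu_{i^*}=0$, so that $v_{i^*}\in \ker S_{\cF_0}\cap \ker S_\cG$ is orthogonal to every vector in $\cF$.

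The proposed perturbation is then, for a well-chosen $j_0\in \{\n0+1,\dots,n\}$,
\[
f_{j_0}(t)=\cos(t)\, f_{j_0}+\sin(t)\, \sqrt{a_{j_0}}\, v_{i^*},\qquad t\in [0,1],
\]
which preserves $\|f_{j_0}(t)\|^2=a_{j_0}$. Let $\cG(t)$ be obtained from $\cG$ by replacing $f_{j_0}$ with $f_{j_0}(t)$ and put $\cF(t)=(\cF_0,\cG(t))$. Since $v_{i^*}\perp f_{j_0}$, the $2$-dimensional subspace $\gen\{f_{j_0}, v_{i^*}\}$ carries the perturbation and is orthogonal to $R(S_\cF)\ominus \gen\{f_{j_0}\}$. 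This is exactly the setting of the final computation in the proof of Lemma~\ref{sin F0}: the eigenvalue of $S_\cF$ attached to $f_{j_0}$ is split into two positive values with the same sum, while the zero eigenvalue at $v_{i^*}$ is activated into a small positive value. The resulting spectrum satisfies $\la(S_{\cF(t)})\prec \la(S_\cF)$ strictly for small $t>0$, and strict convexity of $f$ gives $P_f(\cF(t))<P_f(\cF)$.

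The hard part, and the true obstacle of the proof, is to ensure that $\cF(t)\in \cC_\ca^{\rm op}(\cF_0)$ for all sufficiently small $t>0$, i.e.\ that the perturbation preserves the optimal matching between the spectra of $S_{\cF_0}$ and $S_{\cG(t)}$ demanded by the definition of $\cC_\ca^{\rm op}(\cF_0)$. To secure this we would use the irreducible decomposition $\cG=\bigsqcup_{\ell=1}^{p}\cG_\ell$ on mutually orthogonal spans $W_\ell=\gen\{\cG_\ell\}$. Because $S_{\cF_0}$ commutes with $S_\cG$ and each $W_\ell$ is spanned by eigenvectors of $S_\cG|_{W_\ell}=S_{\cG_\ell}$, each $W_\ell$ is $S_{\cF_0}$-invariant. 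Combining this with an adaptation of Lemma~\ref{sin F0} applied to variations of $\cG_\ell$ inside $W_\ell$ that keep $S_{\cG_\ell}$ in the commutant of $S_{\cF_0}|_{W_\ell}$, one shows block by block that each $\cG_\ell$ is a tight frame on $W_\ell$. Tightness makes every $f_j\in \cG_\ell$ an eigenvector of $S_\cG$ and, together with the commutation structure, allows us to pick $j_0$ so that $f_{j_0}$ is simultaneously an eigenvector of $S_{\cF_0}$; then $\gen\{f_{j_0}, v_{i^*}\}$ is $S_{\cF_0}$-invariant, $S_{\cG(t)}$ continues to commute with $S_{\cF_0}$ in the correct matched ordering, and the curve $\cF(t)$ indeed lies in $\cC_\ca^{\rm op}(\cF_0)$. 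With this in place the spectral estimate of the previous paragraph closes the argument.
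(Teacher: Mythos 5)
Your proposal starts out along the paper's lines (Theorem~\ref{teocafop2}, common ONB, non-invertibility gives $\la_{i^*}=\mu_{i^*}=0$), but it misses the single observation that makes the paper's proof short, and this omission is what generates the ``hard part'' you then struggle with. Since $\la$ is decreasing and $\mu$ is increasing, $\la_{i^*}=\mu_{i^*}=0$ does not just give one good direction $v_{i^*}$: it forces $\la_i=0$ for all $i\geq i^*$ and $\mu_i=0$ for all $i\leq i^*$. Hence $R(S_\cG)\inc \gen\{v_i:i>i^*\}\inc \ker S_{\cF_0}$, i.e.\ \emph{every} vector of $\cG$ already lies in $\ker S_{\cF_0}$. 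Once this is seen, the potential splits as $P_f(\cF_0,\cG')=P_f(\cF_0)+P_f(\cG')$ for every $\cG'$ with prescribed norms and entries in $\cH_r=\ker S_{\cF_0}$, and any such $(\cF_0,\cG')$ lies in $\cC_\ca^{\rm op}(\cF_0)$ automatically. So $\cG$ is a $d_P$-local minimizer of the unconstrained Benedetto--Fickus-type problem on $\cH_r$, and Lemma~\ref{sin F0} (applicable because $k\geq d-\rk S_{\cF_0}=\dim\cH_r$) immediately forces $S_\cG$ to be invertible on $\cH_r$, contradicting $\mu_{r+1}=0$. There is no need to construct a curve by hand at the level of $\cF$.

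By contrast, your resolution of the ``hard part'' via the irreducible decomposition contains several claims that are not established and are in fact hard to push through. That each $W_\ell$ is $S_{\cF_0}$-invariant does not follow merely from $S_{\cF_0}S_\cG=S_\cG S_{\cF_0}$: the eigenspaces of $S_\cG$, which are what commutation preserves, may mix several $W_\ell$'s. That each $\cG_\ell$ is a $c_\ell$-tight frame on $W_\ell$ is not available to you: the variations you restrict to (those keeping $S_{\cG_\ell}$ in the commutant of $S_{\cF_0}|_{W_\ell}$) form a smaller class than those in \cite[Corollary 3]{MR}, so ``local minimizer in the constrained class'' does not yield tightness. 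And even if $f_{j_0}$ were an eigenvector of $S_\cG$, commutation does not promote it to an eigenvector of $S_{\cF_0}$: it only says $S_{\cF_0}f_{j_0}$ stays in the same $S_\cG$-eigenspace. Finally, the strict majorization $\la(S_{\cF(t)})\prec \la(S_\cF)$ for small $t$ is not automatic from ``splitting one eigenvalue and activating a zero one''; the argument in Lemma~\ref{sin F0} needs the specific choice of a non-isolated vector within an irreducible block and the inequality $a_s<c_i$ that this choice provides. In short: the proposal replaces a clean reduction (everything lives in $\ker S_{\cF_0}$, then quote Lemma~\ref{sin F0}) with an incomplete reconstruction of Lemma~\ref{sin F0}'s internal argument in a setting where its hypotheses have not been recovered.
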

\proof
Denote by $S_0 = S_{\cF_0}\,$, $\lambda(S_0)=\la =  \la^\downarrow$, 
$S_1 = S_{\cG }$ and $\lambda(S_1)=\mu^\downarrow$ for some 
$\cb\prec\mu=\mu^\uparrow$. 
Since $\cF=(\cF_0 \coma \cG )\in\cC_\ca^{\rm op}(\cF_0)$, 
by Theorem \ref{teocafop2} there exists an ONB
$\{v_i: i\in \IN{d}\}$ of eigenvectors for $S_0 \coma \la$ such that 
$S= S_0 + S_1 = \sum_{i\in \IN{d}} \, (\la_i + \mu_i) \, v_i \otimes v_i \,$.  
If $S \notin \gld$, let
\beq\label{el r de mu}
r = \max \{i\in \IN{d}: \la_i \neq 0\} < 
\min  \{j\in \IN{d}: \mu_j \neq 0\}  -1 \ . 
\eeq
Then $\cH_r = \gen\{v_i: i >r\}= \ker S_0\,$, and
 $S_1$ acts on $\cH_r\,$. 
The minimality of $\cF$ in $\cC_\ca^{\rm op}(\cF_0) $ 
implies that $\cG $ is a $d_P$-local minimizer of $P_f$ in the set ($n=k+\n0$)
$$
\cB_k(\cH_r) \igdef 
\{\cG=\{g_i\}_{i\in \IN{k}}\in \hil_r^k: \ \|g_i\|^2=\alpha_i\, , \ i\in \IN{k}\} \ ,
$$ 
because $\la(S_0 + S_\cG ) = (\la\coma \la(S_\cG)\,)^\downarrow \implies 
P_f (\cF_0\coma \cG) = P_f(\cF_0)+P_f(\cG)$ for every $\cG\in \cB_k(\cH_r)$. 
By Lemma \ref{sin F0}, we deduce that $S_1 \in \cG l (\cH_r)^+$, contradicting  
Eq. \eqref{el r de mu}. 
\QED

\section{On the structure of global minimizers of $P_f$ on $\cC_\ca(\cF_0)$}\label{sec5}

\ID \ and let $f:[0,\infty)\rightarrow \R$ be a strictly convex function. 
In this section we obtain a description of the geometrical structure 
of global minimizers of $P_f$ on $\cC_\ca(\cF_0)$. We accomplish this by 
studying the structure of $d_P$-local minimizers of $P_f$ in terms of 
perturbation results for the classical frame design problem. This geometrical 
structure of global minimizers allow us to obtain an finite step algorithm 
that produces a finite set (that does not depend on $f$) which completely 
describes the optimal frame completions $\cF\in\cC_\ca(\cF_0)$ for $P_f$.

\subsection{Partitions into irreducible subsequences} 

From now on we shall fix a strictly convex function  $f:[0 \coma \infty)\rightarrow [0 \coma \infty)$.

\pausa
The goal of  this section is the following  Theorem  on the spectral and geometrical 
structure of global minimizers of $P_f(\cdot)$ on $\cC_\ca^{\rm op}(\cF_0)$. 
The proof is divided into some lemmas that we state after the main result. 
Recall that $\Gamma_d(\cb)= \{ \mu\in (\R_{\geq 0}^d)^\uparrow: \ \cb\prec \mu\}$.

\begin{teo}\label{vale la conjetur}
\ID. 
Denote by  $\lambda=\lambda(S_{\cF_0})$. 
Then 
\ben
\item There exists a vector $\mu = \mu(\lambda \coma \ca \coma f) \in 
\Gamma_d(\cb)$ such that 
\een
\bce 
$\cF=(\cF_0 \coma \cG)\in \cC_\ca(\cF_0)$ is a global minimizer of $P_f \iff
\cF\in \cC_\ca^{\rm op}(\cF_0)$ and $\lambda^\uparrow(S_{\cG})=\mu$ .
\ece
Assume now that $\cF=(\cF_0\coma \cG)$ is a global minimizer of $P_f$ on $\cC_\ca^{\rm op}(\cF_0)$. 
Let $\{\cG_i\}_{i\in \IN{p}}$ be a partition of $\cG$ into irreducible subfamilies, where 
$\cG_i=\{f_j\}_{j\in J_i}\,$ for a partition $\{J_i\}_{i\in \IN{p}}$ of the set of indexes $\{i:\ 1\leq i\leq k\}$. 
Then for each  $i\in \IN{p}\,$ 
\ben
\item[2.]  
The frame operators  $S_{\cG_i}$ and $S_{\cF_0}$ commute. 
\item[3.] 
There exists $c_i\in \R_{> 0}\,$  such that  
$ 
S_\cF \, f_j=c_i\, f_j \peso{for every} 
j\in J_i \ .
$
\end{enumerate}
\end{teo}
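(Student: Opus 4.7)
Item 1 is an immediate restatement of Theorem~\ref{teo sobre unico espectro}, so only items 2 and 3 need fresh argument. The plan is to use a first-variation argument to show that each completion vector $f_j$ is an eigenvector of $S_\cF$, and then invoke the irreducibility of each block $\cG_i$ to pin down a single eigenvalue $c_i$. Before starting, observe that a global minimizer $\cF$ lies in $\cC_\ca^{\rm op}(\cF_0)$ by Eq.~\eqref{en el op}, and as such it is a $d_P$-local minimizer on that set; the results of Section~\ref{sec4.3} then give $S_\cF\in\gld$, so $\sigma(S_\cF)\subset(0,\infty)$. Since $f$ is strictly convex on $[0,\infty)$, its derivative $f'$ is strictly increasing (hence injective) on $(0,\infty)$, which makes the functional calculus $f'(S_\cF)$ well defined and share its eigenspaces with $S_\cF$.

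First I would perturb a single completion vector. Fix $j\in\{\n0+1,\dots,n\}$ and $h\in\hil$ with $h\perp f_j$ and $\|h\|=\|f_j\|$, and consider the curve obtained by replacing $f_j$ with $f_j(t)=\cos(t)\,f_j+\sin(t)\,h$ (all other vectors unchanged). This keeps $\cF(t)\in\cC_\ca(\cF_0)$, so $\cF$ being a local minimum of $P_f$ on this set yields
$$
0=\frac{d}{dt}P_f(\cF(t))\Big|_{t=0}=\tr\bigl(f'(S_\cF)\,(h\otimes f_j+f_j\otimes h)\bigr)=2\,\Re\api f'(S_\cF)\,f_j\coma h\cpi .
$$
Varying $h$ across the orthogonal complement of $f_j$ forces $f'(S_\cF)\,f_j\in\gen\{f_j\}$, and the injectivity of $f'$ on $\sigma(S_\cF)$ then gives $S_\cF\,f_j=c_j\,f_j$ for some scalar $c_j$; positivity $c_j>0$ is immediate from $c_j\,\|f_j\|^2=\api S_\cF f_j,f_j\cpi=\sum_i|\api f_j,f_i\cpi|^2\geq\|f_j\|^4>0$.

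To finish, fix a block $J_i$ and suppose toward contradiction that $c_{j_1}\neq c_{j_2}$ for some $j_1,j_2\in J_i$. Then $f_{j_1}$ and $f_{j_2}$ belong to distinct (hence orthogonal) eigenspaces of $S_\cF$, so partitioning $J_i$ according to the value of $c_j$ would split $\cG_i$ into mutually orthogonal subfamilies, contradicting its irreducibility. Hence all $c_j$ with $j\in J_i$ coincide with a common $c_i>0$, proving item 3. Item 2 then follows algebraically: $W_i=\gen\{f_j:j\in J_i\}\inc\ker(S_\cF-c_i)$, so $S_\cF|_{W_i}=c_i\,I_{W_i}$ and therefore $S_{\cF_0}|_{W_i}=c_i\,I_{W_i}-S_{\cG_i}|_{W_i}$ trivially commutes with $S_{\cG_i}|_{W_i}$; on $W_i^\perp$ the operator $S_{\cG_i}$ vanishes, and $W_i^\perp$ is $S_{\cF_0}$-invariant by self-adjointness, so $S_{\cF_0}S_{\cG_i}=S_{\cG_i}S_{\cF_0}$ holds globally. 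The main obstacle throughout is ensuring that the Euler--Lagrange condition captures an eigenvector relation for $S_\cF$ itself (and not merely for the auxiliary operator $f'(S_\cF)$), which is handled by the invertibility $S_\cF\in\gld$ and the strict monotonicity of $f'$ on $(0,\infty)$ inherited from strict convexity of $f$.
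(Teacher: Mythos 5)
The proposal is correct in its approach for smooth $f$, but it takes a genuinely different route from the paper, and it has one real gap.

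The paper first establishes item~2 (commutativity of $S_{\cG_i}$ with $S_{\cF_0}$) as a consequence of Corollary~\ref{loqueimporta}, which in turn relies on the Schur--Horn/majorization machinery and the ``pegaditos'' condition of Theorem~\ref{teo sobre unico espectro}. It then deduces item~3 from item~2 through Lemmas~\ref{prolem no se} and~\ref{sihayhipvale}, which combine $\prec$-minimization on the reducing subspace $\cR$, the $d_P$-local cross section of Corollary~\ref{coro sec locales dp}, and a one-dimensional strict-convexity argument. You reverse the logical order: you go directly for item~3 via an Euler--Lagrange first-variation on the norm-sphere, obtaining $S_\cF\,f_j=c_j\,f_j$ and then deriving item~2 algebraically from the eigenvector relation. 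When it applies, your argument is shorter and bypasses Corollary~\ref{loqueimporta} and Remark~\ref{miraquienveiene} entirely, which is a genuine simplification. Your algebraic derivation of item~2 from item~3 is correct (and in particular $W_i=R(S_{\cG_i})$ is $S_{\cF_0}$-invariant, as you implicitly need), and the positivity bound $c_j\geq\|f_j\|^2>0$ is a nice shortcut.

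The gap: your first-variation step requires $f$ to be differentiable on $\sigma(S_\cF)\subset(0,\infty)$, since you use the functional-calculus formula $\frac{d}{dt}\,\tr f(S_{\cF(t)})\big|_{t=0}=\tr\big(f'(S_\cF)\,(h\otimes f_j+f_j\otimes h)\big)$ and then the injectivity of $f'$ to transfer eigenvectors from $f'(S_\cF)$ to $S_\cF$. The hypothesis in force (``a strictly convex function $f:[0,\infty)\to[0,\infty)$'') does \emph{not} imply differentiability --- a strictly convex function may have countably many corners --- so the claim that ``strict convexity'' gives a strictly increasing $f'$ is not valid as stated; it presupposes differentiability. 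The paper's argument deliberately sidesteps this, using only that the scalar function $h(x)=\tr f(\s(x))$ of a real parameter is strictly convex (from the majorization formulation), never differentiating $f$. Your proof therefore covers all the paper's concrete applications (MSE, Benedetto--Fickus, etc., where $f$ is smooth) but is narrower than the theorem as stated. To close the gap you would need to replace the derivative by one-sided directional derivatives and argue with subgradients of the convex map $S\mapsto\tr f(S)$, or else fall back on the paper's convexity trick; as written, invoking $f'$ is an unwarranted extra hypothesis.
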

\begin{proof}
Item 1 was shown in Theorem \ref{teo sobre unico espectro}. 

\pausa
2. Assume now that $\cF=(\cF_0 \coma \cG )$ is a global minimizer of $P_f$ on $\cC_\ca^{\rm op}(\cF_0)$. Then  
$$ 
S_\cG=\bigoplus_{i\in \IN{p}}S_{\cG_i} \implies \sigma(S_1)=\bigcup_{i\in\IN{p}}\ \sigma(S_{\cG_i})\ .
$$ 
Let $P(\alpha)$ (resp. $P_i(\alpha)$)  denote the spectral projection of $S_\cG$ (resp. $S_{\cG_i}$) associated with $\alpha\in\sigma(S_\cG)$ 
(or 0 in case $\alpha\notin\sigma(S_{\cG_i})$). Then, 
for every $i\in \IN{p}$ we have that 
$$
S_{\cG_i}=\sum_{\alpha\in\sigma(S_\cF)} \alpha\ P_i(\alpha) \peso{with} 
\sum_{i\in \IN{p}} P_i(\alpha)=P(\alpha) \ , \ \ \alpha\in\sigma(S_\cG)\ .
$$ 
Thus, each $P_i(\alpha)$ is a sub-projection of $P(\alpha)$ for $i\in \IN{p}\,$. 
If we consider $\alpha\in\sigma(S_\cG)$, $\alpha\neq 0$, then Corollary 
\ref{loqueimporta} shows that $P_i(\alpha)$ commutes with $S_{\cF_0}\,$, for every $i\in\IN{p}\,$. 
This last fact implies that $S_{\cG_i}$ commutes with $S_{\cF_0}\,$, for every $i\in\IN{p}\,$.

\pausa
3. It is a consequence of item 2  and the following Remark and Lemmas. 
\end{proof}

\begin{rem}\label{miraquienveiene}
\ID\  and let $t=\tr\,\ca$. 
Denote by $S_0 = S_{\cF_0}$ and $\la = \la (S_0)$. Consider the set
$$ 
U_t(S_0\coma m)=\{S_0+B:\   B\in \matpos \, , \ \rk \, B 
\le d-m \ , \ \tr \,(S_0+B)\ =\  t\ \} \ ,
$$
where $n = k+\n0$ and $m = d-k$. It is shown in \cite[Theorem 3.12]{MRS4} that there exist $\prec$-minimizers in $U_t(S_{0}\coma m)$.  
Indeed, there exists $\nu=\nu(\la \coma m)\in (\R^d_{\geq 0})^\downarrow$ such that $S\in U_t(S_{0}\coma m)$ is 
a $\prec$-minimizer if and only if $\la(S)=\nu$. In this case, there exist $c>0$ and  
$\{v_i:\ i\in \IN{d}\}$, an ONB 
for $S_0$ and $\la $ such that 
\begin{enumerate}
\item $S-S_0=\sum_{i=1}^d \rho_i\cdot v_i\otimes v_i$, 
where $\rho=\rho(\lambda\coma m)=\lambda(S-S_0)^\uparrow$;
\item $\nu=(\lambda+\rho^\uparrow)^\downarrow$ and $\lambda_i(S_{0})+\rho_i=c$ whenever $\rho_i\neq 0$.\end{enumerate}
 As a consequence of these facts we get $Sf=c\,f$ for every $f\in R(S-S_0)$. 
Moreover,
if $S\,'\in U_t(S_{0}\coma m)$ is another matrix such that $\lambda(S\,'-S_0)^\uparrow=\rho$ 
and $S\,'-S_0=\sum_{i=1}^d \rho_i\,w_i\otimes w_i\,$, where $\{w_i:\ i\in \IN{d}\}$  
is some  ONB for $S_0$ and $\la$,  then 
$\lambda(S\,')=\nu$ and $S\,'$ is a $\prec$-minimizer in $U_t(S_{0}\coma m)$.

\pausa
Assume now that $\cF=(\cF_0 \coma \cG )\in \cC_\ca(\cF_0)$ is such that $S_{0}$ 
and $S_\cG$ commute.
Denote by 
$$
\cR=R(S_{\cG })  \ \ ,  \ \ \mu= \la\ua(S_{\cG })  \ \ ,  \ \ k' = \rk \,S_{\cG }
\ \ ,  \ \ m' = d-k' = \max \{i \in \IN{d}: \mu_i =0\} 
$$ 
and $\tau=\tr \,\cb $. 
Note that $\cR$ reduces $S_{\cF_0}\,$. Write $S_\cR = S_{\cF_0}|_{\cR} \in  \poRta \,$. 
We get the identity 
\beq\label{Rredux}
S_{\cF_0}+ \poRt = S_{\cF_0}|_{\cR^\perp}\oplus \big(S_{\cR} + \poRta \big) \ ,
\eeq
where $\poRt$ is the sapce defined in Eq. \eqref{poRt}. 
If we identify $\cR$ with $\C^{k'}$ we have  that 
$$
S_\cR + \poRta = U_s(S_\cR \coma 0)\inc \mathcal M_{k'}(\C)\ ,  
$$ 
where $s= \tau + \tr \,S_\cR \,$.
By the previous comments there exists 
$\stau \in S_\cR + \poRta $ 
such that $\la (\stau) = \nu(\la(S_\cR) \coma 0)\in \R_{\geq 0}^{k'}\,$, 
which is a $\prec$-minimizer in $U_s(S_\cR   \coma 0)=S_\cR    + \poRta\, $. 
As a consequence of Eq. \eqref{Rredux} and Remark \ref{rem doublystohasctic}, 
we conclude that 
\bce 
$S_1 \igdef 
S_{\cF_0}|_{\cR^\perp}\oplus \stau\in S_{\cF_0}+ \poRt$ is a $\prec$-minimizer 
in $S_{\cF_0}+ \poRt$ .
\ece 
Notice that $\la(S_1)=\big(\,\lambda(S_{\cF_0}|_{\cR^\perp})\coma \lambda(\stau)\,\big)\da
\in \R^d$. Moreover, by items 1 and 2 above, we see that in this case there exists an 
ONB (for $\cR$) $\{v_i\}_{i\in \IN{k'}}$ for $S_\cR   $ and $\lambda(S_\cR   )\in \R^{k'}$ 
such that 
\beq \label{ecua en R}
\stau-S_\cR   
=\sum_{i\in \IN{k'}}\rho_i\,v_i\otimes v_i \ ,\ \text{ where } \ \ 
\rho =\lambda(\stau -S_\cR   )^\uparrow\in \R^{k'} \ ,
\eeq 
and there exists $c\in \R_{> 0}\,$ such that $\lambda_i(S_\cR   )+\rho_i=c$ whenever $\rho_i\neq 0$.
Hence, in this case we obtain that 
\beq\label{son autovectores en el rango}
S_1 f=c\ f \peso{for every} f\in R(\stau-S_ \cR)\inc \mathcal R\ .
\eeq
\EOE
\end{rem}

\begin{lem}\label{prolem no se}
\ID. Fix a subspace $\cR \inc \C^d$ which reduces $S_{\cF_0}\,$. 
Let $\cF=(\cF_0\coma \cG )\in \cC_\ca(\cF_0)$ be a $d_P$-local minimizer of 
$P_f$ on the set 
$$
\big\{\, \cF'=(\cF_0\coma \cG')\in  \cC_\ca(\cF_0) \, : \,  R(S_{\cG'} ) \inc \cR\, \big\}\ .
$$ 
Assume further that $S_0= S_{\cF_0}$ and $S_{\cG }$ commute and that the sequence $\cG $ 
is {\bf irreducible}. Then 
\begin{enumerate}
\item The frame operator $S_{\cF}$ is a $\prec$-minimizer in $S_{\cF_0}+ \poRt\,$.
\item The subspace $\cR$ is contained in a eigenspace of $S_\cF\,$.
\end{enumerate}
In particular, there exists $c\in \R_{> 0}$ such that $S_{\cF}\,f_i=c\,f_i\,$, 
for $\n0+1\leq i\leq n$.
\end{lem}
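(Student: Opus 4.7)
The plan is to reduce the $d_P$-local minimality at $\cF$ to a spectral optimization on the convex set $S_{\cF_0} + \poRt$, and then compare $S_\cF$ directly with an explicit $\prec$-minimizer coming from the commutation assumption.

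\textbf{Step 1 (reduction via local cross section).} Since $\cG$ is irreducible, Corollary \ref{coro sec locales dp} furnishes a $d_P$-continuous local cross section $\varphi$ of $\cS$ on an open neighborhood $V$ of $S_\cF$ in $S_{\cF_0} + \hil_d(\cR)^\tau$, satisfying $\varphi(S_\cF) = \cF$; the construction via the map $\Phi_\cF$ of Eq. \eqref{La Phi} also keeps the completing parts inside $\cR$. (In the degenerate case $R(S_\cG) \subsetneq \cR$, I first reduce to the smaller reducing subspace $R(S_\cG)$.) Then the $d_P$-local minimality of $\cF$ on the given set transfers to local minimality of $S \mapsto \tr f(S)$ at $S_\cF$ on $V \cap (S_{\cF_0} + \poRt)$.

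\textbf{Step 2 (explicit $\prec$-minimizer).} The commutativity of $S_0$ and $S_\cG$ together with $\cR$ reducing $S_0$ places us exactly in the setting of Remark \ref{miraquienveiene}, which supplies $S_1 = S_{\cF_0}|_{\cR^\perp} \oplus \stau \in S_{\cF_0} + \poRt$, a $\prec$-minimizer in $S_{\cF_0} + \poRt$. The structure of $\stau$ is explicit: there exist $c > 0$ and a common ONB of $\cR$ diagonalizing $S_\cR$ and $\stau$ such that, writing $\rho = \la^\uparrow(\stau - S_\cR)$, one has $\la_i(S_\cR) + \rho_i = c$ whenever $\rho_i > 0$. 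In particular $S_1 f = c\, f$ for every $f \in R(\stau - S_\cR)$.

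\textbf{Step 3 (forcing $S_\cF = S_1$, and deducing items 2--3).} Consider the straight-line segment $\s(t) = (1-t) S_\cF + t S_1$, $t \in [0,1]$, which remains inside the convex set $S_{\cF_0} + \poRt$. Assuming $S_\cF \neq S_1$, the map $h(t) = \tr f(\s(t))$ is strictly convex on $[0,1]$, using the classical fact that $S \mapsto \tr f(S)$ is strictly convex on $\matpos$ when $f$ is strictly convex. Step 1 gives $h'(0^+) \ge 0$, and strict convexity then forces $h$ to be strictly increasing on $[0,1]$, so $h(1) > h(0)$. But $\la(S_1) \prec \la(S_\cF)$ together with convexity of $f$ implies $h(1) \le h(0)$, a contradiction. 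Hence $S_\cF = S_1$, proving item 1. This in turn gives $S_\cG = \stau - S_\cR$ as operators on $\cR$, so $R(\stau - S_\cR) = R(S_\cG) = \cR$ (after the Step 1 reduction), forcing all $\rho_i > 0$ and $\stau = c\, I_\cR$. Thus $\cR$ is an eigenspace of $S_\cF$ with eigenvalue $c$ (item 2), and $S_\cF f_i = c\, f_i$ for each $f_i \in \cG \subset \cR$.

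\textbf{Main obstacle.} The delicate step is Step 3: Step 1 yields local minimality of $\tr f$ only in the neighborhood $V$ of $S_\cF$, whereas the segment to $S_1$ will generically exit $V$. The resolution is that strict convexity of $h$ on the entire segment, combined with the one-sided derivative inequality $h'(0^+) \ge 0$ coming from the local minimum at the endpoint, is sufficient to force global monotonicity on $[0,1]$; no local-minimality hypothesis is needed along the whole path. The other ingredient to be invoked carefully is the strict convexity of $S \mapsto \tr f(S)$ on $\matpos$ for strictly convex $f$, which is a standard but non-trivial matrix-analytic fact.
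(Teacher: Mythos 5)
Your proof is correct and follows essentially the same route as the paper: reduce $d_P$-local minimality to spectral local minimality via the cross section of Corollary~\ref{coro sec locales dp}, produce the $\prec$-minimizer $S_1 = S_{\cF_0}|_{\cR^\perp}\oplus \stau$ from Remark~\ref{miraquienveiene}, and compare $S_\cF$ to $S_1$ along a segment using strict convexity. In fact your straight-line segment $(1-t)S_\cF+tS_1$ coincides (after reparametrization $t\leftrightarrow 1-x$) with the paper's curve $\s(x)$, because $S_\cF-S_1$ is diagonal in the common eigenbasis $\{v_i\}$; both endpoints lie in $S_{\cF_0}+\sum_i \R_{\ge 0}\,v_i\otimes v_i$, and that is where the paper's curve lives. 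The one genuine difference is how strict convexity of $h$ is obtained: the paper uses that the whole segment is simultaneously diagonal in $\{v_i\}$ — a consequence of the commutation hypothesis together with the specific construction of $S_1$ from the same ONB — so $h(x)=\sum_{i\le k'} f(\la_i(S_\cR)+x\beta_i+(1-x)\rho_i)+\text{const}$ is visibly a sum of affine compositions, making strict convexity elementary with no further matrix analysis. You instead invoke the general strict convexity of $S\mapsto\tr f(S)$ on $\matpos$, which is true (via Klein's inequality with an equality analysis), but is heavier machinery than necessary here, and the paper's proof is more self-contained because it sidesteps it. Your Step~1 side-remark about reducing to $R(S_\cG)$ when $R(S_\cG)\subsetneq\cR$ is also in the spirit of the paper, which tacitly operates with $\cR = R(S_\cG)$ (this is the situation in the application in Lemma~\ref{sihayhipvale}); your item~2 conclusion $\stau = c\,I_\cR$ is a correct and slightly more explicit rendering of the paper's final step.
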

\begin{proof}  
Let $k'=\rk \,S_\cG$ 
and $m'=d-k'$.  Since by hypothesis $S_0$ and $S_\cG$ commute, 
arguing as in Remark \ref{miraquienveiene} we conclude that there exists 
$S_1=S_{\cF_0}|_{\cR^\perp}\oplus \stau\in S_{\cF_0}+ \poRt$ such that $S_1$ is a 
$\prec$-minimizer in $S_{\cF_0}+ \poRt\,$.
On the other hand, $S_\cG$ and $S_\cF$ also commute so that there exists an 
ONB of $\C^d$ of eigenvectors of $S_\cF$ and $S_\cG\,$, denoted $\{v_i:\ i\in \IN{d}\}$, such that $\{v_i\}_{i\in \IN{k'}}$ is an ONB for $S_\cR \igdef S_0|_\cR\in 
\poRta  $ and $\lambda(S_\cR   )$. In other words
$$
S_\cF=\sum_{i\in\IN{d}}\alpha_i\cdot v_i\otimes v_i\ \ , \ \   S_\cR   
=\sum_{i\in \IN{k'}}\la_i(S_\cR   )\cdot v_i \otimes v_i\ \ \text{ and } \ \ S_\cG
=\sum_{i\in \IN{k'}}\beta_i\cdot v_i\otimes v_i \  ,
$$ 
 for some $(\alpha_i)_{i=1}^d \in \R_{\geq 0}^d$ and $(\beta_i)_{i=1}^{k'} \in \R_{\geq 0}^{k'}$.
Let $\rho =\lambda(\stau -S_\cR   )\ua\in \R^{k'}$ be as 
in Eq. \eqref{ecua en R}
and consider the continuous curve 
$\s : [0,1] \to S_{\cF_0} +\poRt$ given by 
$$ 
\mathfrak{s}(x)=
S_{\cF_0} + \sum_{i\in \IN{k'}}[x\cdot \beta_i+(1-x)
\cdot \rho_i] \cdot v_i\otimes v_i  \peso{for} x\in [0,1]\ .
$$
First, notice that $\mathfrak{s}(x)$ is a segment (so, in particular, a continuous curve) 
joining $\mathfrak{s}(0)=S_1= S_{\cF_0}|_{\cR^\perp}\oplus \stau$ and 
$\mathfrak{s}(1)=S_\cF\,$. 
Consider now  the map $h : [0,1]\to \R$ given by 
$$
\barr{rl}
h(x)& =\tr \, f\big(\,\mathfrak{s}(x)\,\big) = 
\sum_{i\in \IN{d}}f(\lambda_i(\mathfrak{s}(x))) \\&\\
&= \sum_{i=k'+1}^d f(\al_i) + 
\sum_{i\in \IN{k'}}f \big(\lambda_i(S_\cR   )+x\cdot \beta_i+(1-x)
\cdot \rho_i\,\big) 
\earr
$$ 
for every $x\in [0,1]$.  
Since the sequence  
$\cG $ is irreducible then 
 Corollary \ref{coro sec locales dp},  implies that 
the map $\cS:\cC_\ca(\cF_0)\rightarrow S_{0}+\poRt$ 
defined in Eq. \eqref{La S} has 
a $d_P$-continuous local cross section $\varphi$ 
around $ S_\cF$ such that $\varphi(S_\cF)=  \cF$.
Then, the fact that  $\cF$ is a $d_P$-local minimizer of $P_f$ implies that 
$h$ has a local minimizer at $1\in[0,1]$.   
But this $h$ is a strictly convex function on $[0,1]$ that has a global 
minimum at $x=0$, since $\mathfrak{s}(0)$ is a $\prec$-minimizer in $S_{\cF_0}+\poRt\,$.

\pausa
This implies that $h$ is constant on $[0,1]$ and hence the segment 
$\lambda(\mathfrak{s}(x))$, $x\in [0,1]$, reduces to a point. 
Thus $\beta_i=\rho_i$ for every $i\in \IN{k'}\,$. Hence $S_\cG=\stau-S_\cR   $ 
and $S_\cF=S_{\cF_0}|_{\cR^\perp}\oplus \stau = S_1\,$. 
By Eq. \eqref{son autovectores en el rango} of 
Remark \ref{miraquienveiene},   there exists a $c\in \R_{\geq 0}$ such that 
$S_\cF\,f_i=\stau\,f_i=c\,f_i\,$ for $\n0+1\leq i\leq n$ (since $f_i\in \cR=
R(S_\cG)=R(\stau-S_\cR   )$ for these indexes). This last fact proves item 2 of the statement. 
\end{proof}

\begin{lem}\label{sihayhipvale}
\ID. Let $\cF=(\cF_0 \coma \cG)\in \cC_\ca(\cF_0)$ be a $d_P$-local 
minimizer of $P_f$ on $\cC_\ca(\cF_0)$. 
Let $\{\cG_i\}_{i\in \IN{p}}$ be a partition of $\cG$ into irreducible subfamilies, where 
$\cG_i=\{f_j\}_{j\in J_i}\,$ for a partition $\{J_i\}_{i\in \IN{p}}$ of the set of indexes $\{i:\ \n0+1\leq i\leq n\}$. 
Assume that 
$S_{\cG_i}$ and $S_0$ commute, for every $i\in \IN{p}\,$. 
Then there exist positive numbers 
$$ 
c_1\coma \dots \coma c_p \in  \R_{> 0} \peso{such that} 
S_\cF f_j=c_i\,f_j \ , \ \ j\in J_i \ , \ i\in \IN{p} \ .$$
\end{lem}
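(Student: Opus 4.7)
The plan is to reduce the claim to $p$ separate applications of Lemma \ref{prolem no se}, one for each irreducible piece $\cG_i$. Fix $i \in \IN{p}$ and regard the other pieces as part of the initial data by defining $\cF_0^{(i)} := \cF_0 \cup \bigcup_{j \neq i}\cG_j$ (as a finite sequence, after a suitable reindexing) and letting $\ca^{(i)} := (\|f_j\|^2)_{j \in J_i}$. Up to permutation of its elements, $\cF$ equals the completion $(\cF_0^{(i)},\cG_i) \in \cC_{\ca^{(i)}}(\cF_0^{(i)})$, and any $d_P$-small perturbation of $\cG_i$ that preserves the norms in $\ca^{(i)}$ lifts to a $d_P$-small perturbation of $\cF$ in $\cC_\ca(\cF_0)$ (with $\cF_0$ and the other $\cG_j$'s held fixed). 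Hence $\cF$ is also a $d_P$-local minimizer of $P_f$ on $\cC_{\ca^{(i)}}(\cF_0^{(i)})$, and a fortiori on the subset $\{(\cF_0^{(i)},\cG_i')\in\cC_{\ca^{(i)}}(\cF_0^{(i)}) : R(S_{\cG_i'}) \subseteq \cR_i\}$, where $\cR_i := \gen\{\cG_i\} = R(S_{\cG_i})$.

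Next I would verify that the hypotheses of Lemma \ref{prolem no se} apply to this smaller completion problem, with $\cR_i$ in the role of $\cR$. The subspaces $\cR_1,\ldots,\cR_p$ are mutually orthogonal by the very definition of a partition into irreducible subsequences, so $S_{\cG_j}$ vanishes on $\cR_i$ for every $j \neq i$; combined with the hypothesis that $\cR_i$ reduces $S_0$ (which follows from the commutation $S_0 S_{\cG_i} = S_{\cG_i} S_0$), this shows that $\cR_i$ reduces $S_{\cF_0^{(i)}} = S_0 + \sum_{j \neq i}S_{\cG_j}$. The frame operator $S_{\cG_i}$ commutes with $S_{\cF_0^{(i)}}$: commutation with $S_0$ holds by assumption, and $S_{\cG_i}S_{\cG_j}=S_{\cG_j}S_{\cG_i}=0$ for $j \neq i$ because the ranges are orthogonal. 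Finally, $\cG_i$ is irreducible by construction of the partition.

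Invoking Lemma \ref{prolem no se} then yields $c_i \in \R_{>0}$ such that the frame operator of the completed sequence, which is exactly our $S_\cF$, satisfies $S_\cF f_j = c_i f_j$ for every $j \in J_i$. Doing this for each $i \in \IN{p}$ produces the required constants $c_1,\ldots,c_p$.

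The main obstacle is bookkeeping: one has to check that each reduced problem $\cC_{\ca^{(i)}}(\cF_0^{(i)})$ is well-defined (in particular the rank condition $d - \rk S_{\cF_0^{(i)}} \leq |J_i|$, which follows because the $\cR_j$'s with $j \neq i$ together with $R(S_0)$ cover $R(S_\cF)^\perp$ up to a piece of dimension $\leq |J_i|$), and that the notion of ``$d_P$-local minimizer'' transfers correctly under the reindexing and restriction. Once this is in place, the commutation and orthogonality verifications needed to invoke Lemma \ref{prolem no se} are routine, and the result follows by iterating over $i$.
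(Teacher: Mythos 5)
Your proof is correct and matches the paper's approach: both reduce the claim to $p$ applications of Lemma \ref{prolem no se}, one per irreducible piece $\cG_i$, using the mutual orthogonality of the ranges $\cR_i = R(S_{\cG_i})$ and the commutation hypotheses to set up the restricted local-minimization problem on $\cR_i$. The only difference is packaging: the paper applies Lemma \ref{prolem no se} to $(\cF_0\coma \cG_i)\in\cC_{\cb_i}(\cF_0)$ and then uses $S_{\cG_l}f_j=0$ for $l\neq i$ and $j\in J_i$ to pass from $(S_{\cF_0}+S_{\cG_i})f_j=c_i\,f_j$ to $S_\cF f_j=c_i\,f_j$, whereas you fold $\bigcup_{l\neq i}\cG_l$ into the initial family $\cF_0^{(i)}$ so that the lemma yields $S_\cF f_j=c_i\,f_j$ directly, at the price of an extra check of the rank hypothesis for $\cF_0^{(i)}$.
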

\proof
Notice that, by construction, the ranges of the frame operators $S_{\cG_i}$ and $S_{\cG_j}$ are orthogonal whenever $i\neq j$.
Fix $i\in \IN{p}\, $. The hypothesis allows us to apply Lemma \ref{prolem no se} to the 
sequence $(\cF_0 \coma \cG_i)\in \cC_{\cb_i}(\cF_0)$, 
where $\cb_i=(\|f_j\|^2)_{j\in J_i}\,$. In this case we conclude  
that there exists $c_i\in \R_{>0}$ such that $(S_{\cF_0}+S_{\cG_i})\,f_j
=c_i\, f_j\,$, for every $j\in J_i\,$. Hence, 
$$
S_{\cF}\, f_j= (S_{\cF_0}+S_{\cG})\, f_j
=(S_{\cF_0}+\bigoplus_{\ l\in \IN{p}}S_{\cG_l})\,f_j
=(S_{\cF_0}+S_{\cG_i})\,f_j=c_i\, f_j \ , 
$$
for every $j\in J_i\,$. \QED

\subsection{A finite step algorithm to compute global minimizers} \label{sec consec conjetur}

In this section we obtain, as a consequence of Theorem \ref{vale la conjetur}, an algorithmic solution of the optimal frame completion problem with prescribed norms with respect to a general convex potential $P_f$. The key step is the introduction of the following finite set:

\begin{num}
\label{el efin}\rm 
\ID. In order to find the minimizers for the CP with parameters 
$(\cF_0 \coma \ca)$ we construct a finite set $\efin \inc(\R_{\geq  0}^d)^\uparrow $ as follows:

\pausa 
Set $r\in \IN{d}\,$. Consider a partition $\{K_i\}_{i\in \IN{p}}$ of the set 
$\{r,\ldots,d\}$ 
for some $1\leq p\leq (d-r)+1$  and define the subsequences 
of $\la = \la(S_{\cF_0})\,$ given by 
$$
\Lambda_i=\{ \lambda_j\}_{j\in K_i}\in \R_{\geq 0}^{|K_i|}\ ,  \peso{for every}  i\in \IN{p}\ .
$$
Consider also a partition $\{ J_i\}_{i\in \IN{p}}$ of the set $\{1,\ldots,k\}$ and define 
the subsequences of $\cb=(\alpha_i)_{i=1}^k\in \R^k$ given by 
$$
\cb_i=\{\alpha_j\}_{j\in J_i} \in \R_{>0 }^{|J_i|}\ ,  \peso{for every}  i\in \IN{p}\ .
$$
For each $i\in \IN{p}$  define $c_i=|K_i|^{-1} \cdot (\tr \, \Lambda_i +\tr  \, \cb_i)$ 
and $\Gamma_i=\{c_i-\lambda_j\}_{j\in K_i}$. 
Let 
\beq\label{mu stored}
\mu \in \R^d \peso{be given by} 
\mu_j= (\Gamma_i)_j  = c_i-\lambda_j \peso{if}  j \in K_i\ , 
\eeq
and $\mu_j=0$ if $j<r$. 
We now check whether for every $i\in \IN{p}$ it holds that:
\beq\label{tigres}
\Gamma_i\in \R_{\geq 0}^{|K_i|}\ \  , \quad \cb_i\prec\Gamma_i \peso{and that}
 \mu= \mu\ua \in(\R_{\geq 0}^d)^\uparrow \ .
\eeq
In this case 
we declare this  $\mu$ as a member of $\efin  $. Otherwise we drop this $\mu$.  
The set $\efin  $ is then obtained by this procedure, as we vary 
$1\leq r\leq d$ and the partitions previously considered. Therefore, 
$\efin  $ is a finite set.

\pausa
A straightforward computation using Proposition \ref{frame mayo} 
and Eq. \eqref{tigres} shows that
for every $\gamma \in \efin  $ 
there exists a completion  $\cF\,'=(\cF_0 \coma \cG\,')\in \cC_\ca^{\rm op}(\cF_0)$ 
such that $\la\ua(S_{\cG\,'}) = \gamma$ and $\la (S_{\cF\,'})
 = (\la +\gamma)\da$.
We remark that the set $\efin$  can be explicitly 
computed in a finite step algorithm, in terms of $\lambda=\lambda(S_{\cF_0})$ and $\ca$ 
(see Section \ref{bla bla} below for details). \EOE
\end{num}

\pausa
Fix now a strictly convex function $f:[0 \coma \infty)\rightarrow[0 \coma \infty)$. Recall that we denote by 
$F: \R_{\geq 0}^d \to \R_{\geq 0} $ the map given by $F(\gamma) = \sum _{i \in \IN{d}} f(\gamma_i)$ for every 
$\gamma \in \R_{\geq 0}^d\,$.

\begin{teo}\label{consec conjetur}  
\ID \ and let $\la=\la(S_{\cF_0})$.  Then 
\ben 
\item The vector $\mu = \mu(\lambda \coma \ca \coma f) \in (\R_{\ge0}^d)\ua\,$ of 
Theorem \ref{vale la conjetur} satisfies that $\mu \in \efin$.
\item Moreover,  this vector $\mu$ is uniquely determined by the equation 
\beq\label{el mu op}
F(\la + \mu ) = \min \ \{ F(\la+\gamma) : \gamma \in \efin   \ \} \ . 
\eeq
\een
That is, 
a completion $\cF=(\cF_0 \coma \cG)\in \cC_\ca(\cF_0)$ is a $P_f$ global 
minimizer if and only if $\cF\in \caop$,  
$\mu = \la^\uparrow(S_{\cG})\in \efin $ and it  satisfies Eq. \eqref{el mu op}.
\end{teo}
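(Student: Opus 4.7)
The plan is to extract, from the structural properties of a $P_f$-global minimizer given by Theorem \ref{vale la conjetur}, the combinatorial data $(r\coma \{K_i\}\coma \{J_i\})$ appearing in the definition of $\efin$ in \ref{el efin}, thereby exhibiting $\mu = \mu(\la\coma \ca\coma f)$ as an element of $\efin$. Given a global minimizer $\cF = (\cF_0\coma \cG)\in \caop$ with $\la\ua(S_\cG) = \mu$, Theorem \ref{vale la conjetur} provides an irreducible partition $\{\cG_i\}_{i\in \IN{p}}$ of $\cG$ indexed by $\{J_i\}_{i\in \IN{p}}$, along with constants $c_i > 0$ such that $S_\cF f_j = c_i f_j$ for $j\in J_i$ and such that each $S_{\cG_i}$ commutes with $S_{\cF_0}$. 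Because the $S_{\cG_i}$ pairwise commute (their ranges are orthogonal) and each commutes with $S_{\cF_0}$, I plan to produce a joint eigenbasis of $S_{\cF_0}$ and all the $S_{\cG_i}$; combining this with Theorem \ref{teocafop2}(3) and the freedom to reorder vectors inside joint eigenspaces, I obtain an ONB $\{v_j\}_{j\in \IN{d}}$ with $S_{\cF_0} v_j = \la_j v_j$ (for $\la = \la\da$), $S_\cG v_j = \mu_j v_j$ (for $\mu = \mu\ua$), and each $R(S_{\cG_i})$ spanned by $\{v_j : j\in K_i\}$ for some well-defined $K_i \inc \IN{d}$.

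The arithmetic data then falls out. On $R(S_{\cG_i})$, Theorem \ref{vale la conjetur}(3) forces $S_\cF = c_i\, I$, so $\mu_j = c_i - \la_j$ for every $j\in K_i$; and for $j\notin \bigcup_i K_i$ we have $v_j\notin R(S_\cG)$, hence $\mu_j = 0$. Since $\mu = \mu\ua$, the complement of $\bigcup_i K_i$ must be an initial segment, so $\bigcup_i K_i = \{r\coma \dots\coma d\}$ with $r = d - \rk\, S_\cG + 1$. The trace identity
\[
|K_i|\, c_i - \tr\, \Lambda_i \,=\, \tr\, S_{\cG_i} \,=\, \tr\, \cb_i \qquad \text{with } \Lambda_i = \{\la_j\}_{j\in K_i}
\]
yields $c_i = |K_i|\inv (\tr\,\Lambda_i + \tr\,\cb_i)$, exactly as prescribed in \ref{el efin}. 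To finish item 1, I check the conditions \eqref{tigres}: $\Gamma_i \in \R_{\geq 0}^{|K_i|}$ because $S_{\cG_i}\geq 0$; $\cb_i \prec \Gamma_i$ is the content of Proposition \ref{frame mayo} applied to the sequence $\cG_i$ inside the $|K_i|$-dimensional subspace $R(S_{\cG_i})$; and $\mu = \mu\ua$ is built into the construction. Hence $\mu \in \efin$.

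For item 2, I invoke the observation in \ref{el efin} that every $\gamma \in \efin$ is realized by some completion $\cF_\gamma = (\cF_0\coma \cG_\gamma) \in \caop$ with $\la\ua(S_{\cG_\gamma}) = \gamma$ and $\la(S_{\cF_\gamma}) = (\la + \gamma)\da$. By the permutation invariance of $F$, $P_f(\cF_\gamma) = F((\la+\gamma)\da) = F(\la+\gamma)$. Since $\cF$ attains the global minimum, $F(\la+\mu) = P_f(\cF) \leq P_f(\cF_\gamma) = F(\la+\gamma)$ for every $\gamma\in\efin$, so $\mu$ solves \eqref{el mu op}. If some other $\gamma\in\efin$ also attained the minimum, then $\cF_\gamma$ would be a global minimizer, forcing $\gamma = \la\ua(S_{\cG_\gamma}) = \mu$ by Theorem \ref{vale la conjetur}(1); this yields uniqueness and the ``iff'' characterization at the end of the statement. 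I expect the main technical obstacle to be the basis-refinement step in the first paragraph: one must choose the ONB supplied by Theorem \ref{teocafop2}(3) compatibly with the orthogonal decomposition $R(S_\cG) = \bigoplus R(S_{\cG_i})$, and verify that after the refinement the ordering condition $\mu = \mu\ua$ is preserved so that the description in \ref{el efin} genuinely applies.
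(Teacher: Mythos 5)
Your proposal is correct and follows essentially the same route as the paper's proof: it extracts the partition data $(r, \{K_i\}, \{J_i\})$ from the irreducible decomposition of $\cG$ given by Theorem \ref{vale la conjetur}, uses the eigenvector property $S_\cF f_j = c_i f_j$ to force $\mu_j = c_i - \la_j$ on $K_i$, derives the trace formula for $c_i$, invokes Proposition \ref{frame mayo} for $\cb_i \prec \Gamma_i$, and for item 2 uses realizability of every $\gamma \in \efin$ together with the uniqueness in Theorem \ref{vale la conjetur}(1). The only stylistic difference is that you make explicit the need to build a joint eigenbasis of $S_{\cF_0}$ and all the $S_{\cG_i}$ compatible with the ordering $\mu = \mu\ua$, a step the paper treats more tersely when it asserts the existence of the partition $\{K_i\}$; this is a fair point of care and does not change the argument.
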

\begin{proof}
Denote by $\mu = \mu(\lambda \coma \ca \coma f) \in (\R_{\ge0}^d)\ua\,$, the vector of Theorem \ref{vale la conjetur}.
Let 
$\cF=(\cF_0 \coma \cG)$ be a global minimizer of $P_f$ on $\cC_\ca(\cF_0)$. In this case, 
by Theorem \ref{dale que va}, $S_{\cF_0}$ and $S_{\cG}$ commute 
and $\lambda(S_\cF)=(\lambda+\mu)^\downarrow$, where $\mu=\mu^\uparrow\in \R_{\geq 0}^d$ 
is such that $\lambda(S_{\cG})=\mu^\downarrow$.

\pausa
Let $\{\cG_i\}_{i\in \IN{p}}$ be a partition of $\cG$ into irreducible subfamilies, 
corresponding to the partition $\{J_i\}_{i\in \IN{p}}$ of $\{\n0+1,\ldots,n\}$, for some $1\leq p\leq d$. 
Notice that in this case $S_{\cG}=\oplus_{i\in \IN{p}} S_{\cG_i}\,$. 
This last fact shows that there exists a partition $\{K_i\}_{i\in \IN{p}}$ such that 
$\lambda(S_{\cG_i})=(\Gamma_i \coma 0_i)$ where $\Gamma_i=\{\mu_j\}_{j\in K_i}$ and $0_i\in \R^{d-|K_i|}$ 
for every $i \in \IN{p}\,$. Then $\mu=(\oplus_{i\in\IN{p}}\Gamma_i)^\uparrow$.

\pausa
Fix $i\in\IN{p}\,$. Theorem \ref{vale la conjetur} implies that there exists $c_i>0$ such that 
$S_\cF f_j=c_i\,f_j$ for every $j\in J_i$ and $S_{\cG_i}$ and that $S_{\cF_0}$ commute. 
This fact implies that $S_\cF|_{R_i}=c_i\,I_{R_i}$, where $R_i=R(S_{\cG_i})$ and 
$P_{R_i}$ denotes the identity operator on $R_i\,$. Therefore, we conclude that 
$c_i=\lambda_j+\mu_j$ for every $j\in K_i\,$. Hence $\Gamma_i=
(c_i-\lambda_j)_{j\in K_i}$ and 
$$
c_i=|K_i|^{-1}\cdot \sum_{j\in K_i} (\lambda_j+\mu_j)=
|K_i|^{-1}\cdot (\tr \, \Lambda_i +\tr \, \{ \alpha_j\}_{j\in J_i}) \ ,
$$
since $S_{\cG_i}=\sum_{j\in J_i}f_j\otimes f_j\,$. This shows that 
$\tr \, S_{\cG_i} = \sum_{j\in J_i} \|f_j\|^2=\sum_{j\in J_i} \alpha_j\,$. 
Moreover, the previous identity and Proposition \ref{frame mayo} imply that 
$\cb_i\prec\Gamma_i\,$, 
where $\cb_i=\{ \alpha_j\}_{j\in J_i}\,$.  
Hence, we conclude that the vector $\mu $ of Theorem \ref{teo sobre unico espectro} satisfies that 
$\mu\in \efin  $, as defined in \ref{el efin}. 

\pausa
As we mentioned before, for every $\gamma \in \efin  $ 
there exists a completion  $\cF\,'=(\cF_0 \coma \cG\,')\in \cC_\ca^{\rm op}(\cF_0)$ 
such that $\la\ua(S_{\cG\,'}) = \gamma$ and $\la (S_{\cF\,'})
 = (\la +\gamma)\da$. Hence the vector $\mu$ 
satisfies Eq. \eqref{el mu op}. 
The converse implication 
now follows from item 1 and Theorem \ref{teo sobre unico espectro}. 
\end{proof}

\begin{rem}\label{si hay minmayo efin hay minmayo}
Let  $\efin  \inc(\R_{\geq  0}^d)^\uparrow$ be the finite set defined in 
\ref{el efin}  and assume that there exists $\mu\in \efin  $ such that 
$\la + \mu$ is a 
$\prec$-minimizer for the set $\la + \efin$ i.e., such that 
\beq\label{posta}
\la+ \mu\prec \la + \ga  \peso{for every}
\ga\in \efin   \ . 
\eeq
Then, by Theorem \ref{consec conjetur} and the comments in Section \ref{subsec 2.2} 
we see that $\mu$ coincides with $\mu(\lambda \coma \ca \coma f)$, the vector of Theorem 
\ref{vale la conjetur}, for all strictly convex functions  
$f:[0 \coma \infty)\rightarrow[0 \coma \infty)$. 

\pausa
That is, given an arbitrary  strictly convex functions  
$f:[0 \coma \infty)\rightarrow[0 \coma \infty)$ then
a completion  $\cF=(\cF_0 \coma \cG)\in \cC_\ca(\cF_0)$ is a global minimizer of $P_f$ 
in $\cC_\ca(\cF_0)$ if and only if $\la^\uparrow(S_{\cG})=\mu$. 
Moreover, a similar argument shows that in this case 
$$
\lambda(S_{\cF_0})+\mu \in \Lambda_\ca^{\rm op}(\lambda(S_{\cF_0})) \peso{is a 
$\prec$-minimizer in}  \Lambda_\ca^{\rm op}(\lambda(S_{\cF_0}))\ .
$$
Therefore $\mu$ (resp. $\lambda(S_{\cF_0})+\mu$) is an structural (spectral) 
solution to the problem of minimizing $P_f\,$, in the sense that the solution 
does not depend of the particular choice of the strictly convex function $f$. 
  
\pausa  
Such structural solutions exist if we assume that the completion problem is feasible (see Remark \ref{caso feasible}). Numerical examples suggest that such a majorization 
minimizer always exists (see Section \ref{bla bla}). These facts induce the following conjecture:
\EOE
\end{rem}

\begin{conj}\label{conj:hay min}\rm \ID. Then there exists $\mu \in \efin$ such that 
$\la_{\cF_0} + \mu $ satisfies the majorization minimality of Eq. \eqref{posta}.  \QED
\end{conj}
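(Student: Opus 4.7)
The plan is to reduce the conjecture to a canonical-partition statement and then exploit block-additivity of majorization. First I would index the elements of $\efin$ by their defining data $(r\coma \{K_i\}_{i\in\IN{p}}\coma \{J_i\}_{i\in\IN{p}})$ as in \ref{el efin}. For such data, $(\la+\mu)_j = c_i$ whenever $j\in K_i$ and $(\la+\mu)_j = \la_j$ for $j<r$; moreover by Corollary \ref{loqueimporta} applied to any global minimizer realizing this $\mu$, each block $K_i$ on which $\mu$ is strictly positive must be contained in a level set of $\la$. The natural candidate for the $\prec$-minimum of $\la+\efin$ is then the vector $\mu^\star$ coming from the \emph{coarsest} admissible partition compatible with this level-set constraint, i.e.\ the one obtained by the greedy procedure that merges two neighboring blocks $K_i\coma K_{i+1}$ whenever merging keeps the resulting $\mu$ in $(\R_{\geq 0}^d)^\uparrow$ and keeps $\cb_i\cup \cb_{i+1}\prec \Gamma_{i}\cup \Gamma_{i+1}$.

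Second, I would prove that refining a block of the optimal partition majorizes the coarse one. Concretely, fix any $\mu\in\efin$ with data $(\{K_i\}\coma\{J_i\})$, and suppose we split one block $K_i$ into two sub-blocks $K_i'\coma K_i''$ with a compatible splitting $J_i=J_i'\sqcup J_i''$ of the norms; the resulting $\mu\,'\in\efin$ satisfies $\la+\mu \prec \la+\mu\,'$ on the indices of $K_i$ (and equality elsewhere), by Remark \ref{rem doublystohasctic} together with the elementary fact that replacing a constant block $(c_i\coma\dots\coma c_i)$ by two constant blocks of the same total trace strictly submajorizes the uniform one. Iterating this step and using block-additivity of majorization yields $\la+\mu^\star \prec \la+\gamma$ for every $\gamma\in \efin$ reachable from $\mu^\star$ by a sequence of such splits.

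Third, I would settle that every $\gamma\in\efin$ is reachable from $\mu^\star$ by such splits, by an induction on the cardinality of the partition defining $\gamma$. The base case (single block) is exactly the feasible case of Remark \ref{caso feasible}, where the $\prec$-minimality inside the block is the content of \cite[Theorem 3.12]{MRS4}. The inductive step uses that the greedy merging procedure is confluent: any two admissible partitions admit a common admissible refinement (concatenate the partitions and then merge back following the level-set constraint), which provides the intermediate step linking $\gamma$ to $\mu^\star$ through two refinement chains.

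The main obstacle will be the confluence claim in the third step. Admissibility of a partition $(\{K_i\}\coma\{J_i\})$ couples the decomposition of $\la$ with the decomposition of $\cb$ through the majorization requirements $\cb_i\prec \Gamma_i$, and two admissible partitions may decompose $\cb$ in genuinely incompatible ways even when the underlying $\la$-partitions admit a common refinement. To rule this out one would need a lattice-theoretic statement about pairs of partitions satisfying coupled majorization constraints, which is precisely the combinatorial content missing from our current approach and is the reason the statement remains conjectural. A softer route worth attempting would be to bypass the partition combinatorics entirely by constructing a continuous $\prec$-monotone path in $\cS\cC_\ca(\cF_0)$ joining any two optima; but constructing such a path amounts to solving the lifting problem stated after Theorem \ref{ds min loc}, which is open as well.
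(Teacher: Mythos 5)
This statement is a \emph{conjecture} in the paper, not a theorem: the authors do not give a proof, and the only evidence they offer is Step 4 of the algorithm in \ref{algo} together with the reported observation that the test succeeds in all their numerical examples. So there is no internal proof to compare your argument against. That said, a few comments on your sketch.

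Your Step 2 (refinement increases in majorization) is sound as far as it goes: a constant vector on a block is $\prec$-minimal among vectors of fixed trace on that block, and Remark \ref{rem doublystohasctic} gives the concatenation rule $x_i\prec y_i \Rightarrow (x_1,x_2)\prec(y_1,y_2)$. But even at this stage there is a hidden issue: the quantity being compared in Eq.~\eqref{posta} is $(\la+\mu)$, and after the split the entries $\la_j+c_i'$, $\la_j+c_i''$ need not stay interleaved with the rest of $\la+\mu$ in a way that lets you conclude the global majorization from the block-local one without re-sorting; Remark \ref{rem doublystohasctic} handles this, but only after you verify the new $\mu'$ lies in $(\R_{\geq 0}^d)^\uparrow$ and still satisfies the block majorization constraints $\cb_i'\prec\Gamma_i'$, $\cb_i''\prec\Gamma_i''$. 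Not every split of $(K_i,J_i)$ is admissible, so the poset of admissible partitions is genuinely constrained.

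Your Step 1 has a circularity worth flagging. You invoke Corollary \ref{loqueimporta} (and, implicitly, Eq.~\eqref{pegaditos} of Theorem \ref{teo sobre unico espectro}) to assert that the blocks of the candidate $\mu^\star$ where $\mu^\star>0$ lie inside level sets of $\la$. But those results apply only to the vector $\mu(\la,\cb,f)$ realized by a global $P_f$-minimizer; they say nothing about an arbitrary element of $\efin$, and the construction in \ref{el efin} does not build in any level-set constraint. Using them to \emph{define} $\mu^\star$ is permissible as a heuristic for a candidate, but it does not help show that every $\gamma\in\efin$ is reachable from $\mu^\star$ by admissible refinements. In fact different $\gamma\in\efin$ may have different values of the cut index $r$ and genuinely incompatible partitions of the tail of $\la$, so they need not all sit above a common coarsest partition.

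Your Step 3 is where everything must happen, and as you say yourself, confluence of the merge procedure is exactly the missing combinatorial fact. Merging two adjacent blocks can fail to preserve admissibility because $\cb_i\prec\Gamma_i$ and $\cb_{i+1}\prec\Gamma_{i+1}$ do not imply $\cb_i\cup\cb_{i+1}\prec\Gamma$ for the averaged block $\Gamma$, and it can also break the monotonicity $\mu=\mu\ua$. So the greedy procedure may not terminate at a unique coarsest admissible partition, and two admissible partitions may have no common admissible coarsening. Your proposed fallback (a continuous $\prec$-monotone path in $\cS\cC_\ca(\cF_0)$) is exactly the lifting problem in Eq.~\eqref{eclift}, which the paper also leaves open. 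In short: your diagnosis of the obstacle is accurate, the partial observations you make are correct modulo the caveats above, but this is not a proof, and indeed the paper records it only as Conjecture~\ref{conj:hay min}.
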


\subsection{Algorithmic implementation: some examples and conjectures.}\label{bla bla}
As it was described in the previous section, an algorithm can be developed in order to compute explicitly the set $\efin  $ and the finite set of possible minimizers $\nu=\la+\mu$, $\mu\in \efin  $ constructed from it. A proposed algorithm scheme is the following:

\pausa
\begin{num}\label{algo} \rm
Given the initial data $\la\in (\R_{\geq 0}^d)^\downarrow$ and $\cb=(\alpha_i)_{i=1}^k\,$, 
we set $n=k+\n0$  as before.

\begin{description}
\item[Step 1.]
For each $r\in \IN{r}$ set $\la(r)=(\la_j)_{j=r}^d\,$. 
For such tail of $\la$, of length $l=d-r+1$, we 
consider the minimum $m=\{l,\; k\}$. Now, for each $p\in \IN{m}\,$, 
\bit
\item We compute all possible partitions of $\la(r)$ in $p$ parts. 
We do the same with $\cb$.
\item Fixed a partition for $\la(r)$ and one of $\cb$, we pair the sets 
of both partitions and compute for every pair the constant $c$ 
and check majorization as it was described in Eq. \eqref{tigres}.
\item In case that the majorization conditions are satisfied for all 
pairs in these partitions for $\la(r)$ and $\cb$, 
the vector $\mu$ is constructed 
as in Eq. \eqref{mu stored}. 
\item If $\mu = \mu\ua$ then is $\mu$ stored in the set $\efin$.  
\end{itemize}
\item[Step 2.] The set $\nfin =\{\la+\mu :  \mu\in \efin  \}$ is constructed from that stored data.
\item[Step 3.] We search for the vector   $\nu \in \nfin$ of minimum euclidean norm.
\end{description}
Then this $\nu$ is a minimizer for the map $F(x) = \sum _{i \in \IN{d}} x_i^2$ associated 
to the frame potential 
on the set $\{\la(S_\cF): \cF\in  \cC_\ca(\cF_0)\}$. Moreover   
$\mu = \nu - \la$ is the vector of Theorem \ref{teo sobre unico espectro}, which 
allows to construct (via the Schur-Horn algorithm) optimal 
completions in $\cC_\ca^{\rm op}(\cF_0)$ with respect to the Benedetto-Fickus's frame potential. By Theorem \ref{consec conjetur}, the globarl minimizers corresponding to a different potential in $\cC_\ca^{\rm op}(\cF_0)$ can be computed similarly, i.e. by minimizing  the corresponding convex function on  the set $\nfin$. 

\pausa
{\bf Step 4.} 
Finally, we  test if the vector $\nu$ obtained in  Step 3 is a minimizer for majorization 
in $\nfin$. In that case, the algorithm succeed in finding the minimizer 
for every convex potential $P_f$. \EOE
\end{num}

\pausa
In all examples in which we have applied the previous algorithm, the Step 4  confirmed that the minimizer for the frame potential in $\nfin$ is actually the minimizer for majorization, which suggests  a positive answer to the Conjecture \ref{conj:hay min} (see the comments in Remark \ref{si hay minmayo efin hay minmayo}).

\begin{exa}\label{ejemp Acha}
Consider the set of vectors $\cF_0\in \RS(7\coma 5)$ given in \eqref{ejemF0} and let $\cb=\{3.5\coma 
2\}$ as it was pointed out in Subsection \ref{problemon}, with that initial data, the completion problem is not feasible. Nevertheless, if we apply the algorithm described above, the optimal spectrum $\mu$ and $\nu$ can be computed, since we can describe the set $\nfin$.

\pausa
Indeed in this case $\nfin=\{(9\coma 5\coma 4.5\coma  4\coma  4)\,,\; (9\coma  6.5\coma  5\coma  4\coma  2)\}$ so 
$\nu=(9\coma 5\coma 4.5\coma 4\coma  4)$ (where $\mu=(0\coma 0\coma 0\coma 2\coma 3.5))$ and an optimal completion is given by:
\beq
T_{\cF_1}^* = \mbox{ \scriptsize $  
\left[
\begin{array}{rr}
        0.0441&   -1.3541\\
        0.6901&    0.5701\\
       -1.2093&    0.0887\\
       -0.0569&    0.8836\\
        0.2371&   -0.7435\\
\end{array}
\right]$}
\ . 
\eeq
In this case, the vector $\mu$ is constructed with the partitions $K_1=\{2\}$, $K_2\{1\}$ of the two smaller eigenvalues in $\la=\la(S_{\cF_0})=(9\coma  5\coma  4\coma  2\coma  1) $ which are paired with $J_1=\{2\}$ and $J_2=\{3.5\}$ of $\ca$, using the notation introduced in Section \ref{sec consec conjetur}.

\pausa 
If we now set $\cb=(2\coma  \frac{1}{4}\coma  \frac{1}{4}\coma  \frac{1}{4})$, again the 
problem is not feasible (see \cite{MRS4}). In this case the algorithm yields a 
$\nfin$ with 23 elements with a minimizer for majorization given by $\nu=(9\coma  5\coma  4\coma  3\coma  2.75)$.  In this case, the partitions of $\la$ are $K_1$ and $K_2$ of previous example, and $J_1=\{\frac{1}{4}\coma  \frac{1}{4}\coma  \frac{1}{4}\}$ and $J_2=\{2\}$ is the partition of $\ca$. Finally,  
an optimal  completion of $\cF_0$ with prescribed norms is given by:
\beq
T_{\cF_1}^* = \mbox{ \scriptsize  $
\left[
 \begin{array}{rrrr}
       0.0156&    0.0156&    0.0156&   -1.0236\\
       0.2440&    0.2440&    0.2440&    0.4310\\
      -0.4275&   -0.4275&   -0.4275&    0.0670\\
      -0.0201&   -0.0201&   -0.0201&    0.6679\\
       0.0838&    0.0838&    0.0838&   -0.5620\\ 
\end{array}
 \right]$}
 \ . 
 \eeq
\end{exa}

\begin{exa}  
If $\cb=( 5.35\coma   4.66\coma   3.2\coma   2.5\coma    1.2\coma  1\coma    0.65)$
 and let $\cF_0$ be any family in $\RS(\n0\coma 6) $ such that $\la=\la(S_{\cF_0})=( 5.75\coma    5.4\coma   4.25\coma    4.25\coma    3\coma   2)$, (this is also a non-feasible example) then $\nfin$ has  744 elements, and a minimizer is 
$\nu=(7.505\coma 7.505\coma 7.45\coma 6.9167\coma 6.9167\coma 6.9167)$.
In this example, the partitions for $\la$ ($r_0=1$) and $\ca$ involved in the computation of the optimal $\mu$ are $K_1=\{5.75 \coma 5.4 \coma 4.25\}$, $K_2=\{4.25\}$ and $K_3=\{3\coma 2\}$ and $J_1=\{2.5\coma 1.2\coma 1\coma 0.65\}$, $J_2=\{3.2\}$ and $J_3=\{5.35\coma 4.66\}$ respectively. 
 \EOE
\end{exa}

\pausa
It is worth to note that the number of iterations done in Step 1 grows 
rapidly with $d$ and $k$, and the size of $\nfin$ also increases. As a consequence of these facts,  the  algorithm described in \ref{algo}
is hard to implement for completion problems involving a large number of prescribed norms or for completion problems in $\C^d$ for large $d$.
Nevertheless, in the previous examples (and several others considered for this work) it turned out (besides the fact that Conjecture  \ref{conj:hay min} is verified in all examples) that the index-partition of $\la$ and $\cb$ in the $\prec$-minimizer consist of sets of {\bf consecutive} elements, both for $\la$ and $\cb$. Moreover, in all examples the partitions are paired in such a way that the partitions with the greater elements of $\la$ corresponds to those of $\ca$ with the smaller entries (see the description of $\Lambda_i$ and $J_i$ in previous examples). Moreover, in all examples considered, the minimizer has the property that the sets of vectors corresponding to the partitions with the greater norms of $\ca$ are linearly independent, with the exception of the last partition of $\ca$. This structure is consistent with the  solution for the classical completion problem with  $\cF_0=\emptyset$ ( see \cite{BF,Phys,MR}).

\pausa
This allows to develop a faster algorithm which tests a smaller set of partitions for $\la$ and $\cb$ which reduces considerably the time of computation and data storage. 
Thus, our numerical computations lead to the following Conjecture for the construction of the $\prec$-minimizer:

\begin{conj} \label{conjeturando}
\ID , and assume that $\cb$ is arranged in decreasing order. 
Then, using the notations of \ref{el efin}, the  minimizing vector $\mu \in \efin$ of Theorem \ref{consec conjetur} satisfies that: 
\ben 
\item It is constructed from consecutive partitions of $\la$ and $\cb$. In other words, 
that each set $J_j$ and $K_j$  
in the partitions  $\{J_j\}_{j\in \IN{p}}$ and $\{K_j\}_{j\in \IN{p}}$ given in  \ref{el efin} 
describing $\mu$, consists of { consecutive} indexes.  
\item The partitions 
of $\la$ and $\cb$ are  paired in opposite order: the sets in the partition of $\la$ with the 
larger elements  are compared with those sets in the correspondent partition of $\cb$ with the smaller elements. Moreover, the correspondent sets in both partitions  have the same number of elements, except possibly the  sets with the smallest and greatest  entries of $\ca$ and $\la$ respectively. More explicitly,  there exists $1\leq r_0\leq d$ such that $m=d-r_0+1\leq k$ and 
a sequence $r_0\leq r_1< \ldots< r_{p}= d$ such that:
\[
K_j=\{r_{j-1}+1\coma \ldots \coma  r_j\} \ ,\quad 
J_{j}=\{d-r_j+1\coma  \ldots \coma d-r_{j-1}\} \ ,\peso{for} 2\leq j\leq p\  , 
\]
\[
 K_1=\{r_0 \coma \ldots \coma  r_1\} \ , \ J_1=\{d-r_1+1\coma \ldots\coma  k\} \ ,
\] 
and such that $\mu$ is constructed as in \ref{el efin} in terms of $\{K_j\}_{j\in \IN{p}}$ 
and $\{J_j\}_{j\in \IN{p}}\,$.
\EOE
\een
\end{conj}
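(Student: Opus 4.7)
The plan is to attack Conjecture \ref{conjeturando} by a local exchange argument on the finite set $\efin$, combined with classical rearrangement inequalities. The two assertions are most naturally attacked in reverse order: first the opposite-order pairing of part 2, then the consecutivity of blocks in part 1.

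For part 2, I would fix a $\la$-partition $\{K_j\}_{j\in \IN{p}}$ and vary only the $\cb$-partition $\{J_j\}_{j\in \IN{p}}$ among those with matching block sizes $|J_j|=|K_j|$ (setting aside the boundary flexibility allowed on $J_1$ and $K_1$). Each such pairing produces constants $c_j=|K_j|\inv(\tr \Lambda_j+\tr \cb_j)$ as in \ref{el efin}, and $F(\la+\mu)=\sum_j |K_j|f(c_j)$. Since $\sum_j |K_j|c_j=\tr \la+\tr \cb$ is invariant under repairing, the problem reduces to minimizing $\sum_j |K_j|f(c_j)$ as we permute the assignment of $\{\tr \cb_j\}$ to $\{\tr \Lambda_j\}$. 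A Hardy--Littlewood--P\'olya-type rearrangement argument, valid for strictly convex $f$, then forces the optimal pairing to send the block with largest $\tr \Lambda_j/|K_j|$ to the block with smallest $\tr \cb_j/|K_j|$, which is exactly the opposite-ordering rule of the conjecture.

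For part 1, I would argue by contradiction. Suppose the optimal $K_i$ is not consecutive, so that there exist indices $a<b<c$ with $a,c\in K_i$ and $b\in K_l$ for some $l\neq i$. Swapping $b$ with $a$ (or $c$) produces new partitions $K_i',K_l'$ with new constants $c_i',c_l'$, hence a new candidate vector $\mu'\in \efin$. Using that $\la$ is decreasing and $f$ is strictly convex, a careful bookkeeping of the change $F(\la+\mu')-F(\la+\mu)$ should show that some admissible swap strictly decreases $F$, contradicting the minimality of $\mu$. If a naive swap violates the admissibility of $\efin$, namely $\mu'=(\mu')^\uparrow$ and $\cb_j\prec\Gamma_j$ for every $j$, one compensates with a simultaneous rearrangement of the paired $\cb$-blocks whose existence is guaranteed by the analysis of part 2.

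The main obstacle is the tight interaction between the combinatorial swap and the majorization/monotonicity constraints that cut out $\efin$. Ties in $\la$ and coincidences among the $c_j$'s (reflected in \eqref{pegaditos} of Theorem \ref{teo sobre unico espectro}) will force a delicate case analysis, and the swap must be tailored to respect them. A natural intermediate step that would simplify matters considerably is to first settle Conjecture \ref{conj:hay min}: once a $\prec$-minimizer is known to exist in $\efin$, the block structure of the optimum can be read off from the repeated values of $(\la+\mu)^\downarrow$ via Theorem \ref{vale la conjetur}(3), and the consecutivity of the $K_j$'s then follows essentially from $\la$ being decreasing together with $\mu=\mu^\uparrow$, leaving only the opposite-ordering assertion to be obtained from the rearrangement inequality above.
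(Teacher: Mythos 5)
The paper does not prove Conjecture \ref{conjeturando}: it is stated explicitly as an \emph{open} conjecture, supported only by numerical experiments (``our numerical computations lead to the following Conjecture''), and the subsequent Example merely verifies the prediction on one instance. So there is no ``paper's own proof'' to compare against; what you have written is, accordingly, a research plan rather than an argument, and it should be evaluated as such.

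As a plan, your outline is plausible in spirit but contains gaps you will need to close before it becomes a proof.  For part 2, the quantity to be minimized once the $\{K_j\}$ are fixed is $\sum_j |K_j|\,f\bigl(a_j + \tr\cb_{\pi(j)}/|K_j|\bigr)$ with $a_j = \tr\Lambda_j/|K_j|$, subject to $\sum_j |K_j|\,c_j$ being fixed. This is \emph{not} a Hardy--Littlewood--P\'olya rearrangement of a bilinear expression, and when the weights $|K_j|$ differ the classical inequality does not apply off the shelf; you would need an ad hoc exchange lemma. Moreover the conjecture explicitly allows $|J_1|\ne|K_1|$ and $|J_p|\ne|K_p|$, so the premise ``vary only among pairings with matching block sizes'' already excludes part of the structure you are trying to establish. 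Finally, the feasibility conditions \eqref{tigres} (nonnegativity of $\Gamma_i$, $\cb_i\prec\Gamma_i$, and $\mu=\mu^\uparrow$) cut out the set of admissible re-pairings, so the exchange you propose may leave $\efin$; you mention compensating ``with a simultaneous rearrangement of the paired $\cb$-blocks,'' but that compensation is exactly the content you are trying to prove and is left unspecified.  For part 1, the swap argument has the same admissibility problem and, in the presence of ties among the $\la_j$ or among the $c_j$ (the situation addressed by \eqref{pegaditos} of Theorem \ref{teo sobre unico espectro}), the sign of $F(\la+\mu')-F(\la+\mu)$ need not be strict, so the contradiction does not follow automatically. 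Your own closing remark, that the cleanest route goes through Conjecture \ref{conj:hay min}, makes the circularity explicit: that statement is likewise open in the paper, so invoking it does not reduce the difficulty. In short, this is a reasonable sketch of where one might look, but each of its three pillars (weighted rearrangement, admissibility-preserving swaps, existence of a $\prec$-minimizer in $\efin$) is currently a conjecture in its own right.
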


\pausa
In the following example we verify that the algorithm implemented following the scheme in \ref{algo} and the simplified (and faster) version of this algorithm that assumes that Conjecture \ref{conjeturando} holds, produce the same solution to the optimal completion problem with respect to the Benedetto-Fickus' frame potential.

\begin{exa}
Given the initial data 
$$
\la=\la(S_{\cF_0})=(7\coma    6\coma    5.5\coma    4\coma    2.5\coma    1\coma    0.5\coma    0.3)
\py \ca=(5\coma   4.5\coma    1.2\coma    1\coma    0.8\coma    0.5)\ ,
$$ 
then applying the algorithm described in \ref{algo} we obtain that the optimal completion 
with prescribed norms $\cF=(\cF_0\coma \cG)$ has eigenvalues 
$\nu=(7\coma 6\coma 5.5\coma 5.3\coma 5\coma 4\coma 3.5\coma 3.5)$. If we only check the 
partitions described in Conjecture \ref{conjeturando}, then we obtain the same optimal 
eigenvalues $\nu$, with the partitions $J_1=\{1.2\coma 1\coma 0.8\coma 0.5\}$, 
$J_2=\{4.5\}$, $J_3=\{5\}$ and $K_1=\{2.5\coma 1 \}$, $K_2=\{0.5\}$, $K_3=\{0.3\}$ 
for $\ca$ and $\la$ respectively ($r_0=5$). But there are only 5 cases constructed 
from this kind of partitions   in a set $\nfin$ with 322 elements. \EOE
\end{exa}

\section{Appendix: Equality in Lindskii's inequality}\label{secAppendix}
Fix $S_0\in \matpos$. In this section we characterize those matrices 
\beq\label{los S1}
S_1 \in \matpos \peso{such that} 
\la(S_0+S_1) = \Big(\, \la\da(S_0)+ \la\ua(S_1)\,\Big)^\downarrow  \ .
\eeq
If $S_1 \in \matpos$ satisfies Eq. \eqref{los S1} then we say that $S_1$ is 
an {\bf optimal matching matrix} for $S_0\,$. Note that optimal matching 
matrices correspond to the cases of equality in Lindskii's inequality, 
as stated in Theorem \ref{LindsTeo}. 
 
\pausa
Although we have defined this notion for positive matrices (since we interested 
in its application to frame operators) similar definitions and conclusions 
holds for general hermitian matrices (by translations by convenient multiples of the identity).

\subsection{Optimal matching matrices commute}\label{lindskii para matrices}

In this section we study the case of equality in Lindskii's inequality and show that if $S_1$
is an optimal matching for $S_0$ (i.e. $S_1$ is as in Eq. \eqref{los S1}) then $S_0\,S_1= S_1\,S_0\,$.

\pausa
We begin by revisiting some classical matrix analysis results. We shall give 
short proofs of them in order to handle these proofs for the equality cases in 
which we are interested here. 
\begin{lem}[Weyl's inequalities] \label{lemWineq}
Let $A,\,B\in \matsad$. Then,
\beq\label{eqW1}
\lambda_j(A+B)\leq \lambda_i(A)+\lambda_{j-i+1}(B) \peso{for} i\leq j\,,
\eeq
\beq\label{eqW2}
\lambda_j(A+B)\geq \lambda_i(A)+\lambda_{j-i+d}(B) \peso{for} i\geq j\,.
\eeq
Moreover, if there exists 
$i\leq j$ (resp. $i\geq j$) such that 
\beq\label{eqW3}\lambda_j(A+B)=\lambda_i(A)+\lambda_{j-i+1}(B)\eeq
(resp. $\lambda_j(A+B)= \lambda_i(A)+\lambda_{j-i+d}\,(B)$) 
then there exists a unit vector $x$ such that $$ (A+B)\,x=\lambda_j(A+B)\,x \, , \  A\,x=\lambda_i(A)\,x \, , \ B\,x=\lambda_{j-i+1}(B)\,x \ ,$$
(resp. $(A+B)\,x=\lambda_j(A+B)\,x \, , \  A\,x=\lambda_i(A)\,x \, , \ B\,x=\lambda_{j-i+d}\,(B)\,x$).
\end{lem}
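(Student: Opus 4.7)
The plan is to prove both Weyl inequalities by combining the Courant-Fischer min-max (resp.\ max-min) characterization of eigenvalues with a dimension-counting argument on intersections of eigenspaces, and then to track the case of equality carefully in order to extract a common eigenvector of $A$, $B$ and $A+B$.

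For the first inequality \eqref{eqW1}, fix ONB's $\{u_k\}_{k\in \IN{d}}$ and $\{v_k\}_{k\in \IN{d}}$ of eigenvectors with $Au_k=\lambda_k(A)u_k$ and $Bv_k=\lambda_k(B)v_k$, and set
\[
V_1=\gen\{u_i,u_{i+1},\dots,u_d\},\qquad V_2=\gen\{v_{j-i+1},v_{j-i+2},\dots,v_d\}\ .
\]
These have dimensions $d-i+1$ and $d-j+i$, so $\dim(V_1\cap V_2)\geq d-j+1$. For a unit vector $x\in V_1\cap V_2$ the bounds $\langle Ax,x\rangle\leq \lambda_i(A)$ and $\langle Bx,x\rangle\leq \lambda_{j-i+1}(B)$ hold, since the eigenvalues of $A$ restricted to $V_1$ are $\lambda_i(A)\geq\lambda_{i+1}(A)\geq\dots\geq \lambda_d(A)$ and likewise for $B|_{V_2}$. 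Picking any subspace $W\inc V_1\cap V_2$ of dimension exactly $d-j+1$ and applying the min-max characterization, we obtain
\[
\lambda_j(A+B)\ \leq\ \max_{x\in W,\,\|x\|=1}\langle(A+B)x,x\rangle\ \leq\ \lambda_i(A)+\lambda_{j-i+1}(B)\ ,
\]
which is \eqref{eqW1}. For the reverse inequality \eqref{eqW2}, with $i\geq j$, the symmetric choice $V_1'=\gen\{u_1,\dots,u_i\}$ and $V_2'=\gen\{v_1,\dots,v_{j-i+d}\}$ (of dimensions $i$ and $j-i+d$) yields $\dim(V_1'\cap V_2')\geq j$ and the reversed pointwise bounds $\langle Ax,x\rangle\geq \lambda_i(A)$, $\langle Bx,x\rangle\geq \lambda_{j-i+d}(B)$ on unit $x\in V_1'\cap V_2'$; taking a $j$-dimensional $W'\inc V_1'\cap V_2'$ and using the max-min form of Courant-Fischer delivers the lower bound.

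Finally, for the equality statements, in case \eqref{eqW3} the chain of inequalities above collapses to equalities, so by compactness of the unit sphere in $V_1\cap V_2$ there exists a unit $x\in V_1\cap V_2$ with $\langle(A+B)x,x\rangle=\lambda_i(A)+\lambda_{j-i+1}(B)$; combined with the two upper bounds on $\langle Ax,x\rangle$ and $\langle Bx,x\rangle$ this forces $\langle Ax,x\rangle=\lambda_i(A)$ and $\langle Bx,x\rangle=\lambda_{j-i+1}(B)$ separately. The only non-routine point is then to observe that for a unit $x=\sum_{k=i}^d c_k\, u_k\in V_1$ the identity $\sum_{k\geq i}|c_k|^2\lambda_k(A)=\lambda_i(A)$, together with $\sum_{k\geq i}|c_k|^2=1$ and $\lambda_k(A)\leq \lambda_i(A)$ for $k\geq i$, forces $c_k=0$ whenever $\lambda_k(A)<\lambda_i(A)$; hence $x\in \ker\bigl(A-\lambda_i(A)I\bigr)$ and $Ax=\lambda_i(A)x$. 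The same argument applied to $B$ on $V_2$ gives $Bx=\lambda_{j-i+1}(B)x$, and adding yields $(A+B)x=\lambda_j(A+B)x$, as required. The equality case of \eqref{eqW2} is handled in exactly the same way, now working with the top-eigenvalue subspaces $V_1'$ and $V_2'$; no serious obstacle is anticipated, since the whole argument reduces to the elementary observation that saturating an ``expectation $\leq$ maximum'' bound forces the vector into the corresponding extremal eigenspace.
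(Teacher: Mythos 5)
Your proof is correct, and it is essentially the same dimension-counting argument as the paper's. The only structural difference is minor: the paper intersects \emph{three} eigenspaces — adding $\gen\{w_1,\dots,w_j\}$ (the top-$j$ eigenspace of $A+B$) to your two tail spaces $V_1,V_2$ — so that any unit $x$ in the triple intersection directly satisfies the two-sided sandwich $\lambda_j(A+B)\le\langle(A+B)x,x\rangle\le\lambda_i(A)+\lambda_{j-i+1}(B)$, whereas you intersect only $V_1\cap V_2$ and recover the lower bound via Courant--Fischer plus a compactness maximizer. Both routes arrive at the same saturation argument for the equality case (a unit vector in a tail eigenspace realizing the extremal Rayleigh quotient must lie in the top eigenspace), which you spell out more explicitly than the paper does; no gap.
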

\begin{proof}
We begin by proving \eqref{eqW1}. Let $u_j \coma v_j$ and $w_j$ denote the 
eigenvectors of $A,\,B$ and $A+B$ respectively, corresponding to their 
eigenvalues arranged in decreasing order. Let $i\leq j$ and consider the three 
subspaces spanned by the sets $\{w_1,\ldots,w_j\}$, $\{u_i,\ldots,u_n\}$ and $\{v_{j-i+1},\ldots,v_n\}$. Since the dimensions of these subspaces are $j$, 
$n-i+1$ and $n-j+i$ respectively, we see that they have a non trivial intersection. 
If $x$ is a unit vector in the intersection of these subspaces then 
$$
\lambda_j(A+B)\leq \langle \,(A+B)\, x\coma x\rangle= \langle A \,x\coma x\rangle
+ \langle B\,x\coma x\rangle\leq \lambda_i(A)+\lambda_{j-i+1}(B)\ .
$$ 
If we further assume that equality \eqref{eqW3} holds for these indexes then we deduce that 
$$ 
\langle \,(A+B)x\coma x\rangle= \lambda_j(A+B)  \ \ \coma \ \ 
\langle A \,x\coma x\rangle=\lambda_i(A) 
\py \langle B\,x\coma x\rangle=\lambda_{j-i+1}(B) \ . 
$$
Because $x$ lies in the intersection of the previous subspaces, these last 
facts imply that $ (A+B)\,x= \lambda_j(A+B)\,x$, $ A \,x=\lambda_i(A) \,x$ and 
$\langle B\,x,x\rangle=\lambda_{j-i+1}(B)\,x$. 
The inequality \eqref{eqW2} and the equality \eqref{eqW3} 
for the case $i\ge j$ follow similarly.
\end{proof}

\begin{cor}[Weyl's monotonicity principle]\label{lemWmp}
Let $A\in\matsad$ and $B\in\matpos$. Then 
\beq\label{eqWM1}
\lambda_j(A+B)\geq \lambda_j(A)\ \peso{for every} j\in \IN{d}\  . 
\eeq 
If there exists $J\inc\IN{d}$ such that $\lambda_j(A+B)= \lambda_j(A)$ for every 
$j\in J$, then there exists an orthonormal system 
$\{x_j\}_{j\in J}$ such that $A\,x_j=\lambda_j(A)\,x_j$ and $B\,x_j=0$
for every $j\in J$.
\end{cor}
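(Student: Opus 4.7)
The plan is to derive the inequality directly from Weyl's inequality \eqref{eqW2}, and to handle the equality case by induction on $|J|$.

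For the inequality \eqref{eqWM1}, I apply \eqref{eqW2} of Lemma \ref{lemWineq} with $i = j$: since $B \in \matpos$ forces $\lambda_d(B) \geq 0$, one obtains
$$\lambda_j(A+B) \geq \lambda_j(A) + \lambda_d(B) \geq \lambda_j(A).$$

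For the equality case I would proceed by induction on $|J|$. In the base case $J = \{j\}$, the identity $\lambda_j(A+B) = \lambda_j(A)$ combined with the chain above forces $\lambda_d(B) = 0$; the equality statement of Lemma \ref{lemWineq} (applied to \eqref{eqW2} with $i = j$) then produces a unit vector $x_j$ with $A x_j = \lambda_j(A) x_j$, $(A+B) x_j = \lambda_j(A+B) x_j$, and $B x_j = \lambda_d(B) x_j = 0$. For the inductive step with $|J| \geq 2$, let $j^* = \max J$ and set $\mu = \lambda_{j^*}(A) = \lambda_{j^*}(A+B)$. Extract $x_{j^*}$ via the base case. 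Since $x_{j^*}$ is a common eigenvector of $A$, $B$ and $A+B$, the orthogonal complement $x_{j^*}^\perp$ is invariant under each of them, so $A' = A|_{x_{j^*}^\perp}$ and $B' = B|_{x_{j^*}^\perp}$ are self-adjoint on a $(d-1)$-dimensional space with $B' \geq 0$. I then apply the inductive hypothesis to the pair $(A', B')$ and the set $J' = J \setminus \{j^*\}$ (viewed as a subset of $\{1, \dots, d-1\}$), producing an orthonormal family $\{x_j\}_{j \in J'}$ in $x_{j^*}^\perp$ with $A x_j = \lambda_j(A) x_j$ and $B x_j = 0$; adjoining $x_{j^*}$ yields the desired orthonormal system.

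The main technical obstacle is verifying that the equality hypothesis transfers after restriction, i.e.\ that $\lambda_j(A') = \lambda_j(A' + B')$ for every $j \in J'$. Passing from $A$ to $A'$ removes exactly one occurrence of $\mu$ from the decreasing list of eigenvalues of $A$, and likewise for $A+B$. Because $j^* = \max J$, every $j \in J'$ satisfies $j < j^*$, so only two cases arise: if $\lambda_j(A) > \mu$, positions above the $\mu$-blocks are undisturbed and $\lambda_j(A') = \lambda_j(A) = \lambda_j(A+B) = \lambda_j(A'+B')$; if $\lambda_j(A) = \mu$, then monotonicity of the decreasing ordering forces $\lambda_i(A) = \mu$ for every $j \leq i \leq j^*$, so $j$ still lies inside the (shortened) $\mu$-block of both $A'$ and $A'+B'$, giving $\lambda_j(A') = \mu = \lambda_j(A'+B')$. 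In either case the hypothesis is preserved and the induction closes.
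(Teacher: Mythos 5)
Your proof is correct and follows the same overall strategy as the paper: derive \eqref{eqWM1} from \eqref{eqW2} with $i=j$, then handle the equality case by induction on $|J|$, peeling off one common eigenvector at a time via the equality clause of Lemma \ref{lemWineq} and passing to the orthogonal complement. The one genuine refinement you make is choosing $j^* = \max J$ rather than an arbitrary element of $J$: because every surviving index $j\in J'$ satisfies $j<j^*$, and the removed eigenvector carries the eigenvalue $\mu=\lambda_{j^*}(A)=\lambda_{j^*}(A+B)$ for both $A$ and $A+B$, the indices in $J'$ are unaffected by the deletion, so no re-indexing is needed. The paper's version picks an arbitrary $j_0\in J$ and compensates by shifting the indices $j>j_0$ down by one (the set $I$ in their proof), and it states the transfer of the hypothesis $\lambda_j(A|_W+B|_W)=\lambda_j(A|_W)$ without spelling out the argument; your two-case analysis (whether $\lambda_j(A)>\mu$ or $\lambda_j(A)=\mu$, using that $j\le\max L-1$ for the relevant $\mu$-blocks of both $A$ and $A+B$) fills in exactly that gap. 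So: same method, with your choice of $j^*$ streamlining the bookkeeping and your extra paragraph making explicit a step the paper asserts.
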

\begin{proof} 
Inequality  \eqref{eqWM1} follows easily from 
Lemma \ref{lemWineq} (with $i=j$). 
The second part follows by induction on the set $|J|$: \  
Fix $j_0\in J$. By Eq. \eqref{eqW2} with $i= j = j_0\,$, 
there exists a unit vector $x_{j_0}$ such that $A\,x_{j_0}
=\lambda_{j_0}(A) \,x_{j_0}$ and $B\,x_{j_0}=\lambda_d(B)\,x_{j_0}=0$.

\pausa
This proves the case $|J|=1$. If $|J|>1$, 
consider the space $W=\{x_{j_0}\}^\perp\inc\C^d$ which 
reduces $A$,   $B$  and  $A+B$. Let $I=\{j:\ j\in J\, , \  j<j_0\}
\cup\{j-1:\ j\in J\, , \ j>j_0\}$. The operators 
$A|_W\in L(W)^{\rm sa}$ and $B|_W\in  L(W)^+$ satisfy  that 
$\la_j(A|_W+B|_W)= \la_j(A|_W)$ for every $j\in I$,  
with  $|I| = |J|-1$. By the inductive hypothesis we 
can find an orthonormal system $\{x_j\}_{j\in I} \inc W$ which 
satisfy the desidered properties. 
\end{proof}

\pausa

\begin{pro}\label{proLeq} 
Let $A,\,B\in \matsad$. Then the equality  
$$
\big(\, \lambda(A+B)-\lambda(A)\, \big)^\downarrow=\lambda(B)
\implies \ \ \mbox{$A$ and $B$ commute}\ .
$$ 
\end{pro}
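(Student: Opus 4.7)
The plan is to prove $AB = BA$ by induction on $d = \dim \cH$; the case $d = 1$ is trivial. The strategy is to extract a single unit vector $x$ that is simultaneously an eigenvector of $A$, $B$ and $A+B$, then restrict to the orthogonal complement $W = \{x\}^\perp$ (which automatically reduces both operators) and verify that the hypothesis persists on $W$, so that the inductive hypothesis gives commutation there.

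First I would unpack the hypothesis: $(\lambda(A+B)-\lambda(A))^\downarrow = \lambda(B)$ says the multisets $\{\lambda_j(A+B)-\lambda_j(A)\}_{j\in\IN{d}}$ and $\{\lambda_i(B)\}_{i\in\IN{d}}$ coincide. In particular the maximum of the differences equals $\lambda_1(B)$, so there exists some index $j^*\in\IN{d}$ with $\lambda_{j^*}(A+B) - \lambda_{j^*}(A) = \lambda_1(B)$. This is precisely the equality case of Weyl's inequality \eqref{eqW1} with $i = j = j^*$, and the equality statement of Lemma \ref{lemWineq} supplies a unit vector $x\in\cH$ satisfying
$$A\,x = \lambda_{j^*}(A)\,x,\qquad B\,x = \lambda_1(B)\,x,\qquad (A+B)\,x=\lambda_{j^*}(A+B)\,x$$
simultaneously.

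Next, pass to $W = \{x\}^\perp$. Since $x$ is an eigenvector of both $A$ and $B$, the subspace $W$ reduces each operator and $(A+B)|_W = A|_W + B|_W$. Because $\lambda(A)$ and $\lambda(A+B)$ are already in decreasing order, the decreasing eigenvalue lists $\lambda(A|_W)$ and $\lambda((A+B)|_W)$ are obtained from $\lambda(A)$ and $\lambda(A+B)$ respectively by deleting the entry at the same position $j^*$, while $\lambda(B|_W)$ is $\lambda(B)$ with one occurrence of $\lambda_1(B)$ removed. Hence the multiset of differences $\{\lambda_p((A+B)|_W) - \lambda_p(A|_W)\}_{p\in\IN{d-1}}$ equals $\{\lambda_j(A+B) - \lambda_j(A)\}_{j\neq j^*}$, which by the original multiset identity coincides with $\{\lambda_i(B)\}_i\setminus\{\lambda_1(B)\} = \{\lambda_i(B|_W)\}_i$. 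Therefore $(\lambda(A|_W+B|_W)-\lambda(A|_W))^\downarrow = \lambda(B|_W)$, and the inductive hypothesis gives $A|_W B|_W = B|_W A|_W$. Combined with the joint action on $\gen\{x\}$, this yields $AB = BA$ on $\cH$.

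The main obstacle is the very first step: producing a common eigenvector. The sharp form of Weyl's inequality (Lemma \ref{lemWineq}) is the right tool, and the observation that the hypothesis forces saturation of Weyl's estimate at some index $j^*$ (with $B$-rank $1$) makes it applicable. The remaining work is essentially bookkeeping, checking that removing entries at matching positions $j^*$ from the sorted lists $\lambda(A)$ and $\lambda(A+B)$ is compatible with the compression to $W$, and that this removal descends on the $B$-side to the deletion of one copy of $\lambda_1(B)$.
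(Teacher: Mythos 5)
Your proof is correct, and it takes a genuinely different route from the paper's. The paper proves Proposition~\ref{proLeq} by revisiting the proof of Lindskii's theorem: after reducing to $B$ non-scalar, it chooses index sets $J_k$ along which the partial sums of the differences $\lambda_j(A+B)-\lambda_j(A)$ equal the Ky~Fan sums $\sum_{i\le k}\lambda_i(B)$, shifts $B$ to $B_k=B-\lambda_k(B)I$, passes to the positive part $B_k^+$, and forces equality in Weyl's monotonicity principle; Corollary~\ref{lemWmp} then produces an orthonormal system of joint eigenvectors of $A$ annihilated by $B_k^+$, so that each spectral projection $P_k=P_{\ker B_k^+}$ of $B$ commutes with $A$, and since $B$ is a linear combination of the $P_k$ and $I$, the claim follows. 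Your argument instead inducts on the dimension: you observe that the hypothesis forces saturation of the Weyl inequality~\eqref{eqW1} with $i=j=j^*$ at whichever index achieves $\lambda_{j^*}(A+B)-\lambda_{j^*}(A)=\lambda_1(B)$, extract a single common eigenvector $x$ from the equality clause of Lemma~\ref{lemWineq}, and then check that the hypothesis survives compression to $\{x\}^\perp$; the bookkeeping you describe is sound, since deleting one occurrence of $\lambda_{j^*}(A)$ (resp. $\lambda_{j^*}(A+B)$) from an already-sorted list just removes the entry at slot $j^*$, and the multiset of differences loses exactly the one copy of $\lambda_1(B)$ that $\lambda(B|_W)$ also loses. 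Both approaches are legitimate and both ultimately rest on the sharp (equality) form of Weyl-type estimates; yours has the virtue of being a clean one-vector-at-a-time peeling argument, while the paper's constructs the spectral resolution of $B$ directly and in that sense gives slightly more explicit structural information (namely that each spectral projection of $B$ already commutes with $A$) along the way.
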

\begin{proof}
We can assume that $B$ is not a multiple of the identity. 
By hypothesis, there exists permutation $\sigma\in \mathbb S_d$ such that  $\lambda_j(B)=\lambda_{\sigma(j)}(A+B)-\lambda_{\sigma(j)}(A)$ 
for every $j\in \IN{d}\,$. 
Therefore, there exists an increasing sequence $\{J_k\}_{k=1}^d$ of subsets of $\IN{d}$ such that $|J_k|=k$ and 
\beq\label{ecuacuacuac}
\sum_{j\in J_k} \lambda_{j}(A+B)-\lambda_{j}(A)=\sum_{j=1}^k \lambda_j(B)
\peso{for every} k\in \IN{d}\ .
\eeq
Let $k\in \IN{d}$ be such that $\lambda_{k-1}(B)>\lambda_k(B)$ 
(recall that $B\neq \alpha\, I$ for $\alpha\in \R$). 
Let us denote by $B_k=B-\lambda_k(B)\,I$ and notice 
Eq. \eqref{ecuacuacuac} also holds if we replace $B$ by $B_k\,$.

\pausa
By construction $\lambda_k(B_k)=0$ and the orthogonal projection 
onto the kernel of the  positive part $B_k^+\in \matpos$ coincides 
with the spectral projection of the $B$ associated to the interval
$(-\infty, \lambda_k(B)]$. Moreover, $\dim\ker B_k^+=d-k+1$.

\pausa
Since $B_k^+\in \matpos$ and $B_k\leq B_k^+$ then Weyl's 
monotonicity principle implies that 
$$ 
\lambda_j(A+B_k)\leq \lambda_j(A+B_k^+) \, , \ 
j\in \IN{d}\implies
 \sum_{j\in J_{k-1}} \lambda_j(A+B_k) \leq \sum_{j\in J_{k-1}} \lambda_j(A+B_k^+)\ .  $$
Therefore 
\begin{eqnarray*}
\sum_{j\in J_{k-1}} \lambda_j(A+B_k) - \lambda_j(A)&\leq &\sum_{j\in J_{k-1}} \lambda_j(A+B_k^+) - \lambda_j(A) \\
&\leq & \sum_{j\in \IN{d}} \lambda_j(A+B_k^+) - \lambda_j(A) \\ &=& \tr \,(A+B_k^+)-\tr \,A =\sum_{j=1}^{k-1}\lambda_j(B_k)
\end{eqnarray*} since $\lambda_j(A+B_k^+) \geq \lambda_j(A)$ for $j\in \IN{d}$ - again by Weyl's monotonicity principle - and since, by hypothesis, $\lambda_k(B_k)=0$. 
The inequalities above are the key part of the proof of 
Lindskii's Theorem \ref{LindsTeo} ($\lambda(A+B)-\lambda(A)\prec \lambda(B)$\,).  But  here they 
actually equalities, by Eq. \eqref{ecuacuacuac}. 

\pausa
Let $J_{k-1}^c=\IN{d}\setminus J_{k-1}$. Then, 
from the above equalities  we get that $\lambda_j(A+B_k^+)=\lambda_j(A)$
for every $j\in J_{k-1}^c\,$. By Corollary \ref{lemWmp} there exists 
an ONS $\{x_j\}_{j\in J_{k-1}^c}$ such that $A\,x_j=\lambda_j(A)\,x_j$ 
and $B_k^+\,x_j=0$ for every $j\in J_{k-1}^c\,$. All these facts together 
imply that 
$$
P_k\igdef \sum_{j\in J_{k-1}^c} x_j\otimes x_j 
= P_{\ker B_k^+} \py P_k \, A = A\, P_k \ .
$$ 
Recall that $P_k$ is also the spectral projection of $B$ associated to the interval $(-\infty, \lambda_k(B)]$, for any $k\in \IN{d}$ such that $\lambda_{k-1}(B)>\lambda_k(B)$. Since the spectral projection of $B$ associated with $(-\infty, \lambda_1(B)]$ equals the identity operator, 
and $B$ is a linear combination of the projections $P_k$ and $I$, 
we conclude that $A$ and $B$ commute.
\end{proof}

\pausa
Now we are ready to prove that if $S_1\in \matpos $ is 
as in Eq. \eqref{los S1} then $S_0\,S_1= S_1\,S_0\,$.

\begin{teo}\label{teoleq}
Let $S_0 \coma S_1\in \matsad$ be such that $\la(S_0+S_1)=\big(\,\la(S_0)+\la\ua(S_1)\,\big)\da$. 
Then $S_0$ and $S_1$ commute.
\end{teo}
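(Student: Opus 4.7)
The plan is to proceed by induction on $d$. The base case $d = 1$ is trivial since any two scalars commute. For the inductive step, the strategy is to extract a single common eigenvector of $S_0$ and $S_1$ via the equality case of Weyl's inequality, restrict to its orthogonal complement, and verify that the hypothesis is inherited in dimension $d-1$.

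First, note that the hypothesis $\la(S_0+S_1) = (\la(S_0) + \la^\uparrow(S_1))^\downarrow$ is an equality of vectors, hence of multisets:
\[
\{\la_i(S_0+S_1)\}_{i=1}^d = \{\la_i(S_0) + \la_{d-i+1}(S_1)\}_{i=1}^d \ .
\]
In particular, there is some index $i_0 \in \IN{d}$ with $\la_1(S_0+S_1) = \la_{i_0}(S_0) + \la_{d-i_0+1}(S_1)$. This is exactly the equality case of Weyl's inequality \eqref{eqW2} at $j=1$ and $i = i_0 \ge j$. By the equality clause in Lemma \ref{lemWineq}, there exists a unit vector $x \in \cH$ satisfying $S_0 x = \la_{i_0}(S_0)\,x$, $S_1 x = \la_{d-i_0+1}(S_1)\,x$ and $(S_0+S_1)x = \la_1(S_0+S_1)\,x$. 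Thus $x$ is a common eigenvector of $S_0$ and $S_1$.

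Next, let $V = x^\perp$. Since $S_0, S_1$ are self-adjoint with $x$ as a common eigenvector, $V$ is invariant under both. The spectra of the restrictions $S_0|_V$ and $S_1|_V$ are obtained from $\la(S_0)$ and $\la(S_1)$ by deleting one occurrence each of $\la_{i_0}(S_0)$ and $\la_{d-i_0+1}(S_1)$, while analogously $\la((S_0+S_1)|_V)$ equals $\la(S_0+S_1)$ with $\la_1(S_0+S_1)$ removed. A direct case analysis (splitting indices into $i < i_0$ and $i \ge i_0$) shows that the opposite-order pairing for the restricted operators, $\{\la_i(S_0|_V) + \la_{(d-1)-i+1}(S_1|_V)\}_{i=1}^{d-1}$, reproduces exactly the multiset $\{\la_i(S_0) + \la_{d-i+1}(S_1) : i \ne i_0\}$. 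Combined with the original hypothesis, this multiset equals $\la((S_0+S_1)|_V)$; so $S_0|_V$ and $S_1|_V$ satisfy the same hypothesis in dimension $d-1$, and the induction hypothesis yields that they commute.

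Since $x$ is a common eigenvector spanning $V^\perp$, we conclude that $S_0$ and $S_1$ commute on $\cH = V \oplus \gen\{x\}$. The main technical point is the bookkeeping step in the previous paragraph: one must check explicitly that removing $\la_{i_0}(S_0)$ from $\la(S_0)$ and $\la_{d-i_0+1}(S_1)$ from $\la(S_1)$ shifts the indices of $\la(S_0|_V)$ and $\la^\uparrow(S_1|_V)$ in a coordinated way so that the opposite-order pairing inside $V$ reproduces the original pairing with the $i_0$-th entry removed. This is elementary but must be handled with some care because $i_0$ may sit in the interior of $\IN{d}$.
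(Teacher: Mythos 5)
Your proof is correct, and it takes a genuinely different route from the paper's. The paper first proves Proposition~\ref{proLeq} (that $(\lambda(A+B)-\lambda(A))^\downarrow=\lambda(B)$ forces $A$ and $B$ to commute) by replaying the standard proof of Lindskii's inequality and noting that equality forces, at each threshold $k$ with $\lambda_{k-1}(B)>\lambda_k(B)$, that $\lambda_j(A+B_k^+)=\lambda_j(A)$ for all $j$ outside the running index set; Weyl's monotonicity principle (Corollary~\ref{lemWmp}) then produces the kernel projection $P_k$ of $B_k^+$ as a polynomial in $A$-eigenvectors, so that every spectral projection of $B$ commutes with $A$. Theorem~\ref{teoleq} then follows from Proposition~\ref{proLeq} by the substitution $A=-S_1$, $B=S_0+S_1$. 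You instead argue by induction on the dimension $d$: you use the equality case of Weyl's \emph{sum} inequality \eqref{eqW2} at $j=1$ (not the monotonicity principle) to peel off a single unit vector $x$ that is simultaneously an eigenvector of $S_0$, $S_1$ and $S_0+S_1$ for the "matched" eigenvalues $\lambda_{i_0}(S_0)$, $\lambda_{d-i_0+1}(S_1)$ and $\lambda_1(S_0+S_1)$; then you restrict to $x^\perp$ and check that the hypothesis is inherited, which your multiset bookkeeping verifies correctly (removing the $i_0$-th entries on both sides preserves the opposite-order pairing). The paper's proof is global and gets all the spectral projections of $B$ at once, buying a statement in the symmetric form of Proposition~\ref{proLeq} that does not mention $\lambda^\uparrow$; your proof is more local and incremental, building a common orthonormal eigenbasis one vector at a time, and it only invokes the single equality instance of Lemma~\ref{lemWineq} that it needs rather than the full Lindskii apparatus. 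Both ultimately rest on Weyl's rigidity: equality forces a common eigenvector.
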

\begin{proof}
Take $B=S_0+S_1$ and $A=-S_1\,$. Therefore 
 $-\la(A)=\la\ua(-A)= \la\ua(S_1)$, so that  
$\la(A+B) -\la (A) = \la(S_0)+\la\ua(S_1)$. 
Hence $A$ and $B$ satisfy  the assumptions in Proposition \ref{proLeq}  and they must commute. In this case $S_0$ and $S_1$ also commute.
\end{proof}

\subsection{Characterization of optimal matching matrices}

Let $S_0\in\matpos$ and let $S_1\in\matpos$ be an optimal matching matrix for $S_0$. Then, Theorem \ref{teoleq} implies that $S_0\,S_1= S_1\,S_0\,$ and hence there exists a common
ONB of eigenvectors for $S_0$ and $S_1$. In order to complete describe $S_0$ and $S_1$ we first consider some technical results.

\pausa
We begin by fixing some notations. 
Let $\lambda \in \R_{> 0}^d\,$. For every $j \in \IN{d}$ we define
the set 
$$
L(\la \coma j)=\{i\in \IN{d}:\lambda_i=\lambda_j\} \ .
$$
If we assume that $\lambda=\lambda^\downarrow$ or $\lambda=\lambda^\uparrow$ then the sets $L(j)$ are formed by consecutive integers. In the firs case we have that $\la_i<\la _j \implies 
k>l $ for every $k \in  L(\la \coma i) $ and $l \in L(\la \coma j)$. 

\pausa
Given a permutation $\sigma\in \mathbb S_d$ and $\lambda \in \R_{> 0}^d\,$ 
we denote by $\la_\sigma = (\la_{\sigma(1)}
\coma \dots \coma \la_{\sigma(d)})$. Observe that 
\beq\label{sigma lo fija}
\la = \la_\sigma \iff \la = \la_{\sigma\inv} \iff
\sigma\big(L(\la \coma j)\,\big) = L(\la \coma j)
\peso{for every} j\in \IN{d}\ .
\eeq
The following inequality is well known (see for example \cite[II.5.15]{Bat}):

\begin{pro}[Rearrangement inequality for products of sums]\label{propwk1}
\rm 
Let $\la \coma\mu\in \R_{> 0}^d$ be such that $\la=\la\da$ 
and $\mu=\mu\ua$. Then  
$\prod_{i=1}^d (\lambda_i+\mu_i)\geq \prod_{i=1}^d (\lambda_i+\mu_{\sigma(i)})
$ for every permutation  $\sigma\in \mathbb S_d\,$. 
\end{pro}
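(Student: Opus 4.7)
The plan is to proceed by a bubble-sort argument, reducing to the case of adjacent transpositions. First I would record the elementary two-variable identity
\beq
(a+c)(b+d)-(a+d)(b+c)=(a-b)(d-c)\ ,
\eeq
valid for all real $a\coma b\coma c\coma d$, which immediately gives $(a+d)(b+c)\ge (a+c)(b+d)$ whenever $a\ge b$ and $c\ge d$.

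Next, given $\sigma\in \mathbb S_d$, I would consider the rearranged vector $\eta=(\mu_{\sigma(1)}\coma \dots \coma \mu_{\sigma(d)})$. If $\eta\neq \eta\ua$, there exists $i\in \IN{d-1}$ with $\eta_i>\eta_{i+1}$. Applying the identity with $a=\la_i\coma b=\la_{i+1}\coma c=\eta_i\coma d=\eta_{i+1}$, the hypothesis $\la=\la\da$ and the choice of $i$ give $a\ge b$ and $c>d$, so that $(\la_i+\eta_{i+1})(\la_{i+1}+\eta_i)\ge (\la_i+\eta_i)(\la_{i+1}+\eta_{i+1})$. Multiplying by the common factor $\prod_{j\neq i\coma i+1}(\la_j+\eta_j)$ shows that exchanging the $i$-th and $(i+1)$-st entries of $\eta$ weakly increases the full product $\prod_{j=1}^d(\la_j+\eta_j)$.

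Since each such swap strictly decreases the inversion count of $\eta$, after finitely many iterations one reaches a vector $\eta^*$ that is weakly increasing. Because $\eta^*$ is a rearrangement of $\mu$ and both vectors are weakly increasing, one has $\eta^*=\mu$ entry-wise, so the final product equals $\prod_{i=1}^d(\la_i+\mu_i)$. Chaining the inequalities along the sequence of swaps yields $\prod_{i=1}^d(\la_i+\mu_{\sigma(i)})\le \prod_{i=1}^d(\la_i+\mu_i)$, which is the claim. There is no substantive obstacle; the only mild subtlety, when $\mu$ has ties, is to compare the vectors $\eta^*$ and $\mu$ rather than the underlying permutations, which is harmless because the product depends only on $\eta$ and not on $\sigma$.
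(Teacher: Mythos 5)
Your proof is correct and self-contained. Note, though, that the paper does not actually prove Proposition \ref{propwk1}: it states it as a known fact and cites Bhatia's \emph{Matrix Analysis} (around II.5.15), so there is no in-paper argument to compare against. Your adjacent-transposition (bubble-sort) argument is the standard elementary proof: the two-variable identity $(a+c)(b+d)-(a+d)(b+c)=(a-b)(d-c)$ is exactly the right local comparison, the inversion count provides a clean termination argument, and your remark about ties is correctly handled by tracking the vector $\eta$ rather than the permutation $\sigma$. One small bonus worth recording, since the paper immediately goes on in Proposition \ref{propwk2 b} to analyze the equality case: your identity shows that each swap \emph{strictly} increases the product precisely when $\lambda_i>\lambda_{i+1}$ and $\eta_i>\eta_{i+1}$, and this strictness is the seed of the rigidity statement proved there (that equality forces $\lambda=\lambda_\sigma$ under the normalization \eqref{la hyp b}). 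That is not needed for the inequality itself, but it makes your proof a natural companion to the paper's equality analysis rather than merely a citation filler.
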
  

\pausa
The following result deals with the case of equality in the last inequality.

\begin{pro}\label{propwk2 b}
Let $\lambda,\,\mu\in \R_{> 0}^d$ be such that $\lambda=\lambda^\downarrow$
and $\mu=\mu^\uparrow$. Let $\sigma\in \mathbb S_d$ be such that 
$$ 
(\lambda+\mu)^\downarrow = (\lambda+\mu_\sigma)^\downarrow \ .
$$ 
Moreover, assume that $\sigma$ also satisfies that: 
\beq\label{la hyp b}
\mbox{\rm if  \ $r\coma s \in \IN{d}$  \ are such that \  $\mu_{\sigma(r)}=\mu_{\sigma(s)}$  \ with  \ 
$\sigma(r)<\sigma(s)$  \ then  \ $r<s$ .}
\eeq 
Then the permutation $\sigma$ satisfies that  $\lambda=\lambda_{\sigma}\,$. 
\end{pro}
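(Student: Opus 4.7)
The plan is to first reduce the hypothesis on sorted vectors to the statement that $\sigma$ is a minimizer of the bilinear form $\tau \mapsto \sum_i \lambda_i \mu_{\tau(i)}$, and then use \eqref{la hyp b} to force $\sigma$ to fix each $\lambda$-level set. From $(\lambda+\mu)\da = (\lambda+\mu_\sigma)\da$, the multisets $\{\lambda_i+\mu_i\}$ and $\{\lambda_i+\mu_{\sigma(i)}\}$ coincide, so $\sum_i (\lambda_i+\mu_i)^2 = \sum_i (\lambda_i+\mu_{\sigma(i)})^2$. Expanding and using $\sum_i\mu_i^2 = \sum_i \mu_{\sigma(i)}^2$, this gives $\sum_i \lambda_i\mu_i = \sum_i \lambda_i \mu_{\sigma(i)}$. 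Since $\lambda=\lambda\da$ and $\mu=\mu\ua$, the classical rearrangement inequality tells us $\sum_i\lambda_i\mu_i$ is the minimum of $\tau\mapsto \sum_i\lambda_i\mu_{\tau(i)}$ over $\tau\in \mathbb{S}_d$, so $\sigma$ attains it.

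A standard adjacent-transposition argument then characterizes the minimizers: $\tau$ minimizes if and only if $\mu_{\tau(i)} \le \mu_{\tau(j)}$ for every pair $i,j$ with $\lambda_i > \lambda_j$. Writing the $\lambda$-level sets as $L_1, \ldots, L_p$ with $L_k = \{r_{k-1}+1,\ldots,r_k\}$ (so $r_0 = 0$ and $r_p = d$), this forces the multiset $\{\mu_{\sigma(i)} : i \in L_k\}$ to equal the $k$-th chunk $\{\mu_{r_{k-1}+1},\ldots,\mu_{r_k}\}$ of $\mu\ua$, but leaves $\sigma(L_k)$ as a set of indices ambiguous when a chunk falls strictly inside a $\mu$-level set. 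I then pin down $\sigma(L_1)$ as follows: set $\beta = \mu_{r_1}$, $M_a = \{j : \mu_j = \beta\}$, and $r' = |\{j : \mu_j < \beta\}|$. Minimality yields $\sigma(L_1) = \{1,\ldots,r'\} \cup A'$ with $A' \subseteq M_a$ of size $r_1-r'$. Setting $A = L_1 \cap \sigma^{-1}(M_a)$ and $B = L_1^c \cap \sigma^{-1}(M_a)$, every element of $A$ is $\le r_1$ while every element of $B$ exceeds $r_1$. Hypothesis \eqref{la hyp b} says precisely that $\sigma$ restricted to $\sigma^{-1}(M_a)$ is order-preserving onto $M_a$, so $\sigma(A)$ must be the initial segment $\{r'+1, \ldots, r_1\}$ of $M_a$, giving $\sigma(L_1) = L_1$.

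Finally, with $\sigma(L_1) = L_1$, the restriction $\sigma'$ of $\sigma$ to $\{r_1+1,\ldots,d\}$ inherits the same hypotheses for the restricted data $\lambda'$, $\mu'$ (the sub-multisets of sums on $L_1$ and on its complement agree separately, because $\lambda$ is constant on $L_1$ and $\sigma$ permutes $L_1$). Induction on $d$ gives $\lambda'_{\sigma'} = \lambda'$, which combined with $\sigma(L_1) = L_1$ and Eq. \eqref{sigma lo fija} yields $\lambda_\sigma = \lambda$. The main obstacle is the second paragraph: the minimality condition alone does not determine $\sigma(L_1)$ precisely when $\mu_{r_1} = \mu_{r_1+1}$, and hypothesis \eqref{la hyp b} is exactly the device that resolves this ambiguity by forbidding the permutations which would swap equal $\mu$-values across different $\lambda$-levels in the wrong direction.
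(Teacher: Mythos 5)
Your proof is correct, but it takes a genuinely different route from the paper's. The paper works with the product functional $F(\tau) = \prod_{i}(\lambda_i + \mu_{\tau(i)})$, invokes the rearrangement inequality for products of sums (Proposition \ref{propwk1}), and argues by contradiction: assuming $\lambda \neq \lambda_{\sigma^{-1}}$, it locates a specific pair of indices $j,k$ with $\lambda_{\sigma^{-1}(j)} < \lambda_{\sigma^{-1}(k)}$ and (using hypothesis \eqref{la hyp b} to rule out a tie) $\mu_j < \mu_k$, for which a single transposition strictly increases $F$, contradicting the optimality $F(\sigma)=F(\mathrm{id})$. You instead reduce, via a sum-of-squares expansion of the multiset equality, to the statement that $\sigma$ minimizes the bilinear form $\tau \mapsto \sum_i \lambda_i \mu_{\tau(i)}$, apply the classical linear rearrangement inequality, characterize the minimizers via the chunk decomposition of $\mu\ua$ over the $\lambda$-level sets, and then argue constructively by induction on the number of levels, with \eqref{la hyp b} forcing $\sigma(L_1)=L_1$. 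Both proofs hinge on the same point — hypothesis \eqref{la hyp b} is what resolves the ambiguity when a $\lambda$-level boundary falls strictly inside a $\mu$-level — but your route uses the more standard (additive) rearrangement inequality and makes the role of \eqref{la hyp b} fully explicit as an order-preservation condition on each $\mu$-level, at the cost of an induction that the paper's shorter transposition argument avoids.
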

\begin{proof} For every $\tau\in \mathbb S_d$ let 
$F(\tau)=\prod_{i=1}^d (\lambda_i+\mu_{\tau(i)})$. 
By the hypothesis and Proposition \ref{propwk1}, 
$$
F(\sigma)=F({\rm id})=\max_{\tau\in \mathbb S_d}F(\tau)\ .
$$
Assume that $\lambda\neq \lambda_{\sigma^{-1}}\,$. 
In this case there exists $j\coma k\in \IN{d}$ such that 
\beq\label{las 4 b}
\mu_j<\mu_k 
\py  \lambda_{\sigma^{-1}(j)}<\lambda_{\sigma^{-1}(k)} \ . 
\eeq
Indeed, let $j_0$ be the smallest index such that $\sigma^{-1}$ 
does not restrict to a permutation on $L(\la \coma j_0)$. 
Then, there exists $j\in L(\la \coma j_0)$ such that 
$\sigma^{-1}(j)\notin L(\la \coma j_0)$. 
As $\sigma^{-1}(L(\la \coma j_0)\setminus\{j\})\neq L(\la \coma j_0)$ 
there also 
exists $k\notin L(\la \coma j_0)$ such that 
$\sigma^{-1}(k)\in L(\la \coma j_0)$.
They have the required properties: 

\bit
\item 
First note that 
$\lambda_{\sigma^{-1}(j)}<\la_{j_0} = \lambda_{\sigma^{-1}(k)} $ 
(and then also $\sigma^{-1}(j)>\sigma^{-1}(k)\,$) because 
$\sigma^{-1}(j) $ can not be in $L(\la \coma j_0)$ nor in 
$L(\la \coma r)$ for any $r<j_0\,$ (where $\sigma\inv$ acts 
as a permutation).

\item A similar argument shows that $j<k$. We have used in both cases that the sets 
$L(\la \coma j)$ are formed by consecutive integers,  since the vector $\lambda$ is 
decreasingly ordered.  
\item 
Observe that $j<k \implies \mu_j\leq \mu_k\,$.  So it suffices to show 
that  $\mu_j\neq\mu_k\,$. Let us denote by $r=\sigma^{-1}(j) $ and 
$ s=\sigma^{-1}(k)$. The previous items show that $r>s$ and $\sigma(r)<\sigma(s)$. 
Hence the equality  $\mu_j=\mu_{\sigma(r)}=\mu_{\sigma(s)}=\mu_k\,$ 
is forbidden  by our hypothesis \eqref{la hyp b}. 
\eit
So Eq. \eqref{las 4 b} is proved. Consider now the 
permutation $\tau=\sigma^{-1}\circ (j\coma k)$, 
where $(j\coma k)$ stands for the transposition of the indexes $j$ and $k$. 
Straightforward computations show that 
$$
(\lambda_{\sigma^{-1}(j)}+\mu_j)\, (\lambda_{\sigma^{-1}(k)}+\mu_k) - (\lambda_{\sigma^{-1}(j)}+\mu_k) \, (\lambda_{\sigma^{-1}(k)}+\mu_j) = (\lambda_{\sigma^{-1}(j)}-\lambda_{\sigma^{-1}(k)}) \, (\mu_k-\mu_j)
\stackrel{\eqref{las 4 b}}{<} 0 \ .
$$
From the previous inequality we conclude  that $F({\rm id})=F(\sigma)<F(\tau)\leq F({\rm id}) . $ This contradiction arises from the assumption $\lambda\neq \lambda_{\sigma^{-1}}$. Therefore  $\lambda= \lambda_{\sigma^{-1}}\stackrel{\eqref{sigma lo fija}}{=} \la_\sigma$ as desired.
\end{proof}

\begin{rem}\label{sigma prima}
Let $\la \coma \mu\in \R_{> 0}^d$ be such that $\la=\la\da$ and $\mu=\mu\ua$. 
Let $\tau\in \mathbb S_d$ be such that $(\lambda+\mu)^\downarrow 
= (\lambda+\mu_\tau)^\downarrow$. 
Then, by considering convenient permutations of the sets 
$L(\mu \coma j)$ we can always replace $\tau $ by $\sigma$
in such a way that $\mu_\sigma= \mu_{\tau}$ and such that 
this $\sigma$ satisfies the condition \eqref{la hyp b} of 
Proposition \ref{propwk2 b}. 
Hence, in this case $(\lambda+\mu)^\downarrow = (\lambda+\mu_{\sigma})^\downarrow$ 
and the previous result applies.
\EOE
\end{rem}

\begin{teo}[Equality in Lindskii's inequality]\label{LA ONB}
Let $S_0\in\matpos$ and let $S_1\in\matpos$ be an optimal matching matrix for $S_0\,$. 
Let $\la=\la(S_0)$ and $\mu=\la\ua(S_1)$.  
Then there exists 
$\{v_i: i\in \IN{d}\}$ a ONB for $S_0$ and $\la$ such that 
\beq\label{la BON}
S_1 = \sum_{i\in \IN{d}} \, \mu_i \cdot v_i \otimes v_i 
\py 
S_0 + S_1 = \sum_{i\in \IN{d}} \, (\la_i + \mu_i) \, v_i \otimes v_i \ .
\eeq
\end{teo}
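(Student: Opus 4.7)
My plan is to combine the commutation result of Theorem \ref{teoleq} with the rearrangement analysis of Proposition \ref{propwk2 b}. The first step is routine: Theorem \ref{teoleq} gives $S_0\,S_1 = S_1\, S_0$, so there exists an ONB $\{w_i\}_{i\in\IN{d}}$ that simultaneously diagonalizes $S_0$ and $S_1$. After relabeling I may assume $S_0\, w_i = \la_i\, w_i$ with $\la = \la\da(S_0)$, and I write $S_1\, w_i = c_i\, w_i$. The multiset $\{c_i\}_{i\in \IN{d}}$ coincides with the multiset of entries of $\mu = \la\ua(S_1)$, so there is a permutation $\tau \in \mathbb S_d$ with $c_i = \mu_{\tau(i)}$ for every $i\in\IN{d}$.

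Since $S_0 + S_1$ is diagonal in this basis with entries $\la_i + \mu_{\tau(i)}$, the optimal matching hypothesis $\la(S_0 + S_1) = (\la + \mu)\da$ translates into the permutation identity
\[
(\la + \mu_\tau)\da = (\la + \mu)\da \ .
\]
To apply Proposition \ref{propwk2 b} I first invoke the adjustment described in Remark \ref{sigma prima}: by permuting $\tau$ within the level sets $L(\mu \coma j)$ I can ensure that $\tau$ satisfies hypothesis \eqref{la hyp b}. This adjustment does not alter the vector $(\mu_{\tau(i)})_{i\in\IN{d}}$ because $\mu$ is constant on each such level set, so the displayed identity is preserved; at the level of the basis it corresponds merely to a relabeling among the $w_i$'s that share the same $S_1$-eigenvalue. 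Proposition \ref{propwk2 b} then yields $\la = \la_\tau$; equivalently $\la_{\tau\inv(i)} = \la_i$ for every $i\in \IN{d}$.

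The last step is to set $v_i \igdef w_{\tau\inv(i)}$. Then $\{v_i\}_{i\in \IN{d}}$ is an ONB of $\C^d$; the identity $\la_{\tau\inv(i)} = \la_i$ gives $S_0\, v_i = \la_{\tau\inv(i)}\, w_{\tau\inv(i)} = \la_i\, v_i$, so $\{v_i\}$ is an ONB of eigenvectors for $S_0\coma \la$. Moreover $S_1\, v_i = c_{\tau\inv(i)}\, w_{\tau\inv(i)} = \mu_{\tau(\tau\inv(i))}\, v_i = \mu_i\, v_i$. These two identities are precisely Eq.\,\eqref{la BON}.

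The main obstacle is really hidden in the manipulation of the permutation $\tau$: one must verify both that $\tau$ can be adjusted to satisfy hypothesis \eqref{la hyp b} without changing the product $\mu_{\tau(i)}$ on each index, and that the positivity hypothesis $\la \coma \mu \in \R^d_{>0}$ stated in Proposition \ref{propwk2 b} is not an obstruction. For the latter, if $S_0$ or $S_1$ has zero eigenvalues one can either apply the result to $S_0 + \eps I$ and $S_1 + \eps I$ (an equality of the form \eqref{los S1} is stable under adding a scalar multiple of $I$ to either matrix) and let $\eps\to 0^+$, or inspect the proof of \ref{propwk2 b} and check that it extends to $\R^d_{\ge 0}$ provided $\la_{\sigma\inv(j)} < \la_{\sigma\inv(k)}$ holds strictly, which is exactly the situation used there.
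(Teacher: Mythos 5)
Your proof follows the paper's argument essentially verbatim: Theorem \ref{teoleq} yields commutation and a simultaneous ONB $\{w_i\}$, Remark \ref{sigma prima} together with Proposition \ref{propwk2 b} shows the diagonalizing permutation $\tau$ can be adjusted to one fixing $\la$, and reindexing by $\tau\inv$ produces the desired basis $\{v_i\}$ satisfying Eq.\,\eqref{la BON}. The only cosmetic difference is in the handling of zero eigenvalues: the paper simply applies the invertible case to $\tilde S_0 = S_0 + I$ and $\tilde S_1 = S_1 + I$ and observes that the resulting ONB already works for $(S_0\coma S_1)$ because adding $I$ leaves eigenspaces unchanged, so the $\eps\to 0^+$ limiting argument you sketch (with its attendant compactness concerns about convergence of ONBs) is unnecessary.
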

\proof
Let us assume further that $S_0 \coma \,S_1$ are invertible matrices so that 
$\la \coma \mu\in \R^d_{>0}\,$. 
By Theorem \ref{teoleq} we see that $S_0$ and $S_1$ commute. Then, there exists 
$\cB= \{w_i: i\in \IN{d}\}$ an ONB for $S_0$ and $\la$ such that $S_1 \, w_i 
= \mu_{\tau(i)}\, w_i$ for every $i\in \IN{d}\,$, and for some 
permutation  $\tau \in \mathbb S_d\,$. Therefore 
$$
\big(\,\la +\mu\,\big)\da 
\stackrel{\eqref{los S1}}{=} \la (S_0+S_1) = \big(\,\la +\mu_\tau \,\big)\da  \ .
$$

\pausa
By Remark \ref{sigma prima} we can replace $\tau$ by $\sigma \in \mathbb S_d$ in such a way that 
$\mu_\tau = \mu_\sigma\,$,  $(\lambda+\mu)^\downarrow = (\lambda+\mu_\sigma)^\downarrow$ and $\sigma $ satisfies the hypothesis \eqref{la hyp b}. Hence, by  
Proposition \ref{propwk2 b}, we deduce that 
$\la_{\sigma\inv} = \la$. 
Therefore one easily 
checks that the ONB formed by the vectors $v_i = w_{\sigma\inv(i)}$ for 
$i \in \IN{d}$ (i.e. the rearrangement $\cB_{\sigma\inv}$ of $\cB$) 
is still a ONB for $S_0$ and $\la$, but it now 
satisfies Eq. \eqref{la BON}. 

\pausa
In case $S_0$ or $S_1$ are not invertible, we can argue as above with the matrices 
$\tilde S_0=S_0+I$ and $\tilde S_1=S_1+I$. These matrices are invertible and such 
that $\tilde S_1$ is an optimal matching for $\tilde S_0\,$. 
Further, $\la(\tilde S_0)=\la(S_0)+ \uno$ and $\la(\tilde S_1)=\la(S_1)+\uno$. Hence, 
if $\{v_i:\ i\in \IN{d}\}$ has the desired properties for $\tilde S_0$ and 
$\tilde S_1$ then this ONB also has the desired properties for $S_0$ and $S_1\,$.
\QED

\fontsize {8}{9}\selectfont

\end{document}